\newcommand{\alphastarrho}{{}_{\alpha^*}\!\rho}
\theoremstyle{plain}
\newtheorem{thm}{Theorem}[section]
\theoremstyle{definition}
\newtheorem{defn}[thm]{Definition}
\newtheorem{rem}[thm]{Remark}
\crefname{defn}{Definition}{Definitions}
\Crefname{defn}{Definition}{Definitions}
\crefname{thm}{Theorem}{Theorems}
\Crefname{thm}{Theorem}{Theorems}
\crefname{lem}{Lemma}{Lemmas}
\Crefname{lem}{Lemma}{Lemmas}
\crefname{rem}{Remark}{Remarks}
\Crefname{rem}{Remark}{Remarks}
\crefname{prop}{Proposition}{Propositions}
\Crefname{prop}{Proposition}{Propositions}
\crefname{cor}{Corollary}{Corollaries}
\Crefname{cor}{Corollary}{Corollaries}
\crefname{section}{Section}{Sections}
\Crefname{section}{Section}{Sections}
\crefname{equation}{}{}
\Crefname{equation}{}{}
\crefname{figure}{Figure}{Figures}
\Crefname{figure}{Figure}{Figures}
\crefname{appendix}{Appendix}{Appendices}
\Crefname{appendix}{Appendix}{Appendices}
\title{The $F$-Symbols for the $\mathcal{H}_3$ Fusion Category}
\author{Tobias J. Osborne}
\email{tobias.osborne@itp.uni-hannover.de}
\author{Deniz E. Stiegemann}
\email{deniz@stiegemann.com}
\author{Ramona Wolf}
\email{ramona.wolf@itp.uni-hannover.de}
\address{Institut für theoretische Physik, Leibniz Universität Hannover, Appelstraße 2, 30167 Hannover, Germany}
\begin{document}

\begin{abstract}
We present a solution for the $F$-symbols of the $\mathcal{H}_3$ fusion category, which is Morita equivalent to the even parts of the Haagerup subfactor. This solution has been computed by solving the pentagon equations and using several properties of trivalent categories.
\end{abstract}

\maketitle



\section{Introduction}

Subfactors \cite{jonesIntroductionSubfactors1997} play an increasingly interesting role in mathematics and physics. Particularly striking are applications in physics: here mathematical investigations into the theory of quantum fields have led to a conjectured  correspondence between subfactors and conformal field theories (CFTs) \cite{Jones2010VonNA}. This conjecture builds on original work of Doplicher \cite{doplicherNewDualityTheory1989} and was later bolstered by Bischoff \cite{bischoffRelationSubfactorsArising2015,bischoffRemarkCFTRealization2016}. A considerable body of evidence for the conjecture has now been collected (see, e.g., \cite{xuExamplesSubfactorsConformal2018,calegariCyclotomicIntegersFusion2011}). There are, however, gaps: there is a set of exceptional subfactors with no known counterpart CFT. The Haagerup subfactor \cite{haagerupPrincipal1993,AH99}, which is the smallest (finite-depth, irreducible, hyperfinite) subfactor with index more than 4, is arguably the first example that will likely require new techniques to build a corresponding CFT. Interestingly, it is possible to indirectly reason about a hypothetical Haagerup CFT, and quite a lot is already known about its potential properties \cite{evansExoticnessRealisabilityTwisted2011}.

Associated with the even parts of the Haagerup subfactor are two unitary fusion categories, here denoted $\mathcal{H}_1$ and $\mathcal{H}_2$. There is a third fusion category which is Morita equivalent to $\mathcal{H}_1$ and $\mathcal{H}_2$ but not isomorphic to either of them \cite{GS12}. This fusion category, which is denoted $\mathcal{H}_3$, has six simple objects and is of particular interest for this paper as it admits a simple skein theory. Owing to its comparative simplicity of formulation, the resulting \emph{trivalent category} \cite{MPS17} is especially attractive as a basis for building CFT models. 

An important quantity for any unitary fusion category, required for any deeper investigation of physical models, are its $F$- or $6j$-symbols, determined by solving its \emph{pentagon equations}. The $F$-symbols are notoriously difficult to obtain in general and explicit solutions are known only for a handful of cases (see, e.g., \cite{Bonderson} for list of many explicit solutions). Here the $\mathcal{H}_3$ fusion category presents no shortage of challenges: after eliminating trivial equations there are 41391 equations determining 1431 unknowns, a task at the limit of what is easily computable with current state-of-the-art technology and algorithms. 

It is worth emphasising that the $F$-symbols for $\mathcal{H}_3$ have actually already, in principle, been determined by Izumi \cite{Izumi}, who exploited Cuntz-algebra techniques to describe them. Our goal in this paper, however, is to develop methods admitting an easy generalisation to other unitary fusion categories. In particular, we exploit skein-theoretic information to provide ``seed'' data for the subsequent application of standard equation solvers. This approach promises to generalise to a variety of interesting cases, possibly including the extended Haagerup subfactors \cite{EHplanaralg,EHfusioncats}. 

The outline of this paper is as follows. In \cref{sec:2_fusioncat} we review the definition of the $\mathcal{H}_3$ fusion category. This is followed, in \cref{sec:3_result}, by a brief statement of our main result. The trivalent category methods we exploited are described in \cref{sec:5_methods}. The simplifications allowed by fixing a gauge are described in \cref{sec:6_gauge} (we work in a gauge where the solutions are real). Finally, we conclude in \cref{sec:7_conclusion}. The diagrammatic calculus of trivalent categories is reviewed in \cref{app:TriCats} and the explicit solution for the $F$-symbols written out in \cref{app:Fsymbols}. 

While completing this work we learnt that Matthew Titsworth \cite{Priv} has also previously obtained the solution to the pentagon equations for $\mathcal{H}_3$, and further determined the monoidal classes of the solutions according to the methods of \cite{haggeGeometricInvariantsFusion2015}.


\section{The $\mathcal{H}_3$ fusion category}
	\label{sec:2_fusioncat}
	The category we study in this work is a fusion category, which is defined as follows:
		\begin{defn}
			A \textbf{fusion category} over $\mathbb{C}$ is a $\mathbb{C}$-linear rigid semisimple monoidal category with finitely many simple objects (up to isomorphism) and finite-dimensional morphism spaces, such that the identity object is simple.
		\end{defn}
	For more details on fusion category, see for example \cite{ENO05}.
	The category we are considering here is the $\mathcal{H}_3$ fusion category that was constructed in~\cite{GS12}, which is Morita equivalent to the even parts of the Haagerup subfactor, which was introduced in \cite{AH99}. Its set of simple objects is given by
		\begin{equation}
			\mathcal{M}_{\mathcal{H}_3}=\{1,\alpha,\alpha^*,\rho,{}_\alpha\rho,\alphastarrho\}
		\end{equation}
	with the dimensions $d_1=d_\alpha=d_{\alpha^*}=1$, $d_\rho=d_{{}_\alpha\rho}=d_{\alphastarrho}=\frac{3+\sqrt{13}}{2}$.
	The fusion rules for this category are listed in \cref{fig:fusiontable}.
	
	\begin{figure}[H]
		\centering
		\begin{tabular}{c||c|c|c|c|c|c}
			& $1$ & $\alpha$ & $\alpha^*$ & $\rho$ & ${}_\alpha
			\rho$ & $\alphastarrho$ \\ \hline \hline
			$1$ & $1$ & $\alpha$ & $\alpha^*$ & $\rho$ & ${}_\alpha \rho$ & $\alphastarrho$ \\ \hline
			$\alpha$ & $\alpha$ & $\alpha^*$ & $1$ & ${}_\alpha \rho$ & $\alphastarrho$ & $\rho$ \\ \hline
			$\alpha^*$ & $\alpha^*$ & $1$ & $\alpha$ & $\alphastarrho$ & $\rho$ & ${}_\alpha \rho$ \\ \hline
			$\rho$ & $\rho$ & $\alphastarrho$ & ${}_\alpha \rho$ & $1+ \rho+ {}_\alpha\rho+\alphastarrho$ & $\alpha^*+ \rho+ {}_\alpha\rho+ \alphastarrho$ & $\alpha+ \rho+ {}_\alpha\rho+\alphastarrho$ \\ \hline
			${}_\alpha\rho$ & ${}_\alpha\rho$ & $\rho$ & $\alphastarrho$ & $\alpha+ \rho+ {}_\alpha\rho+ \alphastarrho$ & $1+ \rho+ {}_\alpha\rho+ \alphastarrho$ & $\alpha^*+ \rho+ {}_\alpha\rho+ \alphastarrho$ \\ \hline
			$\alphastarrho$ & $\alphastarrho$ & ${}_\alpha\rho$ & $\rho$ & $\alpha^*+ \rho+ {}_\alpha\rho+ \alphastarrho$ & $\alpha+ \rho+ {}_\alpha\rho+ \alphastarrho$ & $1+ \rho+ {}_\alpha\rho+ \alphastarrho$
		\end{tabular}
		\caption{Fusion rules for the $\mathcal{H}_3$ fusion category.}
		\label{fig:fusiontable}
	\end{figure}
	Throughout the paper, we will denote the vector space that corresponds to the fusion of two objects $a$ and $b$ to an object $c$, i.e.\ the morphism space $\hom(c,a\otimes b)$ by $V_c^{ab}$ with $\dim V_c^{ab}=N_c^{ab}$.
	Additionally, we know that there is a trivalent category that corresponds to $\mathcal{H}_3$ (see~\cite{MPS17}). It is the full subcategory of $\mathcal{H}_3$ whose objects are $\rho$ and its tensor powers. The Karoubi envelope (i.e., the idempotent completion) of this trivalent category is the original fusion category $\mathcal{H}_3$.
	
	In the following, we will need some properties of monoidal categories. A \emph{pivotal} category $\mathcal{C}$ is a rigid monoidal category (i.e.\ every object has a dual that fulfils certain conditions) such that $x^{**}=x$ for every $x\in\mathrm{obj}(\mathcal{C})$. A category is \emph{evaluable} if $\dim\hom(1,1)=1$ and it is said to be \emph{non-degenerate} if for every morphism $f\in\hom(a,b)$ there is a morphism $f'\in\hom(b,a)$ such that $\mathrm{tr}(f\circ f')\neq 0$ in $\hom(1,1)$. For a pivotal category $\mathcal{C}$ and a fixed object $X$, $\mathcal{C}_n$ denotes the morphism space $\hom(1,X^{\otimes n})$. 
		\begin{defn}
			A \textbf{trivalent category} $(\mathcal{C}, X, \tau)$ is a non-degenerate evaluable pivotal category over $\mathbb{C}$ with an object $X$ with $\dim\mathcal{C}_1=0$, $\dim\mathcal{C}_2=1$, and $\dim\mathcal{C}_3=1$, with a rotationally invariant morphism $\tau\in\mathcal{C}_3$ called the trivalent vertex, such that the category is generated (as a pivotal category) by $\tau$.
		\end{defn}
	
		\begin{rem}
			The object $X$ is symmetrically self-dual (see \cite{MPS17}, Lemma 2.2), which implies that we can drop the orientations on string diagrams. In summary, every unoriented planar trivalent graph with $n$ boundary points can be interpreted as an element of $\mathcal{C}_n$. This provides some useful simplifications in the diagrammatic calculus of fusion categories (see \cref{app:A_DiagramCalc} for more details), which we exploit in \cref{sec_additionaleqs,sec:MethodsTriv}.
		\end{rem}
	
		\begin{rem}
			For $\mathcal{H}_3$, the chosen object $X$ is $\rho$, since it generates all the other simple objects in the category.
		\end{rem}


\section{Result}
\label{sec:3_result}

The main result of this paper is summarised in the following theorem:

\begin{thm}
	The pentagon equations can be solved by the $F$-symbols given in \cref{app:Fsymbols}, which is a real solution with two parameters $p_1,p_2\in\{-1,+1\}$.
\end{thm}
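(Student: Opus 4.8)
The statement is a verification-type result: I must exhibit a concrete assignment of $F$-symbols and confirm that it satisfies every pentagon equation. Because the pentagon system for $\mathcal{H}_3$ is enormous---41391 nontrivial equations in 1431 unknowns---a frontal assault is hopeless, so the plan is to compress the problem using the categorical structure, extract seed values from the trivalent/skein calculus, solve the resulting reduced system, and only then verify the full system by back-substitution. The residual two-sign freedom and the reality of the solution are read off at the end.

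First I would enumerate the pentagon equations directly from the fusion rules of \cref{fig:fusiontable}. Each admissible tuple of simple objects with compatible internal channels contributes one equation of the schematic form
\[\bigl[F^{fcd}_{e}\bigr]_{gl}\,\bigl[F^{abl}_{e}\bigr]_{fk}=\sum_{h}\bigl[F^{abc}_{g}\bigr]_{fh}\,\bigl[F^{ahd}_{e}\bigr]_{gk}\,\bigl[F^{bcd}_{k}\bigr]_{hl},\]
quadratic on the left and cubic (with an internal sum) on the right. The invertible objects close under fusion into a copy of $\mathbb{Z}/3$ (from $\alpha\otimes\alpha=\alpha^*$ and $\alpha\otimes\alpha^*=1$), so the $F$-symbols carrying only the labels $1,\alpha,\alpha^*$ are phases fixed by the associator of $\mathbb{Z}/3$; discarding these together with the identically-satisfied equations leaves the stated count.

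Next I would remove the gauge freedom. Each nonzero fusion space $V_c^{ab}$ may be rescaled independently, and since $\mathcal{H}_3$ is multiplicity-free ($N_c^{ab}\in\{0,1\}$ throughout) the gauge group acts simply by nonzero scalars. Following \cref{sec:6_gauge}, I would fix a gauge in which all $F$-symbols are real, both shrinking the unknown count and simplifying the polynomials. The crux is then to supply seed values from the trivalent structure: the full subcategory generated by $\rho$ is a trivalent category, so $\dim\mathcal{C}_1=0$ and $\dim\mathcal{C}_2=\dim\mathcal{C}_3=1$, and every unoriented planar trivalent graph evaluates to a scalar. Evaluating the distinguished networks---the bigon and theta coefficients, the tetrahedral value, and the local reductions forced on the trivalent vertex $\tau\in\mathcal{C}_3$---pins down enough $F$-symbols involving $\rho$ to initialise the solver and makes $d_\rho=(3+\sqrt{13})/2$ appear consistently (\cref{sec:5_methods}).

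Finally I would feed the seed data into a polynomial-system solver. After gauge fixing and substitution the remaining pentagon equations form a finite system whose solution set I expect to be discrete; solving it---by Gröbner-basis elimination over $\mathbb{Q}(\sqrt{13})$, or by high-precision numerics followed by recognition of the algebraic entries---should reproduce the table of \cref{app:Fsymbols}, the only undetermined freedom being two independent signs $p_1,p_2\in\{\pm1\}$ on which no equation bears. The hardest and most delicate step will be the final verification: one must confirm that the recovered values satisfy all $41391$ equations exactly---for each of the four sign choices, not merely the subset used to derive them---and that every entry is genuinely real. I expect this exhaustive back-substitution over the field $\mathbb{Q}(\sqrt{13})$, rather than the derivation, to be the true computational obstacle.
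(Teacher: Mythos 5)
Your proposal matches the paper's own strategy essentially point for point: the paper likewise exploits the trivalent/skein structure (bigon evaluation, the square-popping relation, the triangle reduction constant $t$) to generate seed values, fixes the gauge as in \cref{sec:6_gauge} to obtain a real solution, hands the reduced system to standard equation solvers, and verifies the resulting table of \cref{app:Fsymbols} (with the residual signs $p_1,p_2$) by back-substitution, with the computation published as code. The approach is correct and is the same as the paper's.
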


All occurring values are in the interval $[-1,+1]$, so we can visualise this solution as follows: We represent the value $+1$ with a black pixel and the value $-1$ with a white pixel. The values in between are represented by a green pixel whose darkness depends on where the value lies in the interval $[-1,+1]$, e.g.\ a value close to $+1$ is represented by a very dark green pixel, a value close to $-1$ by a very light green pixel. For $p_1=p_2=+1$, this is shown in \cref{fig:pixels}, where the order of the $F$-symbols is chosen randomly.

\begin{figure}[H]
	\includegraphics[width=0.65\textwidth]{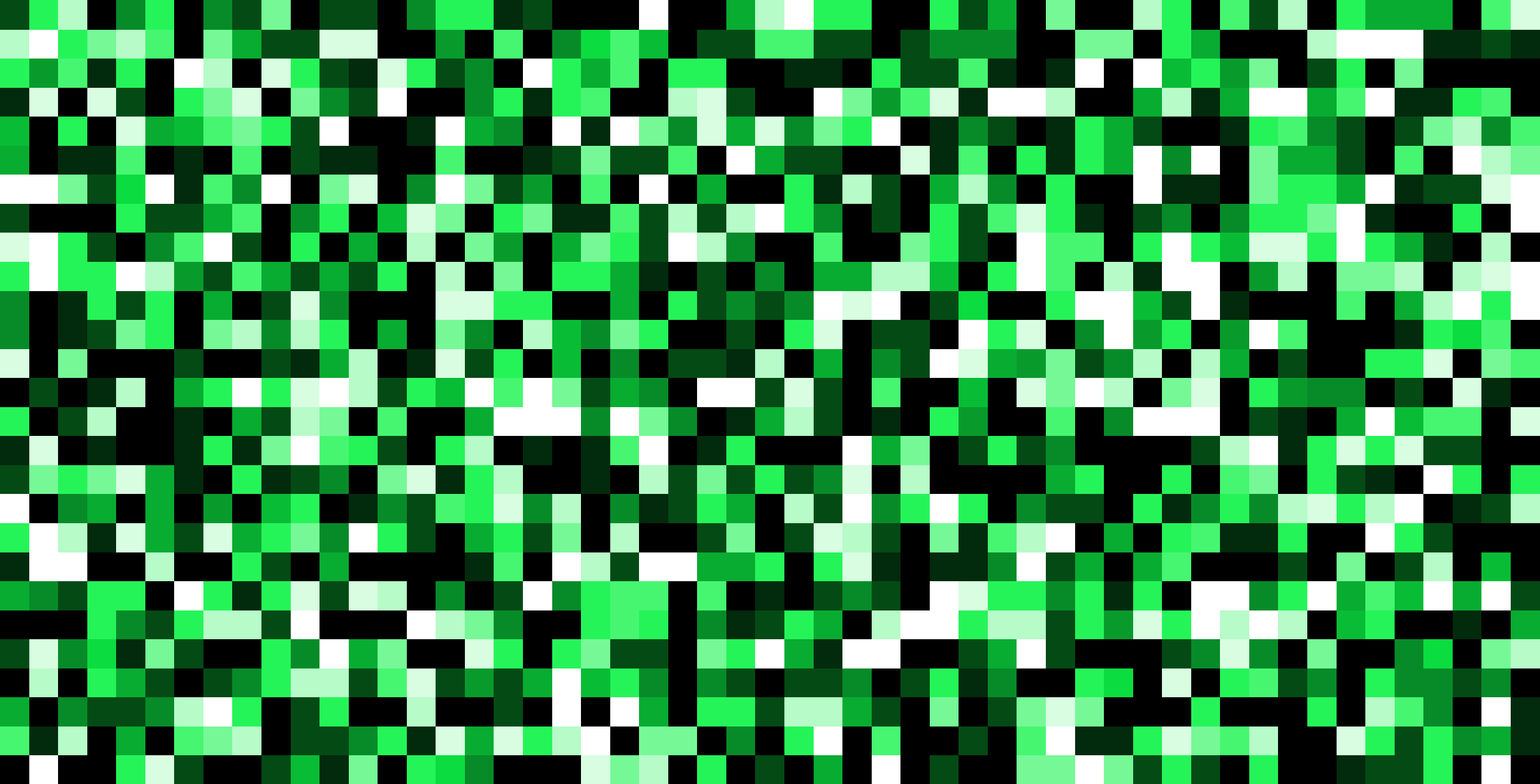}
	\caption{A visualisation of the $F$-symbols for $p_1=p_2=+1$.}
	\label{fig:pixels}
\end{figure}

\section{The pentagon equations}		

In this section, we explain the \emph{pentagon equations}, which are the fundamental equations we have to solve to obtain a solution for the $F$-symbols.
$F$-symbols are unitary isomorphisms between different parenthesis structures of the tensor product of multiple objects, i.e.\ they are maps
	\begin{equation}
		F_u^{xyz}: \hom(u,(x\otimes y)\otimes z)\rightarrow \hom(u,x\otimes (y\otimes z)).
	\end{equation}
In terms of fusion diagrams, this is depicted as
	\begin{equation}
		\begin{tikzpicture}[baseline={([yshift=-.5ex]current bounding box.center)},scale=0.4]
			\node at (-5,4.6) {$u$};
			\node at (-7,8.4) {$x$};
			\node at (-5,8.4) {$y$};
			\node at (-3,8.4) {$z$};
			\draw (-5,5) -- (-5,6);
			\draw (-5,6) -- (-6,7) node[below] {$k\ $};
			\draw (-5,6) -- (-4,7);
			\draw (-6,7) -- (-7,8);
			\draw (-6,7) -- (-5,8);
			\draw (-4,7) -- (-3,8);
		\end{tikzpicture}
		=\sum_j \left(F_u^{xyz}\right)_{jk}
		\begin{tikzpicture}[baseline={([yshift=-.5ex]current bounding box.center)},scale=0.4]
			\node at (5,4.6) {$u$};
			\node at (7,8.4) {$z$};
			\node at (5,8.4) {$y$};
			\node at (3,8.4) {$x$};
			\draw (5,5) -- (5,6);
			\draw (5,6) -- (6,7) node[below] {$\ j$};
			\draw (5,6) -- (4,7);
			\draw (6,7) -- (7,8);
			\draw (6,7) -- (5,8);
			\draw (4,7) -- (3,8);
		\end{tikzpicture}
	\end{equation}
They have to fulfil a set of conditions called the \emph{pentagon equations}, which means that the diagram in \cref{fig:pentagon} has to commute.
	\begin{figure}[H]
		\begin{tikzpicture}[scale=0.65];	
			\node (u) at (-1,-0.25) {$u$};
			\node (x) at (-2.5,2.2) {$x$};
			\node (y) at (-1.5,2.2) {$y$};
			\node (z) at (-0.5,2.2) {$z$};
			\node (w) at (0.5,2.2) {$w$};
			\node (a) at (-1.9,1.1) {$a$};
			\node (b) at (-1.4,0.6) {$b$};
			\draw (u) -- (-1,0.5);
			\draw (-1,0.5) -- (-1.5,1);
			\draw (-1.5,1) -- (-2,1.5);
			\draw (-2,1.5) -- (-2.5,2);
			\draw (-1,0.5) -- (-0.5,1);
			\draw (-0.5,1) -- (0,1.5);
			\draw (0,1.5) -- (0.5,2);
			\draw (-2,1.5) -- (-1.5,2);
			\draw (-1.5,1) -- (-1,1.5);
			\draw (-1,1.5) --(-0.5,2);
			\draw[->] (0.2,2.9) to node[above] {$F_u^{azw}$} (2.9,4.1);
			\node (u2) at (5,2.75) {$u$};
			\node (x2) at (3.5,5.2) {$x$};
			\node (y2) at (4.5,5.2) {$y$};
			\node (z2) at (5.5,5.2) {$z$};
			\node (w2) at (6.5,5.2) {$w$};
			\node (a2) at (4.1,4.1) {$a$};
			\node (c2) at (5.9,4.1) {$c$};
			\draw (u2) -- (5,3.5);
			\draw (5,3.5) -- (4.5,4);
			\draw (4.5,4) -- (4,4.5);
			\draw (4,4.5) -- (3.5,5);
			\draw (5,3.5) -- (5.5,4);
			\draw (5.5,4) -- (6,4.5);
			\draw (6,4.5) -- (6.5,5);
			\draw (4,4.5) -- (4.5,5);
			\draw (6,4.5) -- (5.5,5);
			\draw[->] (7.1,4.1) to node[above]{$F_u^{xyc}$} (9.8,2.9);
			\node (u3) at (11,-0.25) {$u$};
			\node (x3) at (9.5,2.2) {$x$};
			\node (y3) at (10.5,2.2) {$y$};
			\node (z3) at (11.5,2.2) {$z$};
			\node (w3) at (12.5,2.2) {$w$};
			\node (d3) at (11.4,0.6) {$d$};
			\node (c3) at (11.9,1.1) {$c$};
			\draw (u3) -- (11,0.5);
			\draw (11,0.5) -- (10.5,1);
			\draw (10.5,1) -- (10,1.5);
			\draw (10,1.5) -- (9.5,2);
			\draw (11,0.5) -- (11.5,1);
			\draw (11.5,1) -- (12,1.5);
			\draw (12,1.5) -- (12.5,2);
			\draw (11.5,1) -- (10.5,2);
			\draw (12,1.5) -- (11.5,2);
			\draw[->] (-1,-0.9) to node[above] {$\ \ \ \ F_b^{xyz}$} (0,-2.1);
			\node (u4) at (1.5,-5.25) {$u$};
			\node (x4) at (0,-2.8) {$x$};
			\node (y4) at (1,-2.8) {$y$};
			\node (z4) at (2,-2.8) {$z$};
			\node (w4) at (3,-2.8) {$w$};
			\node (e4) at (1.4,-3.9) {$e$};
			\node (b4) at (1.1,-4.4) {$b$};
			\draw (u4) -- (1.5,-4.5);
			\draw (1.5,-4.5) -- (1,-4);
			\draw (1,-4) -- (0.5,-3.5);
			\draw (0.5,-3.5) -- (0,-3);
			\draw (1.5,-4.5) -- (2,-4);
			\draw (2,-4) -- (2.5,-3.5);
			\draw (2.5,-3.5) -- (3,-3);
			\draw (1.5,-3.5) -- (1,-3);
			\draw (1,-4) -- (1.5,-3.5);
			\draw (1.5,-3.5) --(2,-3);
			\draw[->] (3.5,-4) to node[above] {$F_u^{xew}$} (6.5,-4);
			\node (u5) at (8.5,-5.25) {$u$};
			\node (x5) at (7,-2.8) {$x$};
			\node (y5) at (8,-2.8) {$y$};
			\node (z5) at (9,-2.8) {$z$};
			\node (w5) at (10,-2.8) {$w$};
			\node (e5) at (8.6,-3.9) {$e$};
			\node (d5) at (8.9,-4.4) {$d$};
			\draw (u5) -- (8.5,-4.5);
			\draw (8.5,-4.5) -- (8,-4);
			\draw (8,-4) -- (7.5,-3.5);
			\draw (7.5,-3.5) -- (7,-3);
			\draw (8.5,-4.5) -- (9,-4);
			\draw (9,-4) -- (9.5,-3.5);
			\draw (9.5,-3.5) -- (10,-3);
			\draw (8.5,-3.5) -- (8,-3);
			\draw (9,-4) -- (8.5,-3.5);
			\draw (8.5,-3.5) --(9,-3);
			\draw[->] (10,-2.1) to node[above] {$F_d^{yzw}\ \ \ $} (11,-0.9);
		\end{tikzpicture}
		\caption{The pentagon equations.}
		\label{fig:pentagon}
	\end{figure}
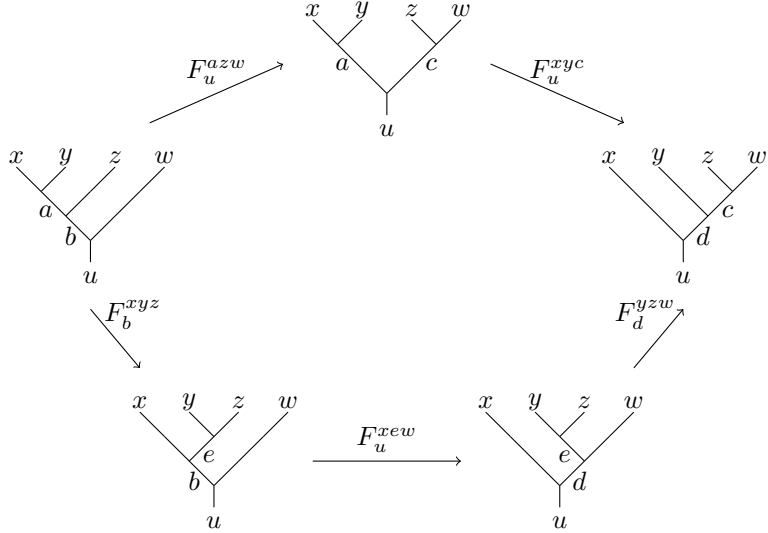
\noindent
In terms of matrix elements, this condition is expressed as
	\begin{equation}
		\label{eq:pentagon}
		\left(F_u^{xyc}\right)_{da}\left(F_u^{azw}\right)_{cb}=\sum_e\left(F_d^{yzw}\right)_{ce}\left(F_u^{xew}\right)_{db}\left(F_b^{xyz}\right)_{ea}.
	\end{equation}
Hence, to obtain the $F$-symbols for a given fusion category, we have to solve \cref{eq:pentagon}.

\section{Methods}
\label{sec:5_methods}

Beside the pentagon equations, there are some additional equations and techniques that we can use to make finding the solution easier. Some of them use the fact that the category is trivalent.	

\subsection{The triangle equation}

Since a fusion category $\mathcal{C}$ is a tensor category, we have a left and a right unit constraint: For any object $x\in\mathcal{C}$, there exist maps
	\begin{align}
		l_x&:\hom(x,1\otimes x)\to \hom(x,x)\\
		r_x&:\hom(x,x\otimes 1)\to \hom(x,x).
	\end{align}
\noindent
To ensure that these maps are compatible with the $F$-symbols, the \emph{triangle equation} has to be fulfilled, which means that the diagram in \cref{fig:triangle_eq} commutes.

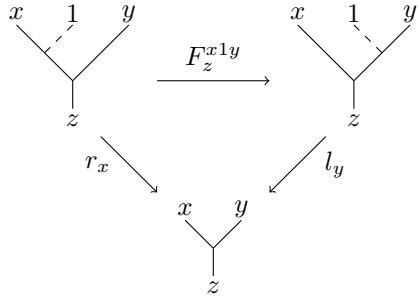
\begin{figure}[H]
	\centering
	\begin{tikzpicture}[scale=0.37]
		\node at (0,-1.4) {$z$};
		\node at (-1,1.4) {$x$};
		\node at (1,1.4) {$y$};
		\draw (0,-1) -- (0,0);
		\draw (0,0) -- (-1,1);
		\draw (0,0) -- (1,1);
		\node at (-5,4.6) {$z$};
		\node at (-7,8.4) {$x$};
		\node at (-5,8.4) {$1$};
		\node at (-3,8.4) {$y$};
		\draw (-5,5) -- (-5,6);
		\draw (-5,6) -- (-6,7);
		\draw (-5,6) -- (-4,7);
		\draw (-6,7) -- (-7,8);
		\draw[dashed] (-6,7) -- (-5,8);
		\draw (-4,7) -- (-3,8);
		\node at (5,4.6) {$z$};
		\node at (7,8.4) {$y$};
		\node at (5,8.4) {$1$};
		\node at (3,8.4) {$x$};
		\draw (5,5) -- (5,6);
		\draw (5,6) -- (6,7);
		\draw (5,6) -- (4,7);
		\draw (6,7) -- (7,8);
		\draw[dashed] (6,7) -- (5,8);
		\draw (4,7) -- (3,8);
		\draw[->] (-2,6) -- node[above] {$F_z^{x1y}$} (2,6);
		\draw[->] (-4,4) -- node[left] {$r_x\ $} (-2,2);
		\draw[->] (4,4) -- node[right] {$\ \ l_y$} (2,2);
	\end{tikzpicture}
	\caption{Diagrammatic depiction of the triangle equation.}
	\label{fig:triangle_eq}
\end{figure}

\noindent
There are some modifications of the triangle equation, which also have to be fulfilled:
\begin{figure}[H]
	\centering
	\begin{subfigure}[b]{0.45\textwidth}
		\centering
		\begin{tikzpicture}[scale=0.37]
			\node at (0,-1.4) {$z$};
			\node at (-1,1.4) {$x$};
			\node at (1,1.4) {$y$};
			\draw (0,-1) -- (0,0);
			\draw (0,0) -- (-1,1);
			\draw (0,0) -- (1,1);
			\node at (-5,4.6) {$z$};
			\node at (-7,8.4) {$1$};
			\node at (-5,8.4) {$x$};
			\node at (-3,8.4) {$y$};
			\draw (-5,5) -- (-5,6);
			\draw (-5,6) -- (-6,7);
			\draw (-5,6) -- (-4,7);
			\draw[dashed] (-6,7) -- (-7,8);
			\draw (-6,7) -- (-5,8);
			\draw (-4,7) -- (-3,8);
			\node at (5,4.6) {$z$};
			\node at (7,8.4) {$y$};
			\node at (5,8.4) {$x$};
			\node at (3,8.4) {$1$};
			\draw (5,5) -- (5,6);
			\draw (5,6) -- (6,7);
			\draw[dashed] (5,6) -- (4,7);
			\draw (6,7) -- (7,8);
			\draw (6,7) -- (5,8);
			\draw[dashed] (4,7) -- (3,8);
			\draw[->] (-2,6) -- node[above] {$F_z^{1xy}$} (2,6);
			\draw[->] (-4,4) -- node[left] {$l_x\ $} (-2,2);
			\draw[->] (4,4) -- node[right] {$\ \ l_z$} (2,2);
		\end{tikzpicture}
		\caption{Case 1.}
		\label{subfig:Trianglea}
	\end{subfigure}
	\hfill
	\begin{subfigure}[b]{0.45\textwidth}
		\centering
		\begin{tikzpicture}[scale=0.37]
			\node at (0,-1.4) {$z$};
			\node at (-1,1.4) {$x$};
			\node at (1,1.4) {$y$};
			\draw (0,-1) -- (0,0);
			\draw (0,0) -- (-1,1);
			\draw (0,0) -- (1,1);
			\node at (-5,4.6) {$z$};
			\node at (-7,8.4) {$x$};
			\node at (-5,8.4) {$y$};
			\node at (-3,8.4) {$1$};
			\draw (-5,5) -- (-5,6);
			\draw (-5,6) -- (-6,7);
			\draw[dashed] (-5,6) -- (-4,7);
			\draw (-6,7) -- (-7,8);
			\draw (-6,7) -- (-5,8);
			\draw[dashed] (-4,7) -- (-3,8);
			\node at (5,4.6) {$z$};
			\node at (7,8.4) {$1$};
			\node at (5,8.4) {$y$};
			\node at (3,8.4) {$x$};
			\draw (5,5) -- (5,6);
			\draw (5,6) -- (6,7);
			\draw (5,6) -- (4,7);
			\draw[dashed] (6,7) -- (7,8);
			\draw (6,7) -- (5,8);
			\draw (4,7) -- (3,8);
			\draw[->] (-2,6) -- node[above] {$F_z^{xy1}$} (2,6);
			\draw[->] (-4,4) -- node[left] {$r_z\ $} (-2,2);
			\draw[->] (4,4) -- node[right] {$\ \ r_y$} (2,2);
		\end{tikzpicture}
		\caption{Case 2.}
		\label{subfig:Triangleb}
	\end{subfigure}
	\caption{Variations of the triangle equation.}
	\label{fig:TriangleCases}
\end{figure}
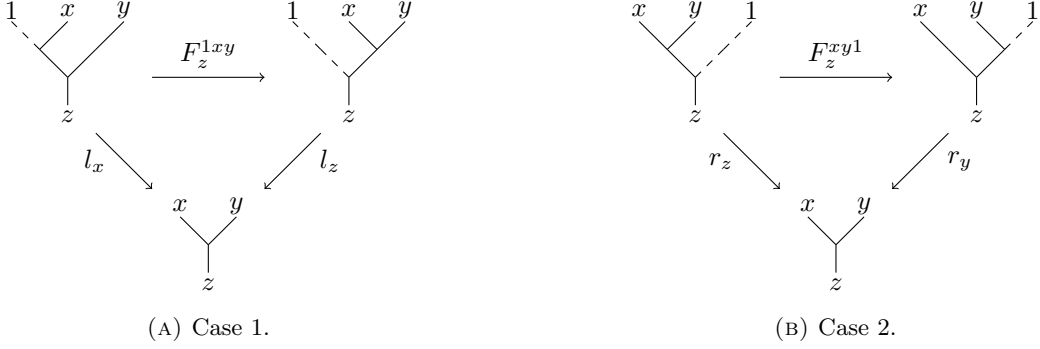

\noindent
This leads to the following observation:
\begin{thm}
	\label{thm:triangle}
	In a tensor category $\mathcal{C}$, the following equations hold whenever the corresponding vector spaces are one-dimensional:
	\begin{align}
	\begin{split}
	\label{eq:triangle}
	F^{1xy}_z F^{1zy}_x&=1,\\
	F^{xy1}_z F^{xz1}_y&=1.
	\end{split}
	\end{align}
\end{thm}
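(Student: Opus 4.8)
The plan is to extract the two claimed identities directly from the commuting triangle diagrams, reading them as scalar equations once the relevant morphism spaces are one-dimensional. Consider first the main triangle equation of \cref{fig:triangle_eq}, which says that inserting the identity object $1$ in the middle position and applying $F_z^{x1y}$ agrees with the composition $l_y \circ r_x$ of unit constraints. The key point is that whenever the fusion spaces are one-dimensional, every $F$-symbol and every unit constraint appearing here is a $1\times 1$ matrix, i.e.\ an honest complex number, so the commuting diagram collapses to a scalar equation. I would first record that in a pivotal (hence tensor) category the unit constraints $l_x$ and $r_x$ are \emph{canonically trivial} in the standard gauge, so that the triangle equation reduces to the statement that $F_z^{x1y}=1$ in the one-dimensional case.

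Next I would obtain the two equations of \cref{eq:triangle} by composing a triangle move with its reverse. For the first identity, the strategy is to apply $F^{1xy}_z$ from the diagram in \cref{subfig:Trianglea} and then apply a second move that returns to the original bracketing; because $F$ is an isomorphism (indeed unitary), the composite of the forward and backward relabelled $F$-symbols must be the identity. Concretely, going around using $F^{1xy}_z$ and then the companion move $F^{1zy}_x$ brings the diagram back to where it started, and since each is a scalar in the one-dimensional setting, their product equals $1$, giving $F^{1xy}_z F^{1zy}_x=1$. The second identity $F^{xy1}_z F^{xz1}_y=1$ follows identically from the right-handed variant in \cref{subfig:Triangleb}, with the roles of $l$ and $r$ interchanged.

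The cleaner and more robust way to see both identities, which I would use as the actual argument, is to derive them as special cases of the pentagon equation \cref{eq:pentagon} with one of the four tensor factors set equal to the unit object $1$. Setting, say, $y=1$ in the pentagon and using that fusion with $1$ is trivial (so the only allowed internal labels are forced and the corresponding $F$-symbols with a $1$-leg reduce to unit-constraint scalars), the five-term pentagon relation degenerates into a relation among triangle-type $F$-symbols; after cancelling the constraint factors this is exactly one of the equations in \cref{eq:triangle}. I would carry this out for the two relevant placements of $1$ (middle-left and middle-right), each yielding one of the two lines.

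The main obstacle will be bookkeeping rather than conceptual: I must verify that all the intermediate fusion spaces that appear when a leg is set to $1$ are genuinely one-dimensional (so that the matrix $F$-symbols really are scalars and may be freely multiplied and inverted), and I must keep careful track of which unit-constraint scalars $l$ and $r$ cancel against which in the degenerate pentagon. Fixing the standard gauge in which the unit constraints are trivial removes the second difficulty, reducing the whole argument to the unitarity (invertibility) of the triangle $F$-symbols together with the observation that a round trip of relabelled moves is the identity.
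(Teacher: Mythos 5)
Your proposal has the right starting point---read the commuting triangles as scalar equations when the spaces are one-dimensional---but both of the arguments you actually offer have genuine gaps. First, the ``round trip'' claim is not valid as stated: $F_z^{1xy}$ is a map $\hom(z,(1\otimes x)\otimes y)\to\hom(z,1\otimes(x\otimes y))$, while $F_x^{1zy}$ acts between hom-spaces out of the \emph{different} object $x$, so the second move is not the inverse of the first and their composite is not literally ``the identity on the original diagram.'' The two scalars live in different triangles, and the only thing connecting them is the unit constraints. Your patch---declaring $l$ and $r$ ``canonically trivial in the standard gauge''---changes the statement: the theorem is asserted for a general tensor category in an arbitrary gauge, where $l_x$ and $l_z$ are scalars that need not equal $1$. (One could salvage this by first proving that the products in \cref{eq:triangle} are gauge invariant and that a gauge with trivial unit constraints exists, but you prove neither; the paper establishes gauge invariance only later and does not need it here.) The paper's proof closes the loop without any such assumption: \cref{subfig:Trianglea} gives $l_x = F_z^{1xy}\,l_z$, the same diagram with $x$ and $z$ interchanged gives $l_z = F_x^{1zy}\,l_x$, and substituting one into the other cancels the unknown scalars $l_x,l_z$, yielding $F_z^{1xy}F_x^{1zy}=1$; the second identity follows the same way from \cref{subfig:Triangleb} with the $r$'s.

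Second, the route you say you would actually use---specialising the pentagon equation \cref{eq:pentagon}---does not work as described. The pentagon contains no unit-constraint factors at all, so there is nothing to ``cancel''; the compatibility of $F$ with $l$ and $r$ is exactly the extra axiom (the triangle equation) that the pentagon does not encode. Setting a leg to $1$ in \cref{eq:pentagon} only produces conditional multiplicativity relations among unit-legged $F$-symbols, e.g.\ with $x=1$ one gets $\bigl(F_u^{1yc}\bigr)_{uy}\bigl(F_u^{yzw}\bigr)_{cb}=\bigl(F_u^{yzw}\bigr)_{cb}\bigl(F_u^{1bw}\bigr)_{ub}\bigl(F_b^{1yz}\bigr)_{by}$, which is informative only where the bulk entry $\bigl(F_u^{yzw}\bigr)_{cb}$ is nonzero, and even then it relates three unit $F$-symbols rather than giving the two-term product in \cref{eq:triangle}. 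Turning these relations into $F_z^{1xy}F_x^{1zy}=1$ requires non-vanishing hypotheses and auxiliary identities that are simply not available under the theorem's assumptions (which concern only the one-dimensionality of the specific spaces involved). So the pentagon-only derivation is a wrong approach for this statement; the triangle diagrams, used twice with the labels interchanged as in the paper, are both necessary and sufficient.
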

\begin{proof}
	Consider the diagram in \cref{subfig:Trianglea}. Since all the occurring maps are one-dimensional, we can treat them as complex numbers. Hence, the diagram yields the equation
		\begin{equation}
			l_x=F_z^{1xy}l_z.
			\label{Eq:Inverses1}
		\end{equation}
	Now consider a modified version of this diagram, namely the one where $x$ and $z$ are interchanged. This diagram leads to 
		\begin{equation}
			l_z=F_x^{1zy}l_x.
			\label{Eq:Inverses2}
		\end{equation}
	Inserting \cref{Eq:Inverses2} into \cref{Eq:Inverses1} yields
		\begin{align}
			l_x&=F_z^{1xy}F_x^{1zy}l_x\\
			1&=F_z^{1xy}F_x^{1zy}.
		\end{align}
	For the diagram in \cref{subfig:Triangleb} this is done analogously.
\end{proof}

\subsection{Additional equations}
\label{sec_additionaleqs}

	\begin{thm}
		\label{thm:addeq1}
		If $V_u^{abc}$ is a vector space with $\dim(V_u^{abc})\neq 0$ and $N_u^{x_3x_4}=N_{x_3}^{ax_1}=N_{x_4}^{x_2c}=N_{x_1x_2}^b=1$, the following equality holds for $x_1,x_2,x_3,x_4,x_4',y\in\{1,\alpha,\alpha^*,\rho,{}_\alpha\rho,\alphastarrho\}$:
			\begin{equation}
				\label{eq:addeq}
				 \sum_{x_3'}\big(F_u^{ax_1 x_4}\big)_{x_3' x_3}\ \big(F_{x_3'}^{x_1 x_2 c}\big)^*_{b x_4}\ \big(F_u^{abc}\big)^*_{y x_3'}=\big(F_u^{x_3 x_2 c}\big)^*_{y x_4}\ \big(F_{y}^{a x_1 x_2}\big)_{b x_3}.
			\end{equation}
	\end{thm}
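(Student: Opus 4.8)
The plan is to read \cref{eq:addeq} as a disguised form of the pentagon equation \cref{eq:pentagon} for the fourfold product $a\otimes x_1\otimes x_2\otimes c$, combined with the unitarity of the $F$-symbols. The four multiplicity hypotheses fix the intermediate channels as $x_3\in a\otimes x_1$, $b\in x_1\otimes x_2$, $x_4\in x_2\otimes c$, with $u$ the total charge, while $\dim V_u^{abc}\neq 0$ guarantees the configuration is non-empty; since each of these spaces is one-dimensional, the relevant $F$-moves are honest invertible matrices carrying no internal multiplicity sums. Recall also that the $F$-symbols are unitary, so every $F$-matrix obeys $F^{-1}=F^{\dagger}$; hence reversing an $F$-move replaces its entries by their complex conjugates, $(F^{-1})_{jk}=\overline{(F)_{kj}}$, and this is exactly the origin of the three starred factors in \cref{eq:addeq}.

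First I would substitute $(x,y,z,w)\mapsto(a,x_1,x_2,c)$ for the external objects and rename the internal edges $(a,b,c,d,e)\mapsto(x_3,y,x_4,x_3',b)$ of \cref{eq:pentagon}. A direct match of the five factors then turns \cref{eq:pentagon} into
\[
 (F_u^{a x_1 x_4})_{x_3' x_3}\,(F_u^{x_3 x_2 c})_{x_4 y}=\sum_b (F_{x_3'}^{x_1 x_2 c})_{x_4 b}\,(F_u^{abc})_{x_3' y}\,(F_y^{a x_1 x_2})_{b x_3}.
\]
This already contains precisely the five $F$-symbols appearing in \cref{eq:addeq}, but with the summation running over $b$ rather than $x_3'$ and with no conjugations.

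To reach \cref{eq:addeq}, I would view the five bracketings of $a\otimes x_1\otimes x_2\otimes c$ as the vertices of the associativity pentagon and each $F$-move as the corresponding unitary change-of-basis isomorphism of $\hom(u,a\otimes x_1\otimes x_2\otimes c)$. The displayed relation is the equality of the two composites joining $((a x_1)x_2)c$ to $a(x_1(x_2 c))$, whereas \cref{eq:addeq} is the equality of the two composites joining $(a x_1)(x_2 c)$ to $(a(x_1 x_2))c$. Passing between them is pure unitary bookkeeping: I would right-multiply the displayed relation by $(F_u^{x_3 x_2 c})^{-1}$ and left-multiply by $(F_u^{abc})^{-1}(F_{x_3'}^{x_1 x_2 c})^{-1}$, then collapse the resulting $F F^{\dagger}=\mathrm{id}$ products. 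This simultaneously replaces the sum over $b$ by a sum over $x_3'$ and conjugates exactly the three reversed factors $F_u^{x_3 x_2 c}$, $F_u^{abc}$ and $F_{x_3'}^{x_1 x_2 c}$, producing \cref{eq:addeq}.

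The conceptual content is therefore immediate once the pentagon and unitarity are in hand; the real work, and the step most likely to hide errors, is the index bookkeeping. Each $F$-move acts only on a sub-tree and leaves a spectator label fixed (for instance $F_{x_3'}^{x_1 x_2 c}$ fixes its root $x_3'$, which is precisely the label that ends up summed), so one must track these spectators carefully when writing the composites in components and verify that the contractions collapse exactly over $x_3'$ and place the three stars in the asserted positions. The one-dimensionality hypotheses are what make this clean: they ensure every intermediate space is multiplicity-free, so the unitarity relations $\sum_k (F)_{jk}(F)^*_{j'k}=\delta_{jj'}$ hold with no residual summation and the collapses above are exact.
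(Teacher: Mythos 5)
Your proposal is correct, but it takes a genuinely different route from the paper. The paper's proof is diagrammatic: it embeds the relevant vector into $V_u^{abc}$ by composing the tree with internal labels $(x_3,x_4)$ with the fusion vertex $x_1\otimes x_2\to b$, evaluates the resulting diagram in two different orders of $F$-moves, and uses the bigon rule, which produces factors $\sqrt{d_{x_1}d_{x_2}/d_b}\,\delta_{b,\cdot}$ that cancel when coefficients are compared. You instead stay entirely inside $\hom(u,a\otimes x_1\otimes x_2\otimes c)$ and derive \cref{eq:addeq} as a formal consequence of \cref{eq:pentagon} plus unitarity: writing $U_{ij}$ for the unitary relating the $i$-th and $j$-th parenthesizations, the pentagon is the equality of the two composites $U_{35}U_{13}=U_{45}U_{24}U_{12}$ from $((ax_1)x_2)c$ to $a(x_1(x_2c))$, and your multiplications by $\big(F_u^{x_3x_2c}\big)^{-1}$, $\big(F_{x_3'}^{x_1x_2c}\big)^{-1}$ and $\big(F_u^{abc}\big)^{-1}$ convert it into $U_{42}U_{54}U_{35}=U_{12}U_{31}$, the equality of the two composites from $(ax_1)(x_2c)$ to $(a(x_1x_2))c$; taking the matrix entry indexed by $\big((b,y),(x_3,x_4)\big)$, with the block/spectator structure you correctly describe, is exactly \cref{eq:addeq}, the three stars arising from $F^{-1}=F^{\dagger}$. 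The one identification worth stating explicitly---and it does hold---is that the pentagon factor $\big(F_u^{xew}\big)_{db}$ evaluated at the intermediate channel $e=b$ is literally the same matrix $F_u^{abc}$ appearing in \cref{eq:addeq}, since $b$ is itself a simple object; also note that in your displayed instance the symbols $y$ and $b$ initially label the $(ax_1)x_2$ and summed $x_1x_2$ channels, and only after the inversion do the surviving free indices acquire the meanings they have in \cref{eq:addeq}. As for what each approach buys: yours is shorter, needs no quantum dimensions or diagram evaluation, works verbatim in any multiplicity-free fusion category with invertible $F$-matrices, and makes transparent that \cref{eq:addeq} imposes no constraints beyond pentagon plus unitarity (its value for the solver is as a preprocessed shortcut, especially when one-dimensional factors kill the sum); the paper's diagrammatic derivation, by contrast, produces intermediate graphical-calculus relations that are reused later---notably in the proof of \cref{thm:Frrrr}, where the skein relation for the trivalent vertex is converted into the value of $\big(F_\rho^{\rho\rho\rho}\big)_{\rho\rho}$---so it interfaces directly with the skein-theoretic input that is the paper's main methodological point.
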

	\begin{proof}
		Consider the decomposition
			\begin{equation}
				V_u^{abc}=\bigoplus_{x_1,x_2,x_3,x_4}V_u^{x_3 x_4}\otimes V_{x_3}^{a x_1}\otimes V_{x_4}^{x_2 c}\otimes V_{x_1 x_2}^{b}.
			\end{equation}
		First, start by applying the $F$-matrix act on $\bigoplus_{x_3}V_u^{x_3 x_4}\otimes V_{x_3}^{a x_1}$:
		{\allowdisplaybreaks
			\begin{align}
				\begin{tikzpicture}[scale=.5,baseline=(current bounding box.center)]
					\draw (0,0) -- (0,1);
					\draw (0,1) -- (-1,2);
					\draw (0,1) -- (1,2);
					\draw (-1,2) -- (-1,3);
					\draw (1,2) -- (1,3);
					\draw (-1,3) -- (-2,4);
					\draw (-1,3) -- (0,4);
					\draw (1,3) -- (0,4);
					\draw (1,3) -- (2,4);
					\draw (-2,4) -- (-2,6);
					\draw (0,4) -- (0,6);
					\draw (2,4) -- (2,6);
					\node (u) at (0,-0.5) {$u$};
					\node (a) at (-2,6.5) {$a$};
					\node (b) at (0,6.5) {$b$};
					\node (c) at (2,6.5) {$c$};
					\node (x3) at (-1.5,1.75) {$x_3$};
					\node (x4) at (1.5,1.75) {$x_4$};
					\node (x1) at (-0.8,3.75) {$x_1$};
					\node (x2) at (0.8,3.75) {$x_2$};
				\end{tikzpicture}
				&=\sum_{x_3'} \big(F_u^{ax_1 x_4}\big)_{x_3' x_3}\ 
				\begin{tikzpicture}[scale=.5,baseline=(current bounding box.center)]
					\draw (0,0) -- (0,1);
					\draw (0,1) -- (-1,2);
					\draw (0,1) -- (1,2);
					\draw (1,2) -- (0,3);
					\draw (1,2) -- (2,3);
					\draw (0,3) -- (0,4);
					\draw (2,3) -- (2,4);
					\draw (0,4) -- (1,5);
					\draw (2,4) -- (1,5);
					\draw (2,4) -- (3,5);
					\draw (1,5) -- (1,6);
					\draw (3,5) -- (3,6);
					\draw (-1,2) -- (-1,6);
					\node (u) at (0,-0.5) {$u$};
					\node (a) at (-1,6.5) {$a$};
					\node (b) at (1,6.5) {$b$};
					\node (c) at (3,6.5) {$c$};
					\node (x3p) at (1,1.25) {$x_3'$};
					\node (x1) at (-0.4,3) {$x_1$};
					\node (x4) at (2.5,3) {$x_4$};
					\node (x2) at (1.25,4.25) {$x_2$};
				\end{tikzpicture}\\
				&=\sum_{x_3' x_4'}\big(F_u^{ax_1 x_4}\big)_{x_3' x_3}\ \big(F_{x_3'}^{x_1 x_2 c}\big)^*_{x_4' x_4}\ 
				\begin{tikzpicture}[scale=.5,baseline=(current bounding box.center)]
					\draw (0,0) -- (0,1);
					\draw (0,1) -- (-2,3);
					\draw (0,1) -- (1,2);
					\draw (1,2) -- (0,3);
					\draw (1,2) -- (2,3);
					\draw (0,3) -- (-1,4);
					\draw (0,3) -- (1,4);
					\draw (-1,4) -- (-1,5);
					\draw (1,4) -- (1,5);
					\draw (-1,5) -- (0,6);
					\draw (1,5) -- (0,6);
					\draw (-2,3) -- (-2,7);
					\draw (0,6) -- (0,7);
					\draw (2,3) -- (2,7);
					\node (u) at (0,-0.5) {$u$};
					\node (a) at (-2,7.5) {$a$};
					\node (b) at (0,7.5) {$b$};
					\node (c) at (2,7.5) {$c$};
					\node (x4p) at (0.1,2.25) {$x_4'$};
					\node (x1) at (-1.4,4) {$x_1$};
					\node (x2) at (1.5,4) {$x_2$};
					\node (x3p) at (1,1.25) {$x_3'$};
				\end{tikzpicture}\\
				&=\sum_{x_3' x_4'}\big(F_u^{ax_1 x_4}\big)_{x_3' x_3}\ \big(F_{x_3'}^{x_1 x_2 c}\big)^*_{x_4' x_4}\ \sqrt{\frac{d_{x_1}d_{x_2}}{d_b}}\ \delta_{b,x_4'}\ 
				\begin{tikzpicture}[scale=.5,baseline=(current bounding box.center)]
					\node (u) at (6.5,-0.5) {$u$};
					\node (a) at (4.5,3.5) {$a$};
					\node (b) at (6.5,3.5) {$b$};
					\node (c) at (8.5,3.5) {$c$};
					\draw (u) -- (6.5,1);
					\draw (6.5,1) -- (7.5,2) node[pos=0.4,right] {$x_3'$};
					\draw (7.5,2) -- (6.5,3);
					\draw (7.5,2) -- (8.5,3);
					\draw (6.5,1) -- (4.5,3);
				\end{tikzpicture}\\
				&=\sum_{x_3'}\big(F_u^{ax_1 x_4}\big)_{x_3' x_3}\ \big(F_{x_3'}^{x_1 x_2 c}\big)^*_{b x_4}\ \sqrt{\frac{d_{x_1}d_{x_2}}{d_b}}\ 
				\begin{tikzpicture}[scale=.5,baseline=(current bounding box.center)]
					\node (u) at (6.5,-0.5) {$u$};
					\node (a) at (4.5,3.5) {$a$};
					\node (b) at (6.5,3.5) {$b$};
					\node (c) at (8.5,3.5) {$c$};
					\draw (u) -- (6.5,1);
					\draw (6.5,1) -- (7.5,2) node[pos=0.4,right] {$x_3'$};
					\draw (7.5,2) -- (6.5,3);
					\draw (7.5,2) -- (8.5,3);
					\draw (6.5,1) -- (4.5,3);
				\end{tikzpicture}\\
				&=\sum_{x_3',x_3''}\big(F_u^{ax_1 x_4}\big)_{x_3' x_3}\ \big(F_{x_3'}^{x_1 x_2 c}\big)^*_{b x_4}\ \sqrt{\frac{d_{x_1}d_{x_2}}{d_b}}\ \big(F_u^{abc}\big)^*_{x_3'' x_3'}\ 
				\begin{tikzpicture}[scale=.5,baseline=(current bounding box.center)]
					\node (u) at (6.5,-0.5) {$u$};
					\node (a) at (4.5,3.5) {$a$};
					\node (b) at (6.5,3.5) {$b$};
					\node (c) at (8.5,3.5) {$c$};
					\draw (u) -- (6.5,1);
					\draw (6.5,1) -- (8.5,3);
					\draw (5.5,2) -- (6.5,3);
					\draw (5.5,2) -- (4.5,3);
					\draw (6.5,1) -- (5.5,2) node[pos=0.4,left] {$x_3''\ $};
				\end{tikzpicture}.
			\end{align}
		}
		In the second step, the inverse $F$-matrix acts on the space $\bigoplus_{x_4}V_{x_3'}^{x_1 x_4}\otimes V_{x_4}^{x_2 c}$ and in the third step, we have used the rules for evaluating bigons. Alternatively, we can first let the inverse $F$-matrix act on $\bigoplus_{x_4}V_{u}^{x_3 x_4}\otimes V_{x_4}^{x_2 c}$ and the $F$-matrix on $\bigoplus_{x_3}V_{x_4'}^{x_3 x_2}\otimes V_{x_3}^{a x_1}$ afterwards:
		{\allowdisplaybreaks
			\begin{align}
				\begin{tikzpicture}[scale=.5,baseline=(current bounding box.center)]
					\draw (0,0) -- (0,1);
					\draw (0,1) -- (-1,2);
					\draw (0,1) -- (1,2);
					\draw (-1,2) -- (-1,3);
					\draw (1,2) -- (1,3);
					\draw (-1,3) -- (-2,4);
					\draw (-1,3) -- (0,4);
					\draw (1,3) -- (0,4);
					\draw (1,3) -- (2,4);
					\draw (-2,4) -- (-2,6);
					\draw (0,4) -- (0,6);
					\draw (2,4) -- (2,6);
					\node (u) at (0,-0.5) {$u$};
					\node (a) at (-2,6.5) {$a$};
					\node (b) at (0,6.5) {$b$};
					\node (c) at (2,6.5) {$c$};
					\node (x3) at (-1.5,1.75) {$x_3$};
					\node (x4) at (1.5,1.75) {$x_4$};
					\node (x1) at (-0.8,3.75) {$x_1$};
					\node (x2) at (0.8,3.75) {$x_2$};
				\end{tikzpicture}	
				&=\sum_{x_4'} \big(F_u^{x_3 x_2 c}\big)^*_{x_4' x_4}\ 
				\begin{tikzpicture}[scale=.5,baseline=(current bounding box.center)]
					\draw (0,0) -- (0,1);
					\draw (0,1) -- (-1,2);
					\draw (0,1) -- (1,2);
					\draw (-1,2) -- (-2,3);
					\draw (-1,2) -- (0,3);
					\draw (-2,3) -- (-2,4);
					\draw (0,3) -- (0,4);
					\draw (-2,4) -- (-3,5);
					\draw (-2,4) -- (-1,5);
					\draw (0,4) -- (-1,5);
					\draw (-3,5) -- (-3,6);
					\draw (-1,5) -- (-1,6);
					\draw (1,2) -- (1,6);
					\node (u) at (0,-0.5) {$u$};
					\node (a) at (-3,6.5) {$a$};
					\node (b) at (-1,6.5) {$b$};
					\node (c) at (1,6.5) {$c$};
					\node (x4p) at (-1,1.25) {$x_4'$};
					\node (x3) at (-2.4,3) {$x_3$};
					\node (x2) at (0.5,3) {$x_2$};
					\node (x1) at (-1.1,4.25) {$x_1$};
				\end{tikzpicture}\\
				&=\sum_{x_4' x_3'} \big(F_u^{x_3 x_2 c}\big)^*_{x_4' x_4}\ \big(F_{x_4'}^{a x_1 x_2}\big)_{x_3' x_3}\ 
				\begin{tikzpicture}[scale=.5,baseline=(current bounding box.center)]
					\draw (0,0) -- (0,1);
					\draw (0,1) -- (-1,2);
					\draw (0,1) -- (2,3);
					\draw (-1,2) -- (0,3);
					\draw (-1,2) -- (-2,3);
					\draw (0,3) -- (-1,4);
					\draw (0,3) -- (1,4);
					\draw (-1,4) -- (-1,5);
					\draw (1,4) -- (1,5);
					\draw (-1,5) -- (0,6);
					\draw (1,5) -- (0,6);
					\draw (-2,3) -- (-2,7);
					\draw (0,6) -- (0,7);
					\draw (2,3) -- (2,7);
					\node (u) at (0,-0.5) {$u$};
					\node (a) at (-2,7.5) {$a$};
					\node (b) at (0,7.5) {$b$};
					\node (c) at (2,7.5) {$c$};
					\node (x3p) at (0.1,2.25) {$x_3'$};
					\node (x1) at (-1.4,4) {$x_1$};
					\node (x2) at (1.5,4) {$x_2$};
					\node (x4p) at (-1,1.25) {$x_4'$};
				\end{tikzpicture}\\
				&=\sum_{x_4' x_3'} \big(F_u^{x_3 x_2 c}\big)^*_{x_4' x_4}\ \big(F_{x_4'}^{a x_1 x_2}\big)_{x_3' x_3}\ \sqrt{\frac{d_{x_1}d_{x_2}}{d_b}}\ \delta_{b, x_3'}\ 
				\begin{tikzpicture}[scale=.5,baseline=(current bounding box.center)]
					\node (u) at (6.5,-0.5) {$u$};
					\node (a) at (4.5,3.5) {$a$};
					\node (b) at (6.5,3.5) {$b$};
					\node (c) at (8.5,3.5) {$c$};
					\draw (u) -- (6.5,1);
					\draw (6.5,1) -- (8.5,3);
					\draw (5.5,2) -- (6.5,3);
					\draw (5.5,2) -- (4.5,3);
					\draw (6.5,1) -- (5.5,2) node[pos=0.4,left] {$x_4'\ $};
				\end{tikzpicture}\\
				&=\sum_{x_4'} \big(F_u^{x_3 x_2 c}\big)^*_{x_4' x_4}\ \big(F_{x_4'}^{a x_1 x_2}\big)_{b x_3}\ \sqrt{\frac{d_{x_1}d_{x_2}}{d_b}}\ 
				\begin{tikzpicture}[scale=.5,baseline=(current bounding box.center)]
					\node (u) at (6.5,-0.5) {$u$};
					\node (a) at (4.5,3.5) {$a$};
					\node (b) at (6.5,3.5) {$b$};
					\node (c) at (8.5,3.5) {$c$};
					\draw (u) -- (6.5,1);
					\draw (6.5,1) -- (8.5,3);
					\draw (5.5,2) -- (6.5,3);
					\draw (5.5,2) -- (4.5,3);
					\draw (6.5,1) -- (5.5,2) node[pos=0.4,left] {$x_4'\ $};
				\end{tikzpicture}.
			\end{align}
		}
		We can now compare these two expressions component wise, i.e.\ compare the coefficients of each basis vector that we indicate by $x_i=y$, which completes the proof:
			\begin{align}
				\sum_{x_3'}\big(F_u^{ax_1 x_4}\big)_{x_3' x_3}\ \big(F_{x_3'}^{x_1 x_2 c}\big)^*_{b x_4}\ \sqrt{\frac{d_{x_1}d_{x_2}}{d_b}}\ \big(F_u^{abc}\big)^*_{y x_3'}&=\big(F_u^{x_3 x_2 c}\big)^*_{y x_4}\ \big(F_{y}^{a x_1 x_2}\big)_{b x_3}\ \sqrt{\frac{d_{x_1}d_{x_2}}{d_b}}\\
				\sum_{x_3'}\big(F_u^{ax_1 x_4}\big)_{x_3' x_3}\ \big(F_{x_3'}^{x_1 x_2 c}\big)^*_{b x_4}\ \big(F_u^{abc}\big)^*_{y x_3'}&=\big(F_u^{x_3 x_2 c}\big)^*_{y x_4}\ \big(F_{y}^{a x_1 x_2}\big)_{b x_3}.\qedhere
			\end{align}
	\end{proof}
	
At first sight, it is not obvious in which way these additional equations are helpful for determining the $F$-matrices. It becomes clear when we consider the special case where one or more of the $F$-matrices on the left hand side are one-dimensional: in this case the sum on the left hand side vanishes. Additionally, in some cases the $F$-matrices are even equal to $1$ (see \cref{thm:triv_1dim,thm:eqto1_withlabels}) which further simplifies this equation.

\begin{thm}
	In the fusion category $\mathcal{H}_3$, the following equations hold:
	\begin{align}
	\begin{split}
	\label{eq_addtriv}
	\left(F_\rho^{\rho\rho\rho}\right)_{{}_\alpha\rho \rho}^*\left(F_{{}_\alpha\rho}^{\rho\rho\rho}\right)_{\rho\rho}\sqrt{d_\rho}&=c_1\left(F_\rho^{\rho\rho\rho}\right)_{{}_\alpha\rho 1}+c_2\left(F_\rho^{\rho\rho\rho}\right)_{{}_\alpha\rho \rho}\\
	\left(F_\rho^{\rho\rho\rho}\right)_{\alphastarrho \rho}^*\left(F_{\alphastarrho}^{\rho\rho\rho}\right)_{\rho\rho}\sqrt{d_\rho}&=c_1\left(F_\rho^{\rho\rho\rho}\right)_{\alphastarrho 1}+c_2\left(F_\rho^{\rho\rho\rho}\right)_{\alphastarrho \rho}.
	\end{split}
	\end{align}
\end{thm}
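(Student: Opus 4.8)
The plan is to read both identities in \cref{eq_addtriv} off a single specialisation of \cref{thm:addeq1}, and then to fold the resulting internal sum down to two channels using the trivalent skein data of $\mathcal{H}_3$.

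First I would specialise \cref{eq:addeq} to the all-$\rho$ configuration
\[
u=a=b=c=x_1=x_2=x_3=x_4=\rho,\qquad y\in\{{}_\alpha\rho,\alphastarrho\},
\]
the two choices of $y$ producing the two lines of \cref{eq_addtriv}. Before doing so I must check the hypotheses of \cref{thm:addeq1}. Reading off \cref{fig:fusiontable}, $\rho\otimes\rho=1+\rho+{}_\alpha\rho+\alphastarrho$, so every multiplicity required there collapses to $N_\rho^{\rho\rho}=1$, while $\dim V_\rho^{\rho\rho\rho}=N_\rho^{\rho\rho\rho}=4\neq 0$ (the coefficient of $\rho$ in $\rho^{\otimes 3}$); hence the equation applies and $y$ may be taken to be any summand of $\rho\otimes\rho$. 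With these labels the right-hand side of \cref{eq:addeq} becomes exactly $\big(F_\rho^{\rho\rho\rho}\big)^*_{{}_\alpha\rho\,\rho}\big(F_{{}_\alpha\rho}^{\rho\rho\rho}\big)_{\rho\rho}$, i.e.\ the left-hand side of \cref{eq_addtriv} up to the factor $\sqrt{d_\rho}$. This factor is nothing but the bigon weight $\sqrt{d_{x_1}d_{x_2}/d_b}=\sqrt{d_\rho}$ appearing in the proof of \cref{thm:addeq1}; retaining it rather than cancelling it as in the general statement is what accounts for the $\sqrt{d_\rho}$ in \cref{eq_addtriv}.

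The left-hand side of \cref{eq:addeq} then yields, after multiplying through by $\sqrt{d_\rho}$,
\[
\big(F_\rho^{\rho\rho\rho}\big)^*_{{}_\alpha\rho\,\rho}\big(F_{{}_\alpha\rho}^{\rho\rho\rho}\big)_{\rho\rho}\sqrt{d_\rho}
=\sqrt{d_\rho}\sum_{x_3'}\big(F_\rho^{\rho\rho\rho}\big)_{x_3'\rho}\,\big(F_{x_3'}^{\rho\rho\rho}\big)^*_{\rho\rho}\,\big(F_\rho^{\rho\rho\rho}\big)^*_{{}_\alpha\rho\,x_3'},
\]
the internal label $x_3'$ running over the summands $1,\rho,{}_\alpha\rho,\alphastarrho$ of $\rho\otimes\rho$. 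To reach the two-term right-hand side of \cref{eq_addtriv}, I would invoke the trivalent skein relation of $\mathcal{H}_3$—the relation resolving the relevant $\rho$-tangle into its $1$- and $\rho$-channels, whose coefficients are the constants $c_1,c_2$—to rewrite the ${}_\alpha\rho$- and $\alphastarrho$-channels; collecting the surviving $x_3'=1$ and $x_3'=\rho$ contributions then gives $c_1\big(F_\rho^{\rho\rho\rho}\big)_{{}_\alpha\rho\,1}+c_2\big(F_\rho^{\rho\rho\rho}\big)_{{}_\alpha\rho\,\rho}$, the conjugation on the $x_3'$-sum being immaterial in the real gauge in which \cref{eq_addtriv} is stated. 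Taking $y=\alphastarrho$ instead of ${}_\alpha\rho$ runs through verbatim—${}_\alpha\rho$ and $\alphastarrho$ sit symmetrically in $\rho\otimes\rho$ (\cref{fig:fusiontable})—and delivers the second line.

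The main obstacle is precisely this reduction of the four-term internal sum to the two labelled channels. The raw specialisation of \cref{thm:addeq1} keeps all of $1,\rho,{}_\alpha\rho,\alphastarrho$, whereas \cref{eq_addtriv} exhibits only the $1$- and $\rho$-channels; the crux is therefore to justify, using the trivalent relations rather than the pentagon identity alone, that the ${}_\alpha\rho$- and $\alphastarrho$-contributions can be re-expressed through the $1$- and $\rho$-channels, and to pin down the resulting coefficients $c_1,c_2$ together with the correct placement of the $\sqrt{d_\rho}$.
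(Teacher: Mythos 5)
Your setup is fine as far as it goes: the hypotheses of \cref{thm:addeq1} do hold in the all-$\rho$ configuration, and the specialisation of \cref{eq:addeq} with $y\in\{{}_\alpha\rho,\alphastarrho\}$ does produce the left-hand sides of \cref{eq_addtriv}. But the step you yourself flag as the crux is exactly where the proof is missing, and the mechanism you sketch for it is wrong. The square-popping relation \cref{eq:SquarePop} is an identity between \emph{diagrams} in $\mathcal{C}_4$; once you have passed to the purely numerical identity \cref{eq:addeq} (in which the bigon factor $\sqrt{d_\rho}$ has already been cancelled), there is no diagram left to apply it to, and it cannot be ``invoked'' channel by channel on a sum of $F$-symbol products. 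Concretely, your claim that the $x_3'=1$ and $x_3'=\rho$ terms ``survive'' as $c_1\big(F_\rho^{\rho\rho\rho}\big)_{y1}+c_2\big(F_\rho^{\rho\rho\rho}\big)_{y\rho}$ is false: taking $y={}_\alpha\rho$, $p_1=p_2=1$ and the values of \cref{app:Fsymbols}, the $x_3'=1$ term of your sum times $\sqrt{d_\rho}$ is $\sqrt{A}\cdot1\cdot\big(-\sqrt{A}\big)\cdot\sqrt{d_\rho}=-A\sqrt{d_\rho}=-\sqrt{A}$ (since $A=1/d_\rho$), whereas $c_1\big(F_\rho^{\rho\rho\rho}\big)_{{}_\alpha\rho 1}=-c_1\sqrt{A}$ with $c_1\neq1$; moreover the $x_3'={}_\alpha\rho$ and $x_3'=\alphastarrho$ terms, $-D_+^2D_-$ and $D_-^2B$, are not zero. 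So no term-by-term folding of the four-term sum can yield the two-term right-hand side.

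What the paper does instead is stay diagrammatic and never form that sum: it expands the square-with-legs vector in $V_\rho^{\rho\rho\rho}$ in the tree basis in two independent ways. One expansion reuses the $F$-move chain from the \emph{proof} of \cref{thm:addeq1} (not its cancelled conclusion), giving coefficient $\big(F_\rho^{\rho\rho\rho}\big)^*_{x\rho}\big(F_x^{\rho\rho\rho}\big)_{\rho\rho}\sqrt{d_\rho}$ for the tree with internal label $x$ — this is \cref{eq:thmeq1}, and the $\sqrt{d_\rho}$ is the bigon weight that survives because nothing is cancelled against it. The other expansion substitutes \cref{eq:SquarePop} and then applies a single $F$-move to the two rotated trees, giving coefficient $c_1\big(F_\rho^{\rho\rho\rho}\big)_{x1}+c_2\big(F_\rho^{\rho\rho\rho}\big)_{x\rho}$ whenever $x\notin\{1,\rho\}$ (the unrotated trees only contribute to $x=1,\rho$). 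Comparing coefficients at $x={}_\alpha\rho$ and $x=\alphastarrho$ is the whole proof. To repair your argument you would have to reconstruct precisely this two-sided comparison, at which point the detour through \cref{eq:addeq} buys nothing. A smaller but real flaw: appealing to ``the real gauge'' to discard conjugations mid-proof is not legitimate — \cref{eq_addtriv} is established before any gauge is fixed, and the paper's remark following the gauge-invariance theorem notes that this equation is \emph{not} gauge invariant, so realness of the eventual solution cannot be assumed while proving it.
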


\begin{proof}
	We apply \cref{thm:addeq1} in the case $x_1=x_2=x_3=x_4=\rho$ and obtain
	\begin{equation}
	\label{eq:thmeq1}
	\begin{tikzpicture}[scale=.5,baseline=(current bounding box.center)]
	\draw (0,0) -- (0,1);
	\draw (0,1) -- (-1,2);
	\draw (0,1) -- (1,2);
	\draw (-1,2) -- (-2,3);
	\draw (-1,2) -- (0,3);
	\draw (1,2) -- (0,3);
	\draw (-2,3) -- (-2,4);
	\draw (0,3) -- (0,4);
	\draw (1,2) -- (2,3);
	\draw (2,3) -- (2,4);
	\node (1) at (0,-0.5) {$\rho$};
	\node (2) at (-2,4.5) {$\rho$};
	\node (3) at (0,4.5) {$\rho$};
	\node (4) at (-0.7,1.2) {$\rho$};
	\node (5) at (0.7,1.2) {$\rho$};
	\node (6) at (-0.7,2.7) {$\rho$};
	\node (7) at (0.7,2.7) {$\rho$};
	\node (8) at (2,4.5) {$\rho$};
	\end{tikzpicture}
	=\sum_{x} \big(F_\rho^{\rho\rho\rho}\big)^*_{x \rho}\ \big(F_{x}^{\rho\rho\rho}\big)_{\rho\rho}\ \sqrt{d_\rho}\ 
	\begin{tikzpicture}[scale=.5,baseline=(current bounding box.center)]
	\node (u) at (6.5,-0.5) {$\rho$};
	\node (a) at (4.5,3.5) {$\rho$};
	\node (b) at (6.5,3.5) {$\rho$};
	\node (c) at (8.5,3.5) {$\rho$};
	\draw (u) -- (6.5,1);
	\draw (6.5,1) -- (8.5,3);
	\draw (5.5,2) -- (6.5,3);
	\draw (5.5,2) -- (4.5,3);
	\draw (6.5,1) -- (5.5,2) node[pos=0.4,left] {$x\ $};
	\end{tikzpicture}.
	\end{equation}
	Additionally, we know from \cite{MPS17} that
	\begin{equation}
	\label{eq:SquarePop}
	\begin{tikzpicture}[scale=.8,baseline=(current bounding box.center)]
	\draw (-0.25,-0.25) -- (0,0);
	\draw (-0.25,1.25) -- (0,1);
	\draw (1.25,1.25) -- (1,1);
	\draw (1.25,-0.25) -- (1,0);	
	\draw (0,0) to [bend left] (0,1);
	\draw (0,1) to [bend left] (1,1);
	\draw (1,1) to [bend left] (1,0);
	\draw (1,0) to [bend left] (0,0);
	\end{tikzpicture}
	=c_1\ \left(\ 
	\begin{tikzpicture}[scale=.8,baseline=(current bounding box.center)]
	\draw (-0.25,-0.25) to [bend right=70] (-0.25,1.25);
	\draw (1.25,-0.25) to [bend left=70] (1.25,1.25);
	\end{tikzpicture}
	+
	\begin{tikzpicture}[scale=.8,baseline=(current bounding box.center)]
	\draw (-0.25,-0.25) to [bend left=70] (1.25,-0.25);
	\draw (-0.25,1.25) to [bend right=70] (1.25,1.25);
	\end{tikzpicture}
	\ \right)+c_2\ \left(\ 
	\begin{tikzpicture}[scale=.8,baseline=(current bounding box.center)]
	\draw (-0.25,-0.25) -- (0.25,0.5);
	\draw (-0.25,1.25) -- (0.25,0.5);
	\draw (0.25,0.5) -- (0.75,0.5);
	\draw (1.25,-0.25) -- (0.75,0.5);
	\draw (1.25,1.25) -- (0.75,0.5);
	\end{tikzpicture}
	+
	\begin{tikzpicture}[scale=.8,baseline=(current bounding box.center),rotate=90]
	\draw (-0.25,-0.25) -- (0.25,0.5);
	\draw (-0.25,1.25) -- (0.25,0.5);
	\draw (0.25,0.5) -- (0.75,0.5);
	\draw (1.25,-0.25) -- (0.75,0.5);
	\draw (1.25,1.25) -- (0.75,0.5);
	\end{tikzpicture}
	\ \right)
	\end{equation}
	with 
	\begin{align}
	c_1&=\frac{1}{18} \left(\sqrt{13}+7\right)\\
	c_2&=\frac{\sqrt{\sqrt{13}-2}}{3}
	\end{align}
	(see \cref{app:A_SquarePop} for the derivation of these constants). This equation can be translated into tree diagrams and then be manipulated by applying $F$-moves:
	\begin{align}
	\begin{tikzpicture}[scale=.4,baseline=(current bounding box.center)]
	\draw (0,0) -- (0,1);
	\draw (0,1) -- (-1,2);
	\draw (0,1) -- (1,2);
	\draw (-1,2) -- (-2,3);
	\draw (-1,2) -- (0,3);
	\draw (1,2) -- (0,3);
	\draw (-2,3) -- (-2,4);
	\draw (0,3) -- (0,4);
	\draw (1,2) -- (2,3);
	\draw (2,3) -- (2,4);
	\node (1) at (0,-0.5) {$\rho$};
	\node (2) at (-2,4.5) {$\rho$};
	\node (3) at (0,4.5) {$\rho$};
	\node (4) at (-0.7,1.2) {$\rho$};
	\node (5) at (0.7,1.2) {$\rho$};
	\node (6) at (-0.7,2.7) {$\rho$};
	\node (7) at (0.7,2.7) {$\rho$};
	\node (8) at (2,4.5) {$\rho$};
	\end{tikzpicture}
	&=c_1\ \left(
	\begin{tikzpicture}[scale=.4,baseline=(current bounding box.center)]
	\node (u) at (6.5,-0.5) {$\rho$};
	\node (a) at (4.5,3.5) {$\rho$};
	\node (b) at (6.5,3.5) {$\rho$};
	\node (c) at (8.5,3.5) {$\rho$};
	\draw (u) -- (6.5,1);
	\draw (6.5,1) -- (8.5,3);
	\draw (5.5,2) -- (6.5,3);
	\draw (5.5,2) -- (4.5,3);
	\draw (6.5,1) -- (5.5,2) node[pos=0.4,left] {$1\ $};
	\end{tikzpicture}
	+
	\begin{tikzpicture}[scale=.4,baseline=(current bounding box.center)]
	\node (u) at (6.5,-0.5) {$\rho$};
	\node (a) at (4.5,3.5) {$\rho$};
	\node (b) at (6.5,3.5) {$\rho$};
	\node (c) at (8.5,3.5) {$\rho$};
	\draw (u) -- (6.5,1);
	\draw (6.5,1) -- (7.5,2) node[pos=0.4,right] {$\ 1$};
	\draw (7.5,2) -- (6.5,3);
	\draw (7.5,2) -- (8.5,3);
	\draw (6.5,1) -- (4.5,3);
	\end{tikzpicture}
	\right) +c_2\ \left(
	\begin{tikzpicture}[scale=.4,baseline=(current bounding box.center)]
	\node (u) at (6.5,-0.5) {$\rho$};
	\node (a) at (4.5,3.5) {$\rho$};
	\node (b) at (6.5,3.5) {$\rho$};
	\node (c) at (8.5,3.5) {$\rho$};
	\draw (u) -- (6.5,1);
	\draw (6.5,1) -- (8.5,3);
	\draw (5.5,2) -- (6.5,3);
	\draw (5.5,2) -- (4.5,3);
	\draw (6.5,1) -- (5.5,2) node[pos=0.4,left] {$\rho\ $};
	\end{tikzpicture}
	+
	\begin{tikzpicture}[scale=.4,baseline=(current bounding box.center)]
	\node (u) at (6.5,-0.5) {$\rho$};
	\node (a) at (4.5,3.5) {$\rho$};
	\node (b) at (6.5,3.5) {$\rho$};
	\node (c) at (8.5,3.5) {$\rho$};
	\draw (u) -- (6.5,1);
	\draw (6.5,1) -- (7.5,2) node[pos=0.4,right] {$\ \rho$};
	\draw (7.5,2) -- (6.5,3);
	\draw (7.5,2) -- (8.5,3);
	\draw (6.5,1) -- (4.5,3);
	\end{tikzpicture}
	\right)\\
	\label{eq:thmeq2}
	&=c_1\ \left(
	\begin{tikzpicture}[scale=.4,baseline=(current bounding box.center)]
	\node (u) at (6.5,-0.5) {$\rho$};
	\node (a) at (4.5,3.5) {$\rho$};
	\node (b) at (6.5,3.5) {$\rho$};
	\node (c) at (8.5,3.5) {$\rho$};
	\draw (u) -- (6.5,1);
	\draw (6.5,1) -- (8.5,3);
	\draw (5.5,2) -- (6.5,3);
	\draw (5.5,2) -- (4.5,3);
	\draw (6.5,1) -- (5.5,2) node[pos=0.4,left] {$1\ $};
	\end{tikzpicture}
	+\sum_x \left(F_\rho^{\rho\rho\rho}\right)_{x1}\ 
	\begin{tikzpicture}[scale=.4,baseline=(current bounding box.center)]
	\node (u) at (6.5,-0.5) {$\rho$};
	\node (a) at (4.5,3.5) {$\rho$};
	\node (b) at (6.5,3.5) {$\rho$};
	\node (c) at (8.5,3.5) {$\rho$};
	\draw (u) -- (6.5,1);
	\draw (6.5,1) -- (8.5,3);
	\draw (5.5,2) -- (6.5,3);
	\draw (5.5,2) -- (4.5,3);
	\draw (6.5,1) -- (5.5,2) node[pos=0.4,left] {$x\ $};
	\end{tikzpicture}
	\right)\\
	&\hspace{40pt}+c_2\ \left(
	\begin{tikzpicture}[scale=.4,baseline=(current bounding box.center)]
	\node (u) at (6.5,-0.5) {$\rho$};
	\node (a) at (4.5,3.5) {$\rho$};
	\node (b) at (6.5,3.5) {$\rho$};
	\node (c) at (8.5,3.5) {$\rho$};
	\draw (u) -- (6.5,1);
	\draw (6.5,1) -- (8.5,3);
	\draw (5.5,2) -- (6.5,3);
	\draw (5.5,2) -- (4.5,3);
	\draw (6.5,1) -- (5.5,2) node[pos=0.4,left] {$\rho\ $};
	\end{tikzpicture}
	+\sum_x \left(F_\rho^{\rho\rho\rho}\right)_{x\rho}\ 
	\begin{tikzpicture}[scale=.4,baseline=(current bounding box.center)]
	\node (u) at (6.5,-0.5) {$\rho$};
	\node (a) at (4.5,3.5) {$\rho$};
	\node (b) at (6.5,3.5) {$\rho$};
	\node (c) at (8.5,3.5) {$\rho$};
	\draw (u) -- (6.5,1);
	\draw (6.5,1) -- (8.5,3);
	\draw (5.5,2) -- (6.5,3);
	\draw (5.5,2) -- (4.5,3);
	\draw (6.5,1) -- (5.5,2) node[pos=0.4,left] {$x\ $};
	\end{tikzpicture}
	\right)
	\end{align}
	Comparing the coefficients in \cref{eq:thmeq1} and \cref{eq:thmeq2} of the basis vectors for $x={}_\alpha\rho$ and $x=\alphastarrho$, respectively, completes the proof.
\end{proof}

\subsection{Further methods from trivalent categories}
\label{sec:MethodsTriv}

\begin{thm}
	\label{thm:triv_1dim}
	In the fusion category $\mathcal{H}_3$, if $a,b,c,u\in\{1,\rho\}$ and at least one of the labels $a$, $b$, $c$, or $u$ is the unit label $1$ and the fusion of $a$, $b$, and $c$ to the label $u$ is allowed by the fusion rules, the vector space $V_u^{abc}$ is one-dimensional and the corresponding $F$-matrix is 
	\begin{equation}
	F_u^{abc}=1.
	\end{equation}
\end{thm}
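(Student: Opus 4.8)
\emph{Proof proposal.} The plan is to treat the two assertions—one-dimensionality of $V_u^{abc}$ and the triviality $F_u^{abc}=1$—separately, and to organise the second according to which of the four labels carries the unit. One-dimensionality is pure fusion-rule bookkeeping: writing $\dim V_u^{abc}=\sum_m N_u^{mc}N_m^{ab}$ and using $N^{1b}_m=\delta_{mb}$, $N^{a1}_m=\delta_{ma}$, $N^{m1}_u=\delta_{mu}$, and (by self-duality of $\rho$) $N_1^{mc}=\delta_{mc}$, each case collapses the sum to a single coefficient—$N_u^{bc}$, $N_u^{ac}$, $N_u^{ab}$, or $N_c^{ab}$—and every fusion coefficient among labels in $\{1,\rho\}$ is at most $1$, since the only nontrivial product is $\rho\otimes\rho=1+\rho+{}_\alpha\rho+\alphastarrho$, in which $1$ and $\rho$ each occur once. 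Hence $V_u^{abc}$ is one-dimensional whenever it is nonzero, and each $F_u^{abc}$ is a scalar.

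For the three cases in which one of the \emph{inputs} $a,b,c$ equals $1$ I would argue diagrammatically: by the remark following the definition of a trivalent category, a strand labelled $1$ is the empty diagram and hence invisible, so erasing it turns the two fusion trees related by $F_u^{abc}$ into the \emph{same} planar graph (a single through-strand, or the fusion $\rho\otimes\rho\to u$). The transition matrix between two identical one-dimensional bases is $1$. The same conclusion follows formally from the triangle identities of \cref{thm:triangle,fig:TriangleCases}: reading the commuting triangles as $l_x=F_z^{1xy}l_z$, $r_x=F_z^{x1y}l_y$, and $r_z=F_z^{xy1}r_y$, and imposing the standard normalisation $l_x=r_x=1$ on the one-dimensional spaces $\hom(x,x)$, gives $F_z^{1xy}=F_z^{x1y}=F_z^{xy1}=1$. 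This disposes of every configuration with a $1$ among $a,b,c$, and in particular of the $u=1$ subcases $(1,\rho,\rho)$, $(\rho,1,\rho)$, and $(\rho,\rho,1)$.

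The only genuinely new case is $u=1$ with $a=b=c=\rho$, i.e.\ the scalar $F_1^{\rho\rho\rho}$, which I expect to be the main obstacle: no edge is labelled $1$, so the invisibility argument does not apply. Here I would use the trivalent structure directly. Since $\rho$ is symmetrically self-dual, $\mathcal{C}_3=\hom(1,\rho^{\otimes 3})$ is one-dimensional and spanned by the trivalent vertex $\tau$; the two tree bases entering $F_1^{\rho\rho\rho}$ are obtained from $\tau$ by bending one leg with the symmetric cup, and they differ precisely by a cyclic rotation of the three $\rho$-legs. Rotational invariance of $\tau$, together with the compatibility of the cup with this rotation guaranteed by symmetric self-duality, forces the two normalised diagrams to coincide, so $F_1^{\rho\rho\rho}=1$. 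Conceptually this last identity is the statement that the Frobenius–Schur indicator of $\rho$ is $+1$; it is the one place where triviality is not a formal consequence of unit coherence but genuinely invokes the symmetric self-duality supplied by the trivalent structure of $\mathcal{H}_3$.
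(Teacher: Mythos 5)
Your proposal is correct, and for the configurations with a unit among the upper labels $a$, $b$, $c$ it follows essentially the paper's own route: the paper likewise collapses the decomposition $V_u^{1bc}=V_b^{1b}\otimes V_u^{bc}$ to get one-dimensionality, reads off $l_b=(F_u^{1bc})_{bu}\,l_u$ from the commuting triangle, and concludes using $l_1=l_\rho=1$; your ``standard normalisation'' $l_x=r_x=1$ is exactly this fact, which the paper imports from the strict diagrammatic presentation of the trivalent category \cite{MPS17} rather than imposing as a separate choice. The genuine difference is your final paragraph. The paper's proof opens with ``without loss of generality, $a=1$'' and declares $b=1$ and $c=1$ analogous, so it never treats the configuration $u=1$, $a=b=c=\rho$, which \emph{is} admitted by the hypotheses (since $1\subset\rho\otimes\rho$ makes $V_1^{\rho\rho\rho}$ one-dimensional, and indeed $F_1^{\rho\rho\rho}=1$ appears in the table of \cref{app:Fsymbols}), and which no triangle identity can reach because no upper label carries the unit. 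Your rotation argument closes precisely this gap: expressing both tree basis vectors of $V_1^{\rho\rho\rho}$ through the trivalent vertex, the zig-zags created by the cup straighten so that one basis vector is $\tau$ itself while the other is its rotation, and rotational invariance of $\tau$ (together with symmetric self-duality of $\rho$, which makes the unoriented cup unambiguous) then forces $F_1^{\rho\rho\rho}=1$. Your closing remark is also consistent with the paper's data, since $d_\rho\,(F_\rho^{\rho\rho\rho})_{11}=d_\rho A=1$ confirms Frobenius--Schur indicator $+1$ for $\rho$. In short, the paper's proof is shorter because it silently restricts to a unit among $a,b,c$; yours is complete for the theorem as actually stated, at the modest cost of invoking one further piece of the trivalent structure, namely the rotational invariance of $\tau$.
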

\begin{proof}
	Let, without loss of generality, $a=1$. Then we can decompose the vector space $V_u^{1bc}$ into
	\begin{align}
	V_u^{1bc}&=\bigoplus_x V_x^{1b}\otimes V_u^{xc}\\
	&=V_b^{1b}\otimes V_u^{bc},
	\end{align}
	therefore $\dim \left(V_u^{1bc}\right)=\dim \left(V_b^{1b}\otimes V_u^{bc}\right)= N_b^{1b} N_u^{bc}=1$. The $F$-matrix acts on this decomposition in the following way:
	\begin{align}
	\begin{tikzpicture}[scale=.5,baseline=(current bounding box.center)]
	\node (u) at (0,-0.5) {$u$};
	\node (1) at (-2,3.5) {$1$};
	\node (b) at (0,3.5) {$b$};
	\node (c) at (2,3.5) {$c$};
	\draw (u) -- (0,1);
	\draw (0,1) -- (-1,2) node[pos=0.4,left] {$b$};
	\draw[dashed] (-1,2) -- (-2,3);
	\draw (-1,2) -- (0,3);
	\draw (0,1) -- (2,3);
	\end{tikzpicture}
	&=\sum_y \left(F_u^{1bc}\right)_{by}
	\begin{tikzpicture}[scale=.5,baseline=(current bounding box.center)]
	\node (u) at (6.5,-0.5) {$u$};
	\node (1) at (4.5,3.5) {$1$};
	\node (b) at (6.5,3.5) {$b$};
	\node (c) at (8.5,3.5) {$c$};
	\draw (u) -- (6.5,1);
	\draw (6.5,1) -- (7.5,2) node[pos=0.4,right] {$y$};
	\draw (7.5,2) -- (6.5,3);
	\draw (7.5,2) -- (8.5,3);
	\draw[dashed] (6.5,1) -- (4.5,3);
	\end{tikzpicture}\\
	&=\left(F_u^{1bc}\right)_{bu}
	\begin{tikzpicture}[scale=.5,baseline=(current bounding box.center)]
	\node (u) at (6.5,-0.5) {$u$};
	\node (1) at (4.5,3.5) {$1$};
	\node (b) at (6.5,3.5) {$b$};
	\node (c) at (8.5,3.5) {$c$};
	\draw (u) -- (6.5,1);
	\draw (6.5,1) -- (7.5,2) node[pos=0.4,right] {$u$};
	\draw (7.5,2) -- (6.5,3);
	\draw (7.5,2) -- (8.5,3);
	\draw[dashed] (6.5,1) -- (4.5,3);
	\end{tikzpicture}
	\end{align}
	According to the triangle equation,  
	\begin{align}
	\l_b
	\begin{tikzpicture}[scale=.5,baseline=(current bounding box.center)]
	\draw (0,0) -- (0,1);
	\draw (0,1) -- (-1,2);
	\draw (0,1) -- (1,2);
	\node (u) at (0,-0.5) {$u$};
	\node (b) at (-1,2.5) {$b$};
	\node (c) at (1,2.5) {$c$};
	\end{tikzpicture}
	=\left(F_u^{1bc}\right)_{bu}\ \l_u
	\begin{tikzpicture}[scale=.5,baseline=(current bounding box.center)]
	\draw (0,0) -- (0,1);
	\draw (0,1) -- (-1,2);
	\draw (0,1) -- (1,2);
	\node (u) at (0,-0.5) {$u$};
	\node (b) at (-1,2.5) {$b$};
	\node (c) at (1,2.5) {$c$};
	\end{tikzpicture}.
	\end{align}
	Since $b,c,u\in\{1,\rho\}$, only the maps $l_1$ and $l_\rho$ can occur. From \cite{MPS17} we know that $l_1=l_\rho=1$. Hence,
	\begin{align}
	\begin{tikzpicture}[scale=.5,baseline=(current bounding box.center)]
	\draw (0,0) -- (0,1);
	\draw (0,1) -- (-1,2);
	\draw (0,1) -- (1,2);
	\node (u) at (0,-0.5) {$u$};
	\node (b) at (-1,2.5) {$b$};
	\node (c) at (1,2.5) {$c$};
	\end{tikzpicture}
	=\left(F_u^{1bc}\right)_{bu}
	\begin{tikzpicture}[scale=.5,baseline=(current bounding box.center)]
	\draw (0,0) -- (0,1);
	\draw (0,1) -- (-1,2);
	\draw (0,1) -- (1,2);
	\node (u) at (0,-0.5) {$u$};
	\node (b) at (-1,2.5) {$b$};
	\node (c) at (1,2.5) {$c$};
	\end{tikzpicture}
	\end{align}
	and therefore $\left(F_u^{1bc}\right)_{bu}=1$. The proof for the cases $b=1$ and $c=1$ works analogously.
\end{proof}

\begin{thm}
	\label{thm:eqto1_withlabels}
	In the fusion category $\mathcal{H}_3$,
	\begin{align}
	F_x^{\rho1\rho}&=1,\\
	F_\rho^{1\rho x}&=1,\\
	F_\rho^{x \rho1}&=1,
	\end{align}
	for $x\in\{\rho,{}_\alpha\rho,\alphastarrho\}$.
\end{thm}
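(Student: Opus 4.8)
The plan is to handle all three identities by the mechanism already used in the proof of \cref{thm:triv_1dim}: a single $1$-leg collapses the relevant morphism space to dimension one, so each $F$-symbol is a scalar, and the appropriate form of the triangle equation then evaluates that scalar as a ratio of unit constraints.

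First I would verify the one-dimensionality. Decomposing to the left, $V_x^{\rho1\rho}\cong V_\rho^{\rho1}\otimes V_x^{\rho\rho}$, $V_\rho^{1\rho x}\cong V_\rho^{1\rho}\otimes V_\rho^{\rho x}$, and $V_\rho^{x\rho1}\cong V_\rho^{x\rho}\otimes V_\rho^{\rho1}$, so the three dimensions are $N_x^{\rho\rho}$, $N_\rho^{\rho x}$, and $N_\rho^{x\rho}$. From \cref{fig:fusiontable} each of these equals $1$ for $x\in\{\rho,{}_\alpha\rho,\alphastarrho\}$, so every one of the $F$-symbols is a genuine scalar and each statement is the assertion that this scalar equals $1$.

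Next come the triangle arguments for the left- and right-unit cases. For $F_\rho^{1\rho x}$ I would use \cref{subfig:Trianglea} with $(z,x_{\mathrm{tri}},y_{\mathrm{tri}})=(\rho,\rho,x)$; repeating in this one-dimensional setting the computation behind \cref{Eq:Inverses1} gives $F_\rho^{1\rho x}=l_\rho/l_\rho$. For $F_\rho^{x\rho1}$ I would use \cref{subfig:Triangleb} with $(z,x_{\mathrm{tri}},y_{\mathrm{tri}})=(\rho,x,\rho)$, which yields $F_\rho^{x\rho1}=r_\rho/r_\rho$. In both cases the constraints cancel and the value is $1$. The reason these go through even though $x\notin\{1,\rho\}$ is that the two unit constraints produced by the triangle equation depend only on the output label and on the leg adjacent to the $1$, both of which are $\rho$ here, and not on the third leg $x$; this is exactly what lets me drop the hypothesis of \cref{thm:triv_1dim} that \emph{all} labels lie in $\{1,\rho\}$.

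The middle-unit case $F_x^{\rho1\rho}$ is the genuinely different one, and I expect it to be the main obstacle. Applying \cref{fig:triangle_eq} with $(z,x_{\mathrm{tri}},y_{\mathrm{tri}})=(x,\rho,\rho)$, the one-dimensional commuting triangle gives $F_x^{\rho1\rho}=r_\rho/l_\rho$: now the numerator carries the right-unit constraint on the first $\rho$ and the denominator the left-unit constraint on the second $\rho$, so they do not cancel formally. To finish I would feed in $r_\rho=l_\rho$, which holds because in the strictly unital normalisation of the trivalent category of \cite{MPS17} one has $l_\rho=r_\rho=1$ (the proof of \cref{thm:triv_1dim} already invokes $l_1=l_\rho=1$ from the same source), the symmetric self-duality of $\rho$ being what makes this left/right symmetry of the unit constraints available. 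With $r_\rho=l_\rho$ in hand, $F_x^{\rho1\rho}=1$, completing the argument.
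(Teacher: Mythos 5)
Your proposal is correct and follows essentially the same route as the paper: the middle-unit identity $F_x^{\rho1\rho}=1$ comes from the main triangle equation specialised to $(x_{\mathrm{tri}},y_{\mathrm{tri}},z)=(\rho,\rho,x)$ together with the trivalent-category fact $r_\rho=l_\rho=1$ from \cite{MPS17}, and the other two identities come from the two variations in \cref{fig:TriangleCases}, where the unit constraints cancel outright. Your explicit dimension count and your remark on why the $\{1,\rho\}$ hypothesis of \cref{thm:triv_1dim} can be dropped are correct refinements of what the paper leaves implicit, not a different argument.
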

\begin{proof}
	This result is a combination of special cases of the triangle equation and properties that follow from the fact that $\mathcal{H}_3$ is a trivalent category. The first equations results from the following specific triangle equation 
	\begin{figure}[H]
		\centering
		\begin{tikzpicture}[scale=0.4]
		\node at (0,-1.4) {$x$};
		\node at (-1,1.4) {$\rho$};
		\node at (1,1.4) {$\rho$};
		\draw (0,-1) -- (0,0);
		\draw (0,0) -- (-1,1);
		\draw (0,0) -- (1,1);
		\node at (-5,4.6) {$x$};
		\node at (-7,8.4) {$\rho$};
		\node at (-5,8.4) {$1$};
		\node at (-3,8.4) {$\rho$};
		\draw (-5,5) -- (-5,6);
		\draw (-5,6) -- (-6,7);
		\draw (-5,6) -- (-4,7);
		\draw (-6,7) -- (-7,8);
		\draw[dashed] (-6,7) -- (-5,8);
		\draw (-4,7) -- (-3,8);
		\node at (5,4.6) {$x$};
		\node at (7,8.4) {$\rho$};
		\node at (5,8.4) {$1$};
		\node at (3,8.4) {$\rho$};
		\draw (5,5) -- (5,6);
		\draw (5,6) -- (6,7);
		\draw (5,6) -- (4,7);
		\draw (6,7) -- (7,8);
		\draw[dashed] (6,7) -- (5,8);
		\draw (4,7) -- (3,8);
		\draw[->] (-2,6) -- node[above] {$F_x^{\rho1\rho}$} (2,6);
		\draw[->] (-4,4) -- node[left] {$r_\rho$} (-2,2);
		\draw[->] (4,4) -- node[right] {$\ l_\rho$} (2,2);
		\end{tikzpicture}
	\end{figure}
	\noindent
	Since every map in this diagram is one-dimensional and we know that $r_\rho=l_\rho=1$ (since it is a trivalent category),
	\begin{align}
	r_\rho&=F_x^{\rho1\rho}l_\rho\\
	1&=F_x^{\rho1\rho}.
	\end{align}
	The remaining equations follow analogously as special cases of the variations of the triangle equation shown in \cref{fig:TriangleCases}.
\end{proof}

\begin{thm}
	\label{thm:Frrrr}
	In the fusion category $\mathcal{H}_3$, the following equation holds:
	\begin{align}
	\big(F_\rho^{\rho\rho\rho}\big)_{\rho\rho}=\frac{t}{\sqrt{d_\rho}}
	\end{align}
	with $t=-\left(\frac{2}{3}d_\rho+\frac{5}{3}\right)\sqrt{d_\rho}$ and $d_\rho=\frac{3+\sqrt{13}}{2}$.
\end{thm}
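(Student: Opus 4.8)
The plan is to exploit the fact that $\rho$ generates a trivalent category, so that the single recoupling coefficient $\big(F_\rho^{\rho\rho\rho}\big)_{\rho\rho}$ can be read off from the evaluation of a closed trivalent graph. Concretely, I would extract it by gluing the two associativity trees for $(\rho\otimes\rho)\otimes\rho$ and $\rho\otimes(\rho\otimes\rho)$ along their boundary: projecting the left tree (internal edge $\rho$) onto the right tree (internal edge $\rho$) produces the tetrahedral network all of whose six edges are labelled $\rho$, while orthogonality of the internal channels (two distinct simple labels cannot be joined by a bigon) kills every other term. This identifies $\big(F_\rho^{\rho\rho\rho}\big)_{\rho\rho}$ as the ratio of this tetrahedron to the squared norm of the recoupling tree.

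The two normalising quantities are then evaluated by the standard skein moves. First, the tree norm reduces by a single bigon move to a theta graph, giving $\theta(\rho,\rho,\rho)^2/d_\rho$ (with $d_\rho$ the loop value). Second --- and this is the crucial step --- the all-$\rho$ tetrahedron is collapsed by the \emph{triangle relation} of a trivalent category (\cite{MPS17}): replacing one triangular face by a single trivalent vertex times the triangle coefficient $t$ turns the tetrahedron into $t\,\theta(\rho,\rho,\rho)$. Dividing, the theta factors cancel down to $\big(F_\rho^{\rho\rho\rho}\big)_{\rho\rho}=t\,d_\rho/\theta(\rho,\rho,\rho)$, and inserting the normalisation $\theta(\rho,\rho,\rho)=d_\rho^{3/2}$ leaves exactly $t/\sqrt{d_\rho}$. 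The explicit value $t=-\big(\tfrac23 d_\rho+\tfrac53\big)\sqrt{d_\rho}$ is supplied by the same trivalent data that fixed the square relation \cref{eq:SquarePop}, i.e.\ it is determined by $d_\rho$ together with $c_1,c_2$ (equivalently, read off from \cite{MPS17}).

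A second, self-contained route stays entirely inside the machinery already set up here: specialise \cref{thm:addeq1} to all labels $\rho$ to obtain \cref{eq:thmeq1}, expand the same square diagram by the square relation \cref{eq:SquarePop} as in \cref{eq:thmeq2}, and compare the coefficients of the internal-$\rho$ vertex (the case $x=\rho$, the analogue of the $x={}_\alpha\rho,\alphastarrho$ comparisons that produced \cref{eq_addtriv}). In the real gauge this yields an algebraic relation of the shape $\sqrt{d_\rho}\,\big(F_\rho^{\rho\rho\rho}\big)_{\rho\rho}^2=c_2+c_2\big(F_\rho^{\rho\rho\rho}\big)_{\rho\rho}+c_1\big(F_\rho^{\rho\rho\rho}\big)_{\rho 1}$, which together with the $x=1$ comparison and the one-dimensional entries fixed in \cref{thm:triv_1dim,thm:eqto1_withlabels} determines $\big(F_\rho^{\rho\rho\rho}\big)_{\rho\rho}$.

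The main obstacle I anticipate is not the reductions themselves but the normalisation bookkeeping: one must be scrupulous about the convention for the trivalent vertex (hence the powers of $d_\rho$ and the precise form of $\theta(\rho,\rho,\rho)$), since it is exactly this choice that repackages the gauge-invariant number $t\,d_\rho/\theta$ into the advertised $t/\sqrt{d_\rho}$. In the algebraic route the corresponding difficulty is solving the resulting quadratic and selecting the correct root, for which the independently known value of $t$ (or a positivity/consistency argument) is needed.
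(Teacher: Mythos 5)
Your first (tetrahedral) route is correct and rests on exactly the same key input as the paper's own proof: the trivalent-category relation ``triangle $=t\cdot$ vertex'' from \cite{MPS17} together with the bigon normalisation $b=\sqrt{d_\rho}$. The difference is packaging. The paper never invokes the recoupling formula ``$F$-symbol $=$ tetrahedron over norms'': instead it appends a unit leg so that both sides of the triangle relation become vectors in $V_\rho^{\rho\rho1}$ (\cref{Eq:t}), re-runs the open-diagram $F$-move computation from the proof of \cref{thm:addeq1} to expand the triangle tree as $\sum_x (F_\rho^{\rho\rho1})^*_{x\rho}(F_x^{\rho\rho\rho})_{\rho\rho}\sqrt{d_\rho}$ times the basis trees, notes that only $x=\rho$ survives (since $V_\rho^{x1}=0$ otherwise) and that $F_\rho^{\rho\rho1}=1$ by \cref{thm:triv_1dim}, and compares coefficients to obtain the linear relation $(F_\rho^{\rho\rho\rho})_{\rho\rho}\sqrt{d_\rho}=t$. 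Your closed-diagram version, $(F_\rho^{\rho\rho\rho})_{\rho\rho}=\mathrm{Tet}/\lVert\text{tree}\rVert^2$ with $\mathrm{Tet}=t\,\theta$, $\theta=d_\rho^{3/2}$ and $\lVert\text{tree}\rVert^2=\theta^2/d_\rho$, yields the same number and is arguably the more standard skein-theoretic argument; what it costs is that the identity $F=\mathrm{Tet}/\text{norms}$ is not proved anywhere in the paper, so to be self-contained you would have to derive it from the orthogonality you invoke plus the bigon rule \cref{eq:bigon_norm}, whereas the paper's route reuses only results it has already established. Both arguments are linear in the unknown, with no quadratic and no root selection.

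Your second route, however, has a genuine gap. The $x=\rho$ comparison does give $\sqrt{d_\rho}\,(F_\rho^{\rho\rho\rho})_{\rho\rho}^2=c_1(F_\rho^{\rho\rho\rho})_{\rho1}+c_2+c_2(F_\rho^{\rho\rho\rho})_{\rho\rho}$, and the $x=1$ comparison gives $\sqrt{d_\rho}\,(F_\rho^{\rho\rho\rho})_{\rho1}=c_1\bigl(1+(F_\rho^{\rho\rho\rho})_{11}\bigr)+c_2(F_\rho^{\rho\rho\rho})_{1\rho}$. But this is two equations in four unknowns, and the entries $(F_\rho^{\rho\rho\rho})_{\rho1}$, $(F_\rho^{\rho\rho\rho})_{1\rho}$, $(F_\rho^{\rho\rho\rho})_{11}$ are \emph{not} supplied by \cref{thm:triv_1dim,thm:eqto1_withlabels}: those theorems fix whole $F$-matrices carrying a unit \emph{external} label, not matrix elements of the four-dimensional $F_\rho^{\rho\rho\rho}$ in the unit \emph{internal} channel. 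Closing the system needs extra input --- e.g.\ unitarity of the full $4\times4$ matrix, or the bigon/loop evaluations giving $(F_\rho^{\rho\rho\rho})_{\rho1}=1/\sqrt{d_\rho}$ and $(F_\rho^{\rho\rho\rho})_{11}=1/d_\rho$ in this normalisation --- plus a choice of root for the quadratic, which you admit requires knowing $t$ independently. Note also that, exactly as the paper remarks for \cref{eq_addtriv}, equations derived from the square relation \cref{eq:SquarePop} are not gauge invariant (unlike $(F_\rho^{\rho\rho\rho})_{\rho\rho}$ itself), so they can only be used after the relevant gauge factors are fixed. In short, the square relation alone cannot pin down $(F_\rho^{\rho\rho\rho})_{\rho\rho}$; it is the \emph{triangle} relation that does the work, which is precisely why the paper uses it here.
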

\begin{proof}
	From~\cite{MPS17}, we have the following relation:
	\begin{equation}
	\begin{tikzpicture}[scale=.3,baseline=(current bounding box.center)]
	\draw (0,0) -- (0,1);
	\draw (0,1) to [bend left] (-1.5,3.5);
	\draw (0,1) to [bend right] (1.5,3.5);
	\draw (-1.5,3.5) to [bend left] (1.5,3.5);
	\draw (-1.5,3.5) to (-2.5,4.5);
	\draw (1.5,3.5) to (2.5,4.5);
	\end{tikzpicture}
	= t\ 
	\begin{tikzpicture}[scale=.3,baseline=(current bounding box.center)]
	\draw (0,0) -- (0,2);
	\draw (0,2) -- (-2,4);
	\draw (0,2) -- (2,4);
	\end{tikzpicture}.
	\end{equation}
	The absence of labels in the strings indicates that every string stands for the generating object of the category, $\rho$ in our case. Since in this category, these diagrams are invariant under rotation and deforming, and we can add and remove strings with the unit label on it, we can translate these vectors into vectors that live in the space
	\begin{equation}
	V_\rho^{\rho\rho1}=\bigoplus_{x_1,x_2,x_3,x_4}V_\rho^{x_3 x_4}\otimes V_{x_3}^{\rho x_1}\otimes V_{x_4}^{x_2 1}\otimes V_{x_1 x_2}^{\rho}.
	\end{equation}
	The whole relation can now be expressed as an equation in this vector space:
	\begin{equation}
	\begin{tikzpicture}[scale=.5,baseline=(current bounding box.center)]
	\draw (0,0) -- (0,1);
	\draw (0,1) -- (-1,2);
	\draw (0,1) -- (1,2);
	\draw (-1,2) -- (-2,3);
	\draw (-1,2) -- (0,3);
	\draw (1,2) -- (0,3);
	\draw (-2,3) -- (-2,4);
	\draw (0,3) -- (0,4);
	\draw (1,2) -- (2,3);
	\draw (2,3) -- (2,4);
	\node (1) at (0,-0.5) {$\rho$};
	\node (2) at (-2,4.5) {$\rho$};
	\node (3) at (0,4.5) {$\rho$};
	\node (4) at (-0.7,1.2) {$\rho$};
	\node (5) at (0.7,1.2) {$\rho$};
	\node (6) at (-0.7,2.7) {$\rho$};
	\node (7) at (0.7,2.7) {$\rho$};
	\node (8) at (2,4.5) {$1$};
	\end{tikzpicture}
	=t\ 
	\begin{tikzpicture}[scale=.5,baseline=(current bounding box.center)]
	\node (u) at (6.5,-0.5) {$\rho$};
	\node (a) at (4.5,3.5) {$\rho$};
	\node (b) at (6.5,3.5) {$\rho$};
	\node (c) at (8.5,3.5) {$1$};
	\draw (u) -- (6.5,1);
	\draw (6.5,1) -- (8.5,3);
	\draw (5.5,2) -- (6.5,3);
	\draw (5.5,2) -- (4.5,3);
	\draw (6.5,1) -- (5.5,2) node[pos=0.4,left] {$\rho\ $};
	\end{tikzpicture}.
	\label{Eq:t}
	\end{equation}
	Using the relations found in the proof of Theorem~\ref{thm:addeq1}, we find that
	\begin{equation}
	\begin{tikzpicture}[scale=.5,baseline=(current bounding box.center)]
	\draw (0,0) -- (0,1);
	\draw (0,1) -- (-1,2);
	\draw (0,1) -- (1,2);
	\draw (-1,2) -- (-2,3);
	\draw (-1,2) -- (0,3);
	\draw (1,2) -- (0,3);
	\draw (-2,3) -- (-2,4);
	\draw (0,3) -- (0,4);
	\draw (1,2) -- (2,3);
	\draw (2,3) -- (2,4);
	\node (1) at (0,-0.5) {$\rho$};
	\node (2) at (-2,4.5) {$\rho$};
	\node (3) at (0,4.5) {$\rho$};
	\node (4) at (-0.7,1.2) {$\rho$};
	\node (5) at (0.7,1.2) {$\rho$};
	\node (6) at (-0.7,2.7) {$\rho$};
	\node (7) at (0.7,2.7) {$\rho$};
	\node (8) at (2,4.5) {$1$};
	\end{tikzpicture}
	=\sum_{x} \big(F_\rho^{\rho\rho1}\big)^*_{x \rho}\ \big(F_{x}^{\rho\rho\rho}\big)_{\rho\rho}\ \sqrt{d_\rho}\ 
	\begin{tikzpicture}[scale=.5,baseline=(current bounding box.center)]
	\node (u) at (6.5,-0.5) {$\rho$};
	\node (a) at (4.5,3.5) {$\rho$};
	\node (b) at (6.5,3.5) {$\rho$};
	\node (c) at (8.5,3.5) {$1$};
	\draw (u) -- (6.5,1);
	\draw (6.5,1) -- (8.5,3);
	\draw (5.5,2) -- (6.5,3);
	\draw (5.5,2) -- (4.5,3);
	\draw (6.5,1) -- (5.5,2) node[pos=0.4,left] {$x\ $};
	\end{tikzpicture}.
	\end{equation}
	The vector on the right hand side of the equation is an element of the vector space $\bigoplus_x V_x^{\rho\rho}\otimes V_\rho^{x1}$ which is only non-zero for $x=\rho$. Additionally, we know from Theorem \ref{thm:triv_1dim} that $F_\rho^{\rho\rho1}=1$. Hence, the equation is reduced to
	\begin{equation}
	\begin{tikzpicture}[scale=.5,baseline=(current bounding box.center)]
	\draw (0,0) -- (0,1);
	\draw (0,1) -- (-1,2);
	\draw (0,1) -- (1,2);
	\draw (-1,2) -- (-2,3);
	\draw (-1,2) -- (0,3);
	\draw (1,2) -- (0,3);
	\draw (-2,3) -- (-2,4);
	\draw (0,3) -- (0,4);
	\draw (1,2) -- (2,3);
	\draw (2,3) -- (2,4);
	\node (1) at (0,-0.5) {$\rho$};
	\node (2) at (-2,4.5) {$\rho$};
	\node (3) at (0,4.5) {$\rho$};
	\node (4) at (-0.7,1.2) {$\rho$};
	\node (5) at (0.7,1.2) {$\rho$};
	\node (6) at (-0.7,2.7) {$\rho$};
	\node (7) at (0.7,2.7) {$\rho$};
	\node (8) at (2,4.5) {$1$};
	\end{tikzpicture}
	=\big(F_\rho^{\rho\rho\rho}\big)_{\rho \rho}\ \sqrt{d_\rho}
	\begin{tikzpicture}[scale=.5,baseline=(current bounding box.center)]
	\node (u) at (6.5,-0.5) {$\rho$};
	\node (a) at (4.5,3.5) {$\rho$};
	\node (b) at (6.5,3.5) {$\rho$};
	\node (c) at (8.5,3.5) {$1$};
	\draw (u) -- (6.5,1);
	\draw (6.5,1) -- (8.5,3);
	\draw (5.5,2) -- (6.5,3);
	\draw (5.5,2) -- (4.5,3);
	\draw (6.5,1) -- (5.5,2) node[pos=0.4,left] {$\rho\ $};
	\end{tikzpicture}.
	\end{equation}
	Comparing this result with \cref{Eq:t} yields the statement.  
\end{proof}

\subsection{Unitarity}

Since the $F$-matrices are basis transformations, we require them to be unitary. Hence, we can use the unitarity condition
\begin{equation}
U^*U=UU^*=\mathbb{I}
\end{equation}
to get additional equations for the variables. This is especially helpful in case we have already determined the majority of variables for one matrix: by using unitarity, it is likely that we can also determine the rest of them. For a one-dimensional matrix $F_u^{abc}$, unitarity implies
\begin{equation}
\left\lVert F_u^{abc}\right\rVert^2=1.
\end{equation}

\section{Gauge freedom}
\label{sec:6_gauge}

As explained in \cite{Bonderson}, there is a gauge freedom assigned to each distinct vertex that amounts to the choice of basis vertex. For $\psi\in V_c^{ab}$, let $u_c^{ab}$ be the invertible change of basis transformation for this space, i.e.\
	\begin{equation}
		\psi=u_c^{ab}\psi'.
	\end{equation}
For an element of an $F$-matrix, this takes the form
	\begin{equation}
	\label{eq:gauge}
		\left(F_d^{abc}\right)_{fe}'=\frac{u_d^{af} u_f^{bc}}{u_e^{ab} u_d^{ec}}\left(F_d^{abc}\right)_{fe}.
	\end{equation}
The corresponding gauge transformation for the adjoint $F$-matrix is
	\begin{equation}
	\label{eq:adjgauge}
		\left(\left[F_d^{abc}\right]^*\right)_{fe}'=\frac{u_e^{ab} u_d^{ec}}{u_d^{af} u_f^{bc}}\left(F_d^{abc}\right)_{fe}.
	\end{equation}

The strategy we pursue for fixing the gauge is as follows: We start by fixing the ratios to $1$ for those variables that have already been determined by one of the theorems we have shown above. We leave the remaining freedoms unfixed until we get to a point within the set of equations where fixing one freedom helps determining a variety of $F$-symbols.

\begin{thm}
	The pentagon equations \cref{eq:pentagon}, the equations following from the triangle equation \cref{eq:triangle} and the additional equations \cref{eq:addeq} are invariant under the gauge transformations \cref{eq:gauge,eq:adjgauge}.
\end{thm}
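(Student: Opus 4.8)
The plan is to substitute the gauge-transformed symbols prescribed by \cref{eq:gauge,eq:adjgauge} into each of \cref{eq:pentagon,eq:triangle,eq:addeq} and verify that every factor $u_c^{ab}$ cancels. The unifying observation I would lean on is that each matrix element $\left(F_d^{abc}\right)_{fe}$ is the change-of-basis coefficient between two trivalent trees on the same boundary, and a gauge transformation rescales each tree by the product of the factors attached to its two trivalent vertices. Hence $\left(F_d^{abc}\right)_{fe}$ picks up the ratio of the $u$-factors of the target tree (vertices $V_d^{af}$, $V_f^{bc}$) to those of the source tree (vertices $V_e^{ab}$, $V_d^{ec}$), exactly as recorded in \cref{eq:gauge}, while the adjoint, representing the inverse change of basis, picks up the reciprocal ratio as in \cref{eq:adjgauge}. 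Invariance then reduces to showing that these ratios telescope.

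For the pentagon \cref{eq:pentagon} I would write out the two ratios produced on the left and the three produced on the right. On the left the shared factor $u_u^{ac}$ cancels, leaving $\tfrac{u_u^{xd}\,u_d^{yc}\,u_c^{zw}}{u_a^{xy}\,u_b^{az}\,u_u^{bw}}$. On the right the internal factors $u_d^{ew}$, $u_e^{yz}$ and $u_b^{xe}$ --- precisely those carrying the summation label $e$ --- cancel pairwise among the three ratios, leaving the identical expression. Because this common factor is independent of $e$, it pulls out of the sum on the right; dividing both sides by it recovers \cref{eq:pentagon} verbatim.

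For the triangle relations \cref{eq:triangle} I would use that the relevant spaces are one-dimensional, so the intermediate labels are forced: in $F_z^{1xy}$ the source label must be $x$ and the target label $z$, and after cancelling $u_z^{xy}$ its gauge ratio collapses to $u_z^{1z}/u_x^{1x}$. The partner $F_x^{1zy}$ then contributes the reciprocal $u_x^{1x}/u_z^{1z}$, so the product of the two unit-leg ratios is $1$ and the relation is preserved; the second line of \cref{eq:triangle} is treated identically with the right unit legs.

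The additional equations \cref{eq:addeq} are the step I expect to be the main obstacle, since they mix $F$ and $F^*$ (so both \cref{eq:gauge,eq:adjgauge} are needed) and, unlike the pentagon, carry the summation label $x_3'$ inside several $u$-factors whose cancellation is not manifest. After attempting the brute-force bookkeeping, the clean resolution is conceptual: \cref{eq:addeq} was obtained in \cref{thm:addeq1} as two different $F$-move routes between one fixed source decomposition of $V_u^{abc}$ and one fixed target basis (the one indexed by $y$). By the change-of-basis principle above, each route multiplies the coefficient by the ratio of the target-tree $u$-factors to the source-tree $u$-factors, a quantity fixed entirely by the labels occurring in the source and target. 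Since $x_3'$ is an internal summed label appearing in neither, it cannot survive in this ratio; the per-term factor on the left is therefore automatically $x_3'$-independent, factors out of the sum, and equals the factor on the right. This shows that both sides of \cref{eq:addeq} scale by the same quantity, which cancels, giving invariance.
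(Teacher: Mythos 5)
Your treatment of the pentagon and triangle equations is correct and is essentially the paper's own proof: substitute \cref{eq:gauge,eq:adjgauge}, observe that on the left of \cref{eq:pentagon} the shared factor $u_u^{ac}$ cancels while on the right the $e$-dependent factors $u_e^{yz}$, $u_d^{ew}$, $u_b^{xe}$ cancel pairwise among the three ratios, so that a common, $e$-independent prefactor pulls out of the sum and drops; and for \cref{eq:triangle} use one-dimensionality to collapse each ratio to $u_z^{1z}/u_x^{1x}$ and its reciprocal. Both of these match the paper line for line.

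For the additional equations \cref{eq:addeq} you diverge from the paper, which simply carries out the brute-force substitution you set aside: writing the five gauge factors, the $x_3'$-dependent ones cancel in pairs ($u_u^{a x_3'}$ between $F_u^{ax_1x_4}$ and $(F_u^{abc})^*$, $u_{x_3'}^{x_1x_4}$ between $F_u^{ax_1x_4}$ and $(F_{x_3'}^{x_1x_2c})^*$, and $u_{x_3'}^{bc}$ between $(F_{x_3'}^{x_1x_2c})^*$ and $(F_u^{abc})^*$), leaving the factor $u_b^{x_1x_2}u_y^{ab}u_u^{yc}\big/\big(u_{x_3}^{ax_1}u_u^{x_3x_4}u_{x_4}^{x_2c}\big)$ on both sides --- a two-line check. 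Your telescoping principle correctly predicts exactly this cancellation pattern, but as stated it has a genuine soft spot: the two ``routes'' of \cref{thm:addeq1} are not composed of $F$-moves alone. Each route contains a bigon evaluation, carrying the factor $\sqrt{d_{x_1}d_{x_2}/d_b}\,\delta_{b,x_4'}$ and consuming the fusion vertex $V_{x_1x_2}^{b}$, whose transformation behaviour is prescribed by neither \cref{eq:gauge} nor \cref{eq:adjgauge}; consequently the per-route factor is not literally a ratio of target-tree to source-tree $u$-factors (an orphaned $u_b^{x_1x_2}$ from the destroyed splitting vertex survives in it). You must either argue that this non-$F$-move step contributes the same factor to both routes, so that it cancels, or fall back on the direct substitution. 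That this is not pedantry is shown by the paper's own remark immediately following the theorem: \cref{eq_addtriv}, likewise obtained by equating two evaluations of one diagram (one of them via the skein relation \cref{eq:SquarePop}), is \emph{not} gauge invariant, precisely because its non-$F$-move ingredient is tied to a fixed vertex normalisation. So a blanket appeal to ``two routes between the same source and target pick up the same factor'' proves too much; the paper's explicit computation is what certifies that, for \cref{eq:addeq} specifically, everything that should cancel does.
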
		
\begin{proof} We prove gauge invariance for each of the equations separately:
	\begin{enumerate}
		\item The pentagon equations: under a gauge transformation, the pentagon equations \cref{eq:pentagon} are
				\begin{align}
					\frac{u_u^{xd}u_d^{yc}}{u_a^{xy}u_u^{ac}}\left(F_u^{xyc}\right)_{da}\frac{u_u^{ac}u_c^{zw}}{u_b^{az}u_u^{bw}}\left(F_u^{azw}\right)_{cb}&=\sum_e\frac{u_d^{yc}u_c^{zw}}{u_e^{yz}u_d^{ew}}\left(F_d^{yzw}\right)_{ce}\frac{u_u^{xd}u_d^{bw}}{u_b^{xe}u_u^{bw}}\left(F_u^{xew}\right)_{db}\frac{u_b^{xe}u_e^{yz}}{u_a^{xy}u_b^{az}}\left(F_b^{xyz}\right)_{ea}\\
					\frac{u_u^{xd}u_d^{yc}u_u^{ac}u_c^{zw}}{u_a^{xy}u_u^{ac}u_b^{az}u_u^{bw}}\left(F_u^{xyc}\right)_{da}\left(F_u^{azw}\right)_{cb}&=\frac{u_d^{yc}u_c^{zw}u_u^{xd}u_d^{bw}}{u_b^{xe}u_u^{bw}u_a^{xy}u_b^{az}}\ \sum_e\left(F_d^{yzw}\right)_{ce}\left(F_u^{xew}\right)_{db}\left(F_b^{xyz}\right)_{ea}\\
					\left(F_u^{xyc}\right)_{da}\left(F_u^{azw}\right)_{cb}&=\sum_e\left(F_d^{yzw}\right)_{ce}\left(F_u^{xew}\right)_{db}\left(F_b^{xyz}\right)_{ea},
				\end{align}
			therefore it is gauge invariant.
		\item The triangle equation: under a gauge transformation, the first equation in \cref{eq:triangle} has the form 
				\begin{align}
					\frac{u_z^{1 z} u_z^{xy}}{u_x^{1 x} u_z^{xy}} \left(F_z^{1 x y}\right)_{zx}\ \frac{u_x^{1 x} u_x^{zy}}{u_z^{1 z} u_x^{zy}} \left(F_x^{1 z y}\right)_{xz}&=1\\
					\left(F_z^{1 x y}\right)_{zx}\ \left(F_x^{1 z y}\right)_{xz}&=1,
				\end{align}
			hence it is gauge invariant. This works analogously for the second equation:
				\begin{align}
					\frac{u_x^{yz} u_y^{y1}}{u_x^{yx} u_z^{z1}} \left(F_z^{xy 1}\right)_{yz}\ \frac{u_y^{xz} u_z^{z1}}{u_y^{xz} u_y^{y 1}} \left(F_y^{xz1}\right)_{zy}&=1\\
					\left(F_z^{xy 1}\right)_{yz}\ \left(F_y^{xz1}\right)_{zy}&=1.
				\end{align}
		\item Additional equations: under a gauge transformation, \cref{eq:addeq} becomes
				\begin{align}
					\sum_{x_3'}\frac{u_u^{a x_3'} u_{x_3'}^{x_1 x_4}}{u_{x_3}^{a x_1} u_u^{x_3 x_4}}&\big(F_u^{ax_1 x_4}\big)_{x_3' x_3}\ \frac{u_b^{x_1 x_2} u_{x_3'}^{bc}}{u_{x_3'}^{x_1 x_4} u_{x_4}^{x_2 c}}\big(F_{x_3'}^{x_1 x_2 c}\big)^*_{b x_4}\ \frac{u_y^{ab} u_u^{yc}}{u_u^{a x_3'} u_{x_3'}^{bc}}\big(F_u^{abc}\big)^*_{y x_3'}\\
					&=\frac{u_y^{x_3 x_2} u_u^{yc}}{u_u^{x_3 x_4} u_{x_4}^{x_2 c}}\big(F_u^{x_3 x_2 c}\big)^*_{y x_4}\ \frac{u_y^{ab} u_b^{x_1 x_2}}{u_{x_3}^{a x_1} u_y^{x_3 x_2}}\big(F_{y}^{a x_1 x_2}\big)_{b x_3},
				\end{align}
				which yields 
				\begin{align}
					\frac{u_b^{x_1 x_2} u_y^{ab} u_u^{yc}}{u_{x_3}^{a x_1} u_u^{x_3 x_4}u_{x_4}^{x_2 c}}\sum_{x_3'}\big(F_u^{ax_1 x_4}\big)_{x_3' x_3}\big(F_{x_3'}^{x_1 x_2 c}\big)^*_{b x_4}\big(F_u^{abc}\big)^*_{y x_3'}&=\frac{u_u^{yc}u_y^{ab} u_b^{x_1 x_2}}{u_u^{x_3 x_4} u_{x_4}^{x_2 c}u_{x_3}^{a x_1}}\big(F_u^{x_3 x_2 c}\big)^*_{y x_4}\big(F_{y}^{a x_1 x_2}\big)_{b x_3}\\
					\sum_{x_3'}\big(F_u^{ax_1 x_4}\big)_{x_3' x_3}\big(F_{x_3'}^{x_1 x_2 c}\big)^*_{b x_4}\big(F_u^{abc}\big)^*_{y x_3'}&=\big(F_u^{x_3 x_2 c}\big)^*_{y x_4}\big(F_{y}^{a x_1 x_2}\big)_{b x_3}.\qedhere
				\end{align}
	\end{enumerate}
\end{proof}
\begin{rem}
	Note that \cref{eq_addtriv} is not invariant under the gauge transformations \cref{eq:gauge,eq:adjgauge}. Therefore, we have to keep track of the occurring gauge factors and adjust this equation whenever one of the factors is fixed.
\end{rem}


\section{Conclusion}
\label{sec:7_conclusion}

In this paper, we have presented a solution for the $F$-symbols of the $\mathcal{H}_3$ fusion category. The methods we have used can easily be generalised to other fusion categories that provide some skein-theoretic information, which is especially helpful in cases where the number of equations and unknowns is beyond what can be solved easily with current state-of-the-art technology and algorithms. An interesting candidate for this is the extended Haagerup subfactor, where the $F$-symbols could not be determined so far.

Furthermore, there are several interesting tasks that can be done now that the $F$-symbols for this category are known. One example is the Levin-Wen string-net model \cite{LevinWen}, whose construction requires a solution for the $F$-symbols and which yields interesting insights into the nature of the corresponding CFT. It is now also possible to construct the quantum double of $\mathcal{H}_3$ (see, e.g., \cite{Mue2}), which is, in contrast to $\mathcal{H}_3$, a braided tensor category.

\section*{Acknowledgements}
Helpful correspondence and discussions with Andreas Bluhm, Markus Duwe, Terry Farrelly, Alexander Hahn, Nathan McMahon, Scott Morrison, and Noah Snyder are gratefully acknowledged.
This work was supported by the DFG through SFB 1227 (DQ-mat), the RTG 1991, and the cluster of excellence EXC 2123 QuantumFrontiers.


\bibliographystyle{alpha}
\bibliography{lit}


\appendix

\section{Diagrammatical calculus of trivalent categories}
\label{app:TriCats}

\subsection{Evaluating diagrams}
\label{app:A_DiagramCalc}
We will now explain how to simplify and evaluate trivalent diagrams. For diagrams with up to three boundary points, the definition of a trivalent category directly provides the respective simplification rules.

Because of $\dim\mathcal{C}_0=1$, every diagram with no boundary points is a scalar multiple of the empty diagram. Therefore, the loop is a non-zero complex number which we denote $d$, i.e.
\begin{equation}
\begin{tikzpicture}[scale=.8,baseline=(current bounding box.center)]
\draw (0,0) circle (0.6cm);
\end{tikzpicture}
=d.
\end{equation}
In $\mathcal{H}_3$, $d=d_\rho=\frac{3+\sqrt{13}}{2}$. From $\dim\mathcal{C}_1=0$, it follows that
\begin{equation}
\begin{tikzpicture}[scale=.8,baseline=(current bounding box.center)]
\draw (0,0) circle (0.4cm);
\draw (0,-0.4) -- (0,-1);
\end{tikzpicture}
=0.
\end{equation}
Furthermore, $\dim\mathcal{C}_2=0$ implies 
\begin{equation}
\label{eq:bigon}
\begin{tikzpicture}[scale=.8,baseline=(current bounding box.center)]
\draw (0,0) -- (0,0.5);
\draw (0,0.5) to [bend left=70] (0,1.5);
\draw (0,0.5) to [bend right=70] (0,1.5);
\draw (0,1.5) -- (0,2);
\end{tikzpicture}
=b\ \ \begin{tikzpicture}[scale=.8,baseline=(current bounding box.center)]
\draw (0,0) -- (0,2);
\end{tikzpicture}
\end{equation}
where $b$ is another non-zero complex constant. Because one can always rescale the trivalent vertex by a constant, we can, without loss of generality, fix the value of $b$. For our computations, we use $b=\sqrt{d_\rho}$, which has the reason that in the full fusion category, we have the relation
\begin{equation}
\label{eq:bigon_norm}
\begin{tikzpicture}[scale=.9,baseline=(current bounding box.center)]
\draw (0,0) -- (0,0.5);
\draw (0,0.5) to [bend left=70] (0,1.5);
\draw (0,0.5) to [bend right=70] (0,1.5);
\draw (0,1.5) -- (0,2);
\node at (-0.2,0.2) {$c$};
\node at (-0.2,1.8) {$c'$};
\node at (-0.5,1) {$a$};
\node at (0.5,1) {$b$};
\end{tikzpicture}
=\delta_{cc'}\sqrt{\frac{d_a d_b}{d_c}}\ \ \begin{tikzpicture}[scale=.8,baseline=(current bounding box.center)]
\draw (0,0) -- (0,2);
\node at (0.2,0.3) {$c$};
\end{tikzpicture}.
\end{equation}
When we choose $c=c'=a=b=\rho$, we see that $b=\sqrt{d_\rho}$ for \cref{eq:bigon} and \cref{eq:bigon_norm} to be equal. Finally, $\dim\mathcal{C}_3=1$ means that
\begin{equation}
\begin{tikzpicture}[scale=.3,baseline=(current bounding box.center)]
\draw (0,0) -- (0,1);
\draw (0,1) to [bend left] (-1.5,3.5);
\draw (0,1) to [bend right] (1.5,3.5);
\draw (-1.5,3.5) to [bend left] (1.5,3.5);
\draw (-1.5,3.5) to (-2.5,4.5);
\draw (1.5,3.5) to (2.5,4.5);
\end{tikzpicture}
= t\ 
\begin{tikzpicture}[scale=.3,baseline=(current bounding box.center)]
\draw (0,0) -- (0,2);
\draw (0,2) -- (-2,4);
\draw (0,2) -- (2,4);
\end{tikzpicture},
\end{equation}
where $t$, in general, is a complex number that can be zero. In $\mathcal{H}_3$, $t=\left(-\frac{2}{3} d_\rho+\frac{5}{3}\right)\sqrt{d_\rho}$.

\subsection{Square popping relation}
\label{app:A_SquarePop}
Here, we derive the square popping relation \cref{eq:SquarePop} for the case $b\neq1$. In \cite{MPS17}, the calculation was only done for $b=1$, so we will do it for the general case in the following. 

To calculate relations between diagrams in $\mathcal{C}_4$, we need an orthonormal basis for this space, which we will obtain using the Gram-Schmidt process. We begin with the following (non-orthonormal) basis for $\mathcal{C}_4$:
	\begin{align}
		w_1&=
		\begin{tikzpicture}[scale=.7,baseline=(current bounding box.center)]
			\draw (0,0) arc (-180:0:1);
			\draw (0.5,0) arc (-180:0:0.5);
		\end{tikzpicture}
		&&w_2=\begin{tikzpicture}[scale=.7,baseline=(current bounding box.center)]
			\draw (0,0) arc (-180:0:0.4);
			\draw (1.2,0) arc (-180:0:0.4);
		\end{tikzpicture}\\
		w_3&=\begin{tikzpicture}[scale=.7,baseline=(current bounding box.center)]
			\draw (0,0) arc (-180:0:0.4);
			\draw (1.2,0) arc (-180:0:0.4);
			\draw (0.4,-0.4) arc (-180:0:0.6);
		\end{tikzpicture}
		&&w_4=
		\begin{tikzpicture}[scale=.7,baseline=(current bounding box.center)]
			\draw (0,0) arc (-180:0:1);
			\draw (0.5,0) arc (-180:0:0.5);
			\draw (1,-1) -- (1,-0.5);
		\end{tikzpicture}
	\end{align}
	Using the Gram-Schmidt process, we can find a matrix $\Theta$ such that the orthonormal basis vectors $v_1,\dots,v_4$ are of the form
		\begin{equation}
			v_i=\sum_k \Theta_{ik} w_k.
		\end{equation}
	This matrix is of the following form:
		\begin{equation}\Theta=
			\begin{pmatrix}
				\frac{1}{d} & 0 & 0 & 0 \\
				-\frac{1}{d\sqrt{d^2-1}} & \frac{1}{\sqrt{d^2-1}} & 0 & 0 \\
				-\frac{b \sqrt{d}}{\sqrt{(d^2-a)(d^2-d-)}} & \frac{b}{\sqrt{(d^2-a)(d^2-d-)}} & \sqrt{\frac{d^2-1}{b^2d(d^2-d-1)}} & 0 \\
				\frac{b+dt}{d^2-d-1} C_\Theta & \frac{b-bd-t}{d^2-d-1} C_\Theta & \frac{-d^2t+t-b}{b(d^2-d-1)} C_\Theta & C_\Theta
			\end{pmatrix}
		\end{equation}
	with
		\begin{equation}
			C_\Theta=\sqrt{\frac{d^2-d-1}{d(b^2d(d-2)-2bt-(d^2-1)t^2}}.
		\end{equation}
	Using this orthonormal basis, we can calculate the coefficients of the square in this basis, hence we can also express the square in terms of the original basis $w_1,\dots,w_4$. The resulting formula is
		\begin{equation}
			\begin{tikzpicture}[scale=.7,baseline=(current bounding box.center)]
				\draw (0,0) arc (-180:0:1);
				\draw (0.5,0) arc (-180:0:0.5);
				\draw (0.6,-0.3) -- (0.35,-0.75);
				\draw (1.4,-0.3) -- (1.65,-0.75);
			\end{tikzpicture}=\frac{b(b^2+bt-t^2)}{bd+t+dt}\left(
			\begin{tikzpicture}[scale=.7,baseline=(current bounding box.center)]
				\draw (0,0) arc (-180:0:1);
				\draw (0.5,0) arc (-180:0:0.5);
			\end{tikzpicture}+
			\begin{tikzpicture}[scale=.7,baseline=(current bounding box.center)]
				\draw (0,0) arc (-180:0:0.4);
				\draw (1.2,0) arc (-180:0:0.4);
			\end{tikzpicture}\right)
			+\frac{t^2(d+1)-b^2}{bd+t+dt}\left(
			\begin{tikzpicture}[scale=.7,baseline=(current bounding 	box.center)]
				\draw (0,0) arc (-180:0:0.4);
				\draw (1.2,0) arc (-180:0:0.4);
				\draw (0.4,-0.4) arc (-180:0:0.6);
			\end{tikzpicture}+
			\begin{tikzpicture}[scale=.7,baseline=(current bounding box.center)]
				\draw (0,0) arc (-180:0:1);
				\draw (0.5,0) arc (-180:0:0.5);
				\draw (1,-1) -- (1,-0.5);
			\end{tikzpicture}\right).
		\end{equation}
The source code for the calculations can be found in \cite{Sti18b,Sti18a}.

\section{$F$-symbols for the $\mathcal{H}_3$ fusion category}
\label{app:Fsymbols}
This solution and the code which was used to calculate it can be found under \url{https://github.com/R8monaW/H3Fsymbols}. It is a real solution with two parameters, $p_1,p_2\in\{-1,+1\}$. Additionally, we use the following notation:
	\begin{align}
		A&=\frac{1}{2} \left(\sqrt{13}-3\right),\\
		B&=\frac{1}{3} \left(\sqrt{13}-2\right),\\
		C&=\frac{1}{6} \left(\sqrt{13}+1\right),\\
		D_{\pm}&=\frac{1}{12} \left(5-\sqrt{13}\pm\sqrt{6 \left(\sqrt{13}+1\right)}\right).
	\end{align}
In the following, all $F$-symbols are listed. They are first ordered by their dimension and within that, they are ordered by their labels. The $3$- and $4$-dimensional matrices are depicted as tables where the first row and the first column indicate the admissible labels for the matrix.
\subsection{$1$-dimensional $F$-symbols}
{\allowdisplaybreaks\begin{align}
F_{1}^{111}&=1&F_{1}^{1\alpha\alpha^*}&=1&F_{1}^{1\alpha^*\alpha}&=1&F_{1}^{1\rho\rho}&=1\\F_{1}^{1{}_\alpha\rho{}_\alpha\rho}&=1&F_{1}^{1\alphastarrho\alphastarrho}&=1&F_{1}^{\alpha1\alpha^*}&=1&F_{1}^{\alpha\alpha\alpha}&=1\\F_{1}^{\alpha\alpha^*1}&=1&F_{1}^{\alpha\rho{}_\alpha\rho}&=1&F_{1}^{\alpha{}_\alpha\rho\alphastarrho}&=-p_1&F_{1}^{\alpha\alphastarrho\rho}&=1\\F_{1}^{\alpha^*1\alpha}&=1&F_{1}^{\alpha^*\alpha1}&=1&F_{1}^{\alpha^*\alpha^*\alpha^*}&=1&F_{1}^{\alpha^*\rho\alphastarrho}&=1\\F_{1}^{\alpha^*{}_\alpha\rho\rho}&=1&F_{1}^{\alpha^*\alphastarrho{}_\alpha\rho}&=-p_1&F_{1}^{\rho1\rho}&=1&F_{1}^{\rho\alpha\alphastarrho}&=1\\F_{1}^{\rho\alpha^*{}_\alpha\rho}&=1&F_{1}^{\rho\rho1}&=1&F_{1}^{\rho\rho\rho}&=1&F_{1}^{\rho\rho{}_\alpha\rho}&=1\\F_{1}^{\rho\rho\alphastarrho}&=1&F_{1}^{\rho{}_\alpha\rho\alpha}&=1&F_{1}^{\rho{}_\alpha\rho\rho}&=1&F_{1}^{\rho{}_\alpha\rho{}_\alpha\rho}&=1\\F_{1}^{\rho{}_\alpha\rho\alphastarrho}&=-1&F_{1}^{\rho\alphastarrho\alpha^*}&=1&F_{1}^{\rho\alphastarrho\rho}&=1&F_{1}^{\rho\alphastarrho{}_\alpha\rho}&=p_1p_2\\F_{1}^{\rho\alphastarrho\alphastarrho}&=-p_1p_2&F_{1}^{{}_\alpha\rho1{}_\alpha\rho}&=1&F_{1}^{{}_\alpha\rho\alpha\rho}&=1&F_{1}^{{}_\alpha\rho\alpha^*\alphastarrho}&=1\\F_{1}^{{}_\alpha\rho\rho\alpha^*}&=1&F_{1}^{{}_\alpha\rho\rho\rho}&=1&F_{1}^{{}_\alpha\rho\rho{}_\alpha\rho}&=1&F_{1}^{{}_\alpha\rho\rho\alphastarrho}&=p_2\\F_{1}^{{}_\alpha\rho{}_\alpha\rho1}&=1&F_{1}^{{}_\alpha\rho{}_\alpha\rho\rho}&=1&F_{1}^{{}_\alpha\rho{}_\alpha\rho{}_\alpha\rho}&=1&F_{1}^{{}_\alpha\rho{}_\alpha\rho\alphastarrho}&=-1\\F_{1}^{{}_\alpha\rho\alphastarrho\alpha}&=-p_1&F_{1}^{{}_\alpha\rho\alphastarrho\rho}&=p_1&F_{1}^{{}_\alpha\rho\alphastarrho{}_\alpha\rho}&=1&F_{1}^{{}_\alpha\rho\alphastarrho\alphastarrho}&=-1\\F_{1}^{\alphastarrho1\alphastarrho}&=1&F_{1}^{\alphastarrho\alpha{}_\alpha\rho}&=1&F_{1}^{\alphastarrho\alpha^*\rho}&=1&F_{1}^{\alphastarrho\rho\alpha}&=1\\F_{1}^{\alphastarrho\rho\rho}&=1&F_{1}^{\alphastarrho\rho{}_\alpha\rho}&=-p_1&F_{1}^{\alphastarrho\rho\alphastarrho}&=1&F_{1}^{\alphastarrho{}_\alpha\rho\alpha^*}&=-p_1\\F_{1}^{\alphastarrho{}_\alpha\rho\rho}&=p_1&F_{1}^{\alphastarrho{}_\alpha\rho{}_\alpha\rho}&=-1&F_{1}^{\alphastarrho{}_\alpha\rho\alphastarrho}&=1&F_{1}^{\alphastarrho\alphastarrho1}&=1\\F_{1}^{\alphastarrho\alphastarrho\rho}&=-p_1p_2&F_{1}^{\alphastarrho\alphastarrho{}_\alpha\rho}&=-1&F_{1}^{\alphastarrho\alphastarrho\alphastarrho}&=1&F_{\alpha}^{11\alpha}&=1\\F_{\alpha}^{1\alpha1}&=1&F_{\alpha}^{1\alpha^*\alpha^*}&=1&F_{\alpha}^{1\rho\alphastarrho}&=1&F_{\alpha}^{1{}_\alpha\rho\rho}&=1\\F_{\alpha}^{1\alphastarrho{}_\alpha\rho}&=1&F_{\alpha}^{\alpha11}&=1&F_{\alpha}^{\alpha\alpha\alpha^*}&=1&F_{\alpha}^{\alpha\alpha^*\alpha}&=1\\F_{\alpha}^{\alpha\rho\rho}&=1&F_{\alpha}^{\alpha{}_\alpha\rho{}_\alpha\rho}&=1&F_{\alpha}^{\alpha\alphastarrho\alphastarrho}&=1&F_{\alpha}^{\alpha^*1\alpha^*}&=1\\F_{\alpha}^{\alpha^*\alpha\alpha}&=1&F_{\alpha}^{\alpha^*\alpha^*1}&=1&F_{\alpha}^{\alpha^*\rho{}_\alpha\rho}&=1&F_{\alpha}^{\alpha^*{}_\alpha\rho\alphastarrho}&=-p_1\\F_{\alpha}^{\alpha^*\alphastarrho\rho}&=-p_1&F_{\alpha}^{\rho1\alphastarrho}&=1&F_{\alpha}^{\rho\alpha{}_\alpha\rho}&=-p_1&F_{\alpha}^{\rho\alpha^*\rho}&=-1\\F_{\alpha}^{\rho\rho\alpha}&=1&F_{\alpha}^{\rho\rho\rho}&=1&F_{\alpha}^{\rho\rho{}_\alpha\rho}&=1&F_{\alpha}^{\rho\rho\alphastarrho}&=1\\F_{\alpha}^{\rho{}_\alpha\rho\alpha^*}&=-p_1&F_{\alpha}^{\rho{}_\alpha\rho\rho}&=p_1&F_{\alpha}^{\rho{}_\alpha\rho{}_\alpha\rho}&=p_1&F_{\alpha}^{\rho{}_\alpha\rho\alphastarrho}&=1\\F_{\alpha}^{\rho\alphastarrho1}&=1&F_{\alpha}^{\rho\alphastarrho\rho}&=-p_1p_2&F_{\alpha}^{\rho\alphastarrho{}_\alpha\rho}&=p_1&F_{\alpha}^{\rho\alphastarrho\alphastarrho}&=1\\F_{\alpha}^{{}_\alpha\rho1\rho}&=1&F_{\alpha}^{{}_\alpha\rho\alpha\alphastarrho}&=-1&F_{\alpha}^{{}_\alpha\rho\alpha^*{}_\alpha\rho}&=-1&F_{\alpha}^{{}_\alpha\rho\rho1}&=1\\F_{\alpha}^{{}_\alpha\rho\rho\rho}&=1&F_{\alpha}^{{}_\alpha\rho\rho{}_\alpha\rho}&=1&F_{\alpha}^{{}_\alpha\rho\rho\alphastarrho}&=-1&F_{\alpha}^{{}_\alpha\rho{}_\alpha\rho\alpha}&=1\\F_{\alpha}^{{}_\alpha\rho{}_\alpha\rho\rho}&=1&F_{\alpha}^{{}_\alpha\rho{}_\alpha\rho{}_\alpha\rho}&=1&F_{\alpha}^{{}_\alpha\rho{}_\alpha\rho\alphastarrho}&=-1&F_{\alpha}^{{}_\alpha\rho\alphastarrho\alpha^*}&=-p_1\\F_{\alpha}^{{}_\alpha\rho\alphastarrho\rho}&=1&F_{\alpha}^{{}_\alpha\rho\alphastarrho{}_\alpha\rho}&=1&F_{\alpha}^{{}_\alpha\rho\alphastarrho\alphastarrho}&=-p_1p_2&F_{\alpha}^{\alphastarrho1{}_\alpha\rho}&=1\\F_{\alpha}^{\alphastarrho\alpha\rho}&=p_1&F_{\alpha}^{\alphastarrho\alpha^*\alphastarrho}&=1&F_{\alpha}^{\alphastarrho\rho\alpha^*}&=1&F_{\alpha}^{\alphastarrho\rho\rho}&=p_1\\F_{\alpha}^{\alphastarrho\rho{}_\alpha\rho}&=1&F_{\alpha}^{\alphastarrho\rho\alphastarrho}&=p_2&F_{\alpha}^{\alphastarrho{}_\alpha\rho1}&=1&F_{\alpha}^{\alphastarrho{}_\alpha\rho\rho}&=-p_1\\F_{\alpha}^{\alphastarrho{}_\alpha\rho{}_\alpha\rho}&=1&F_{\alpha}^{\alphastarrho{}_\alpha\rho\alphastarrho}&=-1&F_{\alpha}^{\alphastarrho\alphastarrho\alpha}&=1&F_{\alpha}^{\alphastarrho\alphastarrho\rho}&=-1\\F_{\alpha}^{\alphastarrho\alphastarrho{}_\alpha\rho}&=1&F_{\alpha}^{\alphastarrho\alphastarrho\alphastarrho}&=-p_1p_2&F_{\alpha^*}^{11\alpha^*}&=1&F_{\alpha^*}^{1\alpha\alpha}&=1\\F_{\alpha^*}^{1\alpha^*1}&=1&F_{\alpha^*}^{1\rho{}_\alpha\rho}&=1&F_{\alpha^*}^{1{}_\alpha\rho\alphastarrho}&=1&F_{\alpha^*}^{1\alphastarrho\rho}&=1\\F_{\alpha^*}^{\alpha1\alpha}&=1&F_{\alpha^*}^{\alpha\alpha1}&=1&F_{\alpha^*}^{\alpha\alpha^*\alpha^*}&=1&F_{\alpha^*}^{\alpha\rho\alphastarrho}&=-p_1\\F_{\alpha^*}^{\alpha{}_\alpha\rho\rho}&=1&F_{\alpha^*}^{\alpha\alphastarrho{}_\alpha\rho}&=1&F_{\alpha^*}^{\alpha^*11}&=1&F_{\alpha^*}^{\alpha^*\alpha\alpha^*}&=1\\F_{\alpha^*}^{\alpha^*\alpha^*\alpha}&=1&F_{\alpha^*}^{\alpha^*\rho\rho}&=1&F_{\alpha^*}^{\alpha^*{}_\alpha\rho{}_\alpha\rho}&=1&F_{\alpha^*}^{\alpha^*\alphastarrho\alphastarrho}&=1\\F_{\alpha^*}^{\rho1{}_\alpha\rho}&=1&F_{\alpha^*}^{\rho\alpha\rho}&=-1&F_{\alpha^*}^{\rho\alpha^*\alphastarrho}&=p_1&F_{\alpha^*}^{\rho\rho\alpha^*}&=1\\F_{\alpha^*}^{\rho\rho\rho}&=-1&F_{\alpha^*}^{\rho\rho{}_\alpha\rho}&=1&F_{\alpha^*}^{\rho\rho\alphastarrho}&=-p_1p_2&F_{\alpha^*}^{\rho{}_\alpha\rho1}&=1\\F_{\alpha^*}^{\rho{}_\alpha\rho\rho}&=1&F_{\alpha^*}^{\rho{}_\alpha\rho{}_\alpha\rho}&=1&F_{\alpha^*}^{\rho{}_\alpha\rho\alphastarrho}&=p_1&F_{\alpha^*}^{\rho\alphastarrho\alpha}&=1\\F_{\alpha^*}^{\rho\alphastarrho\rho}&=p_1&F_{\alpha^*}^{\rho\alphastarrho{}_\alpha\rho}&=1&F_{\alpha^*}^{\rho\alphastarrho\alphastarrho}&=p_2&F_{\alpha^*}^{{}_\alpha\rho1\alphastarrho}&=1\\F_{\alpha^*}^{{}_\alpha\rho\alpha{}_\alpha\rho}&=-1&F_{\alpha^*}^{{}_\alpha\rho\alpha^*\rho}&=-p_1&F_{\alpha^*}^{{}_\alpha\rho\rho\alpha}&=-p_1&F_{\alpha^*}^{{}_\alpha\rho\rho\rho}&=-p_1\\F_{\alpha^*}^{{}_\alpha\rho\rho{}_\alpha\rho}&=p_1&F_{\alpha^*}^{{}_\alpha\rho\rho\alphastarrho}&=1&F_{\alpha^*}^{{}_\alpha\rho{}_\alpha\rho\alpha^*}&=1&F_{\alpha^*}^{{}_\alpha\rho{}_\alpha\rho\rho}&=-1\\F_{\alpha^*}^{{}_\alpha\rho{}_\alpha\rho{}_\alpha\rho}&=-1&F_{\alpha^*}^{{}_\alpha\rho{}_\alpha\rho\alphastarrho}&=1&F_{\alpha^*}^{{}_\alpha\rho\alphastarrho1}&=1&F_{\alpha^*}^{{}_\alpha\rho\alphastarrho\rho}&=p_1\\F_{\alpha^*}^{{}_\alpha\rho\alphastarrho{}_\alpha\rho}&=-1&F_{\alpha^*}^{{}_\alpha\rho\alphastarrho\alphastarrho}&=1&F_{\alpha^*}^{\alphastarrho1\rho}&=1&F_{\alpha^*}^{\alphastarrho\alpha\alphastarrho}&=1\\F_{\alpha^*}^{\alphastarrho\alpha^*{}_\alpha\rho}&=-1&F_{\alpha^*}^{\alphastarrho\rho1}&=1&F_{\alpha^*}^{\alphastarrho\rho\rho}&=1&F_{\alpha^*}^{\alphastarrho\rho{}_\alpha\rho}&=1\\F_{\alpha^*}^{\alphastarrho\rho\alphastarrho}&=1&F_{\alpha^*}^{\alphastarrho{}_\alpha\rho\alpha}&=1&F_{\alpha^*}^{\alphastarrho{}_\alpha\rho\rho}&=1&F_{\alpha^*}^{\alphastarrho{}_\alpha\rho{}_\alpha\rho}&=1\\F_{\alpha^*}^{\alphastarrho{}_\alpha\rho\alphastarrho}&=-1&F_{\alpha^*}^{\alphastarrho\alphastarrho\alpha^*}&=1&F_{\alpha^*}^{\alphastarrho\alphastarrho\rho}&=1&F_{\alpha^*}^{\alphastarrho\alphastarrho{}_\alpha\rho}&=p_1p_2\\F_{\alpha^*}^{\alphastarrho\alphastarrho\alphastarrho}&=-p_1p_2&F_{\rho}^{11\rho}&=1&F_{\rho}^{1\alpha\alphastarrho}&=1&F_{\rho}^{1\alpha^*{}_\alpha\rho}&=1\\F_{\rho}^{1\rho1}&=1&F_{\rho}^{1\rho\rho}&=1&F_{\rho}^{1\rho{}_\alpha\rho}&=1&F_{\rho}^{1\rho\alphastarrho}&=1\\F_{\rho}^{1{}_\alpha\rho\alpha}&=1&F_{\rho}^{1{}_\alpha\rho\rho}&=1&F_{\rho}^{1{}_\alpha\rho{}_\alpha\rho}&=1&F_{\rho}^{1{}_\alpha\rho\alphastarrho}&=1\\F_{\rho}^{1\alphastarrho\alpha^*}&=1&F_{\rho}^{1\alphastarrho\rho}&=1&F_{\rho}^{1\alphastarrho{}_\alpha\rho}&=1&F_{\rho}^{1\alphastarrho\alphastarrho}&=1\\F_{\rho}^{\alpha1\alphastarrho}&=1&F_{\rho}^{\alpha\alpha{}_\alpha\rho}&=1&F_{\rho}^{\alpha\alpha^*\rho}&=1&F_{\rho}^{\alpha\rho\alpha}&=-1\\F_{\rho}^{\alpha\rho\rho}&=-1&F_{\rho}^{\alpha\rho{}_\alpha\rho}&=1&F_{\rho}^{\alpha\rho\alphastarrho}&=1&F_{\rho}^{\alpha{}_\alpha\rho\alpha^*}&=1\\F_{\rho}^{\alpha{}_\alpha\rho\rho}&=1&F_{\rho}^{\alpha{}_\alpha\rho{}_\alpha\rho}&=1&F_{\rho}^{\alpha{}_\alpha\rho\alphastarrho}&=p_1p_2&F_{\rho}^{\alpha\alphastarrho1}&=1\\F_{\rho}^{\alpha\alphastarrho\rho}&=1&F_{\rho}^{\alpha\alphastarrho{}_\alpha\rho}&=1&F_{\rho}^{\alpha\alphastarrho\alphastarrho}&=1&F_{\rho}^{\alpha^*1{}_\alpha\rho}&=1\\F_{\rho}^{\alpha^*\alpha\rho}&=1&F_{\rho}^{\alpha^*\alpha^*\alphastarrho}&=-p_1&F_{\rho}^{\alpha^*\rho\alpha^*}&=-1&F_{\rho}^{\alpha^*\rho\rho}&=1\\F_{\rho}^{\alpha^*\rho{}_\alpha\rho}&=1&F_{\rho}^{\alpha^*\rho\alphastarrho}&=1&F_{\rho}^{\alpha^*{}_\alpha\rho1}&=1&F_{\rho}^{\alpha^*{}_\alpha\rho\rho}&=1\\F_{\rho}^{\alpha^*{}_\alpha\rho{}_\alpha\rho}&=1&F_{\rho}^{\alpha^*{}_\alpha\rho\alphastarrho}&=1&F_{\rho}^{\alpha^*\alphastarrho\alpha}&=-1&F_{\rho}^{\alpha^*\alphastarrho\rho}&=-1\\F_{\rho}^{\alpha^*\alphastarrho{}_\alpha\rho}&=1&F_{\rho}^{\alpha^*\alphastarrho\alphastarrho}&=1&F_{\rho}^{\rho11}&=1&F_{\rho}^{\rho1\rho}&=1\\F_{\rho}^{\rho1{}_\alpha\rho}&=1&F_{\rho}^{\rho1\alphastarrho}&=1&F_{\rho}^{\rho\alpha\alpha^*}&=1&F_{\rho}^{\rho\alpha\rho}&=-1\\F_{\rho}^{\rho\alpha{}_\alpha\rho}&=p_1&F_{\rho}^{\rho\alpha\alphastarrho}&=-p_1p_2&F_{\rho}^{\rho\alpha^*\alpha}&=1&F_{\rho}^{\rho\alpha^*\rho}&=1\\F_{\rho}^{\rho\alpha^*{}_\alpha\rho}&=1&F_{\rho}^{\rho\alpha^*\alphastarrho}&=-p_1&F_{\rho}^{\rho\rho1}&=1&F_{\rho}^{\rho\rho\alpha}&=-1\\F_{\rho}^{\rho\rho\alpha^*}&=1&F_{\rho}^{\rho{}_\alpha\rho1}&=1&F_{\rho}^{\rho{}_\alpha\rho\alpha}&=1&F_{\rho}^{\rho{}_\alpha\rho\alpha^*}&=-p_1\\F_{\rho}^{\rho\alphastarrho1}&=1&F_{\rho}^{\rho\alphastarrho\alpha}&=p_2&F_{\rho}^{\rho\alphastarrho\alpha^*}&=1&F_{\rho}^{{}_\alpha\rho1\alpha}&=1\\F_{\rho}^{{}_\alpha\rho1\rho}&=1&F_{\rho}^{{}_\alpha\rho1{}_\alpha\rho}&=1&F_{\rho}^{{}_\alpha\rho1\alphastarrho}&=1&F_{\rho}^{{}_\alpha\rho\alpha1}&=1\\F_{\rho}^{{}_\alpha\rho\alpha\rho}&=1&F_{\rho}^{{}_\alpha\rho\alpha{}_\alpha\rho}&=1&F_{\rho}^{{}_\alpha\rho\alpha\alphastarrho}&=1&F_{\rho}^{{}_\alpha\rho\alpha^*\alpha^*}&=-p_1\\F_{\rho}^{{}_\alpha\rho\alpha^*\rho}&=p_1&F_{\rho}^{{}_\alpha\rho\alpha^*{}_\alpha\rho}&=-1&F_{\rho}^{{}_\alpha\rho\alpha^*\alphastarrho}&=-p_1p_2&F_{\rho}^{{}_\alpha\rho\rho1}&=1\\F_{\rho}^{{}_\alpha\rho\rho\alpha}&=-1&F_{\rho}^{{}_\alpha\rho\rho\alpha^*}&=p_1&F_{\rho}^{{}_\alpha\rho{}_\alpha\rho1}&=1&F_{\rho}^{{}_\alpha\rho{}_\alpha\rho\alpha}&=1\\F_{\rho}^{{}_\alpha\rho{}_\alpha\rho\alpha^*}&=1&F_{\rho}^{{}_\alpha\rho\alphastarrho1}&=1&F_{\rho}^{{}_\alpha\rho\alphastarrho\alpha}&=-1&F_{\rho}^{{}_\alpha\rho\alphastarrho\alpha^*}&=1\\F_{\rho}^{\alphastarrho1\alpha^*}&=1&F_{\rho}^{\alphastarrho1\rho}&=1&F_{\rho}^{\alphastarrho1{}_\alpha\rho}&=1&F_{\rho}^{\alphastarrho1\alphastarrho}&=1\\F_{\rho}^{\alphastarrho\alpha\alpha}&=1&F_{\rho}^{\alphastarrho\alpha\rho}&=p_1&F_{\rho}^{\alphastarrho\alpha{}_\alpha\rho}&=-p_1p_2&F_{\rho}^{\alphastarrho\alpha\alphastarrho}&=-1\\F_{\rho}^{\alphastarrho\alpha^*1}&=1&F_{\rho}^{\alphastarrho\alpha^*\rho}&=-p_1p_2&F_{\rho}^{\alphastarrho\alpha^*{}_\alpha\rho}&=-1&F_{\rho}^{\alphastarrho\alpha^*\alphastarrho}&=-1\\F_{\rho}^{\alphastarrho\rho1}&=1&F_{\rho}^{\alphastarrho\rho\alpha}&=-p_2&F_{\rho}^{\alphastarrho\rho\alpha^*}&=-1&F_{\rho}^{\alphastarrho{}_\alpha\rho1}&=1\\F_{\rho}^{\alphastarrho{}_\alpha\rho\alpha}&=1&F_{\rho}^{\alphastarrho{}_\alpha\rho\alpha^*}&=p_1p_2&F_{\rho}^{\alphastarrho\alphastarrho1}&=1&F_{\rho}^{\alphastarrho\alphastarrho\alpha}&=-1\\F_{\rho}^{\alphastarrho\alphastarrho\alpha^*}&=1&F_{{}_\alpha\rho}^{11{}_\alpha\rho}&=1&F_{{}_\alpha\rho}^{1\alpha\rho}&=1&F_{{}_\alpha\rho}^{1\alpha^*\alphastarrho}&=1\\F_{{}_\alpha\rho}^{1\rho\alpha^*}&=1&F_{{}_\alpha\rho}^{1\rho\rho}&=1&F_{{}_\alpha\rho}^{1\rho{}_\alpha\rho}&=1&F_{{}_\alpha\rho}^{1\rho\alphastarrho}&=1\\F_{{}_\alpha\rho}^{1{}_\alpha\rho1}&=1&F_{{}_\alpha\rho}^{1{}_\alpha\rho\rho}&=1&F_{{}_\alpha\rho}^{1{}_\alpha\rho{}_\alpha\rho}&=1&F_{{}_\alpha\rho}^{1{}_\alpha\rho\alphastarrho}&=1\\F_{{}_\alpha\rho}^{1\alphastarrho\alpha}&=1&F_{{}_\alpha\rho}^{1\alphastarrho\rho}&=1&F_{{}_\alpha\rho}^{1\alphastarrho{}_\alpha\rho}&=1&F_{{}_\alpha\rho}^{1\alphastarrho\alphastarrho}&=1\\F_{{}_\alpha\rho}^{\alpha1\rho}&=1&F_{{}_\alpha\rho}^{\alpha\alpha\alphastarrho}&=-p_1&F_{{}_\alpha\rho}^{\alpha\alpha^*{}_\alpha\rho}&=1&F_{{}_\alpha\rho}^{\alpha\rho1}&=1\\F_{{}_\alpha\rho}^{\alpha\rho\rho}&=1&F_{{}_\alpha\rho}^{\alpha\rho{}_\alpha\rho}&=1&F_{{}_\alpha\rho}^{\alpha\rho\alphastarrho}&=1&F_{{}_\alpha\rho}^{\alpha{}_\alpha\rho\alpha}&=p_1\\F_{{}_\alpha\rho}^{\alpha{}_\alpha\rho\rho}&=p_1&F_{{}_\alpha\rho}^{\alpha{}_\alpha\rho{}_\alpha\rho}&=-p_1&F_{{}_\alpha\rho}^{\alpha{}_\alpha\rho\alphastarrho}&=-p_1&F_{{}_\alpha\rho}^{\alpha\alphastarrho\alpha^*}&=-1\\F_{{}_\alpha\rho}^{\alpha\alphastarrho\rho}&=1&F_{{}_\alpha\rho}^{\alpha\alphastarrho{}_\alpha\rho}&=1&F_{{}_\alpha\rho}^{\alpha\alphastarrho\alphastarrho}&=1&F_{{}_\alpha\rho}^{\alpha^*1\alphastarrho}&=1\\F_{{}_\alpha\rho}^{\alpha^*\alpha{}_\alpha\rho}&=-p_1&F_{{}_\alpha\rho}^{\alpha^*\alpha^*\rho}&=-p_1&F_{{}_\alpha\rho}^{\alpha^*\rho\alpha}&=1&F_{{}_\alpha\rho}^{\alpha^*\rho\rho}&=1\\F_{{}_\alpha\rho}^{\alpha^*\rho{}_\alpha\rho}&=1&F_{{}_\alpha\rho}^{\alpha^*\rho\alphastarrho}&=1&F_{{}_\alpha\rho}^{\alpha^*{}_\alpha\rho\alpha^*}&=p_1&F_{{}_\alpha\rho}^{\alpha^*{}_\alpha\rho\rho}&=-p_1\\F_{{}_\alpha\rho}^{\alpha^*{}_\alpha\rho{}_\alpha\rho}&=-p_1&F_{{}_\alpha\rho}^{\alpha^*{}_\alpha\rho\alphastarrho}&=-p_2&F_{{}_\alpha\rho}^{\alpha^*\alphastarrho1}&=1&F_{{}_\alpha\rho}^{\alpha^*\alphastarrho\rho}&=1\\F_{{}_\alpha\rho}^{\alpha^*\alphastarrho{}_\alpha\rho}&=1&F_{{}_\alpha\rho}^{\alpha^*\alphastarrho\alphastarrho}&=1&F_{{}_\alpha\rho}^{\rho1\alpha^*}&=1&F_{{}_\alpha\rho}^{\rho1\rho}&=1\\F_{{}_\alpha\rho}^{\rho1{}_\alpha\rho}&=1&F_{{}_\alpha\rho}^{\rho1\alphastarrho}&=1&F_{{}_\alpha\rho}^{\rho\alpha\alpha}&=1&F_{{}_\alpha\rho}^{\rho\alpha\rho}&=-p_1\\F_{{}_\alpha\rho}^{\rho\alpha{}_\alpha\rho}&=-p_1p_2&F_{{}_\alpha\rho}^{\rho\alpha\alphastarrho}&=-1&F_{{}_\alpha\rho}^{\rho\alpha^*1}&=1&F_{{}_\alpha\rho}^{\rho\alpha^*\rho}&=p_1p_2\\F_{{}_\alpha\rho}^{\rho\alpha^*{}_\alpha\rho}&=1&F_{{}_\alpha\rho}^{\rho\alpha^*\alphastarrho}&=1&F_{{}_\alpha\rho}^{\rho\rho1}&=1&F_{{}_\alpha\rho}^{\rho\rho\alpha}&=p_2\\F_{{}_\alpha\rho}^{\rho\rho\alpha^*}&=1&F_{{}_\alpha\rho}^{\rho{}_\alpha\rho1}&=1&F_{{}_\alpha\rho}^{\rho{}_\alpha\rho\alpha}&=-1&F_{{}_\alpha\rho}^{\rho{}_\alpha\rho\alpha^*}&=-p_1p_2\\F_{{}_\alpha\rho}^{\rho\alphastarrho1}&=1&F_{{}_\alpha\rho}^{\rho\alphastarrho\alpha}&=1&F_{{}_\alpha\rho}^{\rho\alphastarrho\alpha^*}&=-1&F_{{}_\alpha\rho}^{{}_\alpha\rho11}&=1\\F_{{}_\alpha\rho}^{{}_\alpha\rho1\rho}&=1&F_{{}_\alpha\rho}^{{}_\alpha\rho1{}_\alpha\rho}&=1&F_{{}_\alpha\rho}^{{}_\alpha\rho1\alphastarrho}&=1&F_{{}_\alpha\rho}^{{}_\alpha\rho\alpha\alpha^*}&=1\\F_{{}_\alpha\rho}^{{}_\alpha\rho\alpha\rho}&=1&F_{{}_\alpha\rho}^{{}_\alpha\rho\alpha{}_\alpha\rho}&=-p_1&F_{{}_\alpha\rho}^{{}_\alpha\rho\alpha\alphastarrho}&=p_1p_2&F_{{}_\alpha\rho}^{{}_\alpha\rho\alpha^*\alpha}&=-p_1\\F_{{}_\alpha\rho}^{{}_\alpha\rho\alpha^*\rho}&=-p_1&F_{{}_\alpha\rho}^{{}_\alpha\rho\alpha^*{}_\alpha\rho}&=p_1&F_{{}_\alpha\rho}^{{}_\alpha\rho\alpha^*\alphastarrho}&=-1&F_{{}_\alpha\rho}^{{}_\alpha\rho\rho1}&=1\\F_{{}_\alpha\rho}^{{}_\alpha\rho\rho\alpha}&=-p_1&F_{{}_\alpha\rho}^{{}_\alpha\rho\rho\alpha^*}&=1&F_{{}_\alpha\rho}^{{}_\alpha\rho{}_\alpha\rho1}&=1&F_{{}_\alpha\rho}^{{}_\alpha\rho{}_\alpha\rho\alpha}&=p_1\\F_{{}_\alpha\rho}^{{}_\alpha\rho{}_\alpha\rho\alpha^*}&=p_1&F_{{}_\alpha\rho}^{{}_\alpha\rho\alphastarrho1}&=1&F_{{}_\alpha\rho}^{{}_\alpha\rho\alphastarrho\alpha}&=1&F_{{}_\alpha\rho}^{{}_\alpha\rho\alphastarrho\alpha^*}&=-1\\F_{{}_\alpha\rho}^{\alphastarrho1\alpha}&=1&F_{{}_\alpha\rho}^{\alphastarrho1\rho}&=1&F_{{}_\alpha\rho}^{\alphastarrho1{}_\alpha\rho}&=1&F_{{}_\alpha\rho}^{\alphastarrho1\alphastarrho}&=1\\F_{{}_\alpha\rho}^{\alphastarrho\alpha1}&=1&F_{{}_\alpha\rho}^{\alphastarrho\alpha\rho}&=-1&F_{{}_\alpha\rho}^{\alphastarrho\alpha{}_\alpha\rho}&=-1&F_{{}_\alpha\rho}^{\alphastarrho\alpha\alphastarrho}&=1\\F_{{}_\alpha\rho}^{\alphastarrho\alpha^*\alpha^*}&=1&F_{{}_\alpha\rho}^{\alphastarrho\alpha^*\rho}&=-1&F_{{}_\alpha\rho}^{\alphastarrho\alpha^*{}_\alpha\rho}&=-p_1&F_{{}_\alpha\rho}^{\alphastarrho\alpha^*\alphastarrho}&=p_2\\F_{{}_\alpha\rho}^{\alphastarrho\rho1}&=1&F_{{}_\alpha\rho}^{\alphastarrho\rho\alpha}&=1&F_{{}_\alpha\rho}^{\alphastarrho\rho\alpha^*}&=1&F_{{}_\alpha\rho}^{\alphastarrho{}_\alpha\rho1}&=1\\F_{{}_\alpha\rho}^{\alphastarrho{}_\alpha\rho\alpha}&=1&F_{{}_\alpha\rho}^{\alphastarrho{}_\alpha\rho\alpha^*}&=p_1&F_{{}_\alpha\rho}^{\alphastarrho\alphastarrho1}&=1&F_{{}_\alpha\rho}^{\alphastarrho\alphastarrho\alpha}&=1\\F_{{}_\alpha\rho}^{\alphastarrho\alphastarrho\alpha^*}&=-p_2&F_{\alphastarrho}^{11\alphastarrho}&=1&F_{\alphastarrho}^{1\alpha{}_\alpha\rho}&=1&F_{\alphastarrho}^{1\alpha^*\rho}&=1\\F_{\alphastarrho}^{1\rho\alpha}&=1&F_{\alphastarrho}^{1\rho\rho}&=1&F_{\alphastarrho}^{1\rho{}_\alpha\rho}&=1&F_{\alphastarrho}^{1\rho\alphastarrho}&=1\\F_{\alphastarrho}^{1{}_\alpha\rho\alpha^*}&=1&F_{\alphastarrho}^{1{}_\alpha\rho\rho}&=1&F_{\alphastarrho}^{1{}_\alpha\rho{}_\alpha\rho}&=1&F_{\alphastarrho}^{1{}_\alpha\rho\alphastarrho}&=1\\F_{\alphastarrho}^{1\alphastarrho1}&=1&F_{\alphastarrho}^{1\alphastarrho\rho}&=1&F_{\alphastarrho}^{1\alphastarrho{}_\alpha\rho}&=1&F_{\alphastarrho}^{1\alphastarrho\alphastarrho}&=1\\F_{\alphastarrho}^{\alpha1{}_\alpha\rho}&=1&F_{\alphastarrho}^{\alpha\alpha\rho}&=1&F_{\alphastarrho}^{\alpha\alpha^*\alphastarrho}&=-p_1&F_{\alphastarrho}^{\alpha\rho\alpha^*}&=-1\\F_{\alphastarrho}^{\alpha\rho\rho}&=1&F_{\alphastarrho}^{\alpha\rho{}_\alpha\rho}&=1&F_{\alphastarrho}^{\alpha\rho\alphastarrho}&=p_1p_2&F_{\alphastarrho}^{\alpha{}_\alpha\rho1}&=1\\F_{\alphastarrho}^{\alpha{}_\alpha\rho\rho}&=1&F_{\alphastarrho}^{\alpha{}_\alpha\rho{}_\alpha\rho}&=1&F_{\alphastarrho}^{\alpha{}_\alpha\rho\alphastarrho}&=1&F_{\alphastarrho}^{\alpha\alphastarrho\alpha}&=-p_1\\F_{\alphastarrho}^{\alpha\alphastarrho\rho}&=-p_1&F_{\alphastarrho}^{\alpha\alphastarrho{}_\alpha\rho}&=-p_1&F_{\alphastarrho}^{\alpha\alphastarrho\alphastarrho}&=-p_1&F_{\alphastarrho}^{\alpha^*1\rho}&=1\\F_{\alphastarrho}^{\alpha^*\alpha\alphastarrho}&=1&F_{\alphastarrho}^{\alpha^*\alpha^*{}_\alpha\rho}&=1&F_{\alphastarrho}^{\alpha^*\rho1}&=1&F_{\alphastarrho}^{\alpha^*\rho\rho}&=1\\F_{\alphastarrho}^{\alpha^*\rho{}_\alpha\rho}&=1&F_{\alphastarrho}^{\alpha^*\rho\alphastarrho}&=1&F_{\alphastarrho}^{\alpha^*{}_\alpha\rho\alpha}&=-1&F_{\alphastarrho}^{\alpha^*{}_\alpha\rho\rho}&=-1\\F_{\alphastarrho}^{\alpha^*{}_\alpha\rho{}_\alpha\rho}&=1&F_{\alphastarrho}^{\alpha^*{}_\alpha\rho\alphastarrho}&=1&F_{\alphastarrho}^{\alpha^*\alphastarrho\alpha^*}&=-p_1&F_{\alphastarrho}^{\alpha^*\alphastarrho\rho}&=-p_1\\F_{\alphastarrho}^{\alpha^*\alphastarrho{}_\alpha\rho}&=-p_1&F_{\alphastarrho}^{\alpha^*\alphastarrho\alphastarrho}&=-p_2&F_{\alphastarrho}^{\rho1\alpha}&=1&F_{\alphastarrho}^{\rho1\rho}&=1\\F_{\alphastarrho}^{\rho1{}_\alpha\rho}&=1&F_{\alphastarrho}^{\rho1\alphastarrho}&=1&F_{\alphastarrho}^{\rho\alpha1}&=1&F_{\alphastarrho}^{\rho\alpha\rho}&=-1\\F_{\alphastarrho}^{\rho\alpha{}_\alpha\rho}&=-1&F_{\alphastarrho}^{\rho\alpha\alphastarrho}&=1&F_{\alphastarrho}^{\rho\alpha^*\alpha^*}&=-p_1&F_{\alphastarrho}^{\rho\alpha^*\rho}&=-p_1\\F_{\alphastarrho}^{\rho\alpha^*{}_\alpha\rho}&=-1&F_{\alphastarrho}^{\rho\alpha^*\alphastarrho}&=1&F_{\alphastarrho}^{\rho\rho1}&=1&F_{\alphastarrho}^{\rho\rho\alpha}&=1\\F_{\alphastarrho}^{\rho\rho\alpha^*}&=p_1&F_{\alphastarrho}^{\rho{}_\alpha\rho1}&=1&F_{\alphastarrho}^{\rho{}_\alpha\rho\alpha}&=1&F_{\alphastarrho}^{\rho{}_\alpha\rho\alpha^*}&=1\\F_{\alphastarrho}^{\rho\alphastarrho1}&=1&F_{\alphastarrho}^{\rho\alphastarrho\alpha}&=1&F_{\alphastarrho}^{\rho\alphastarrho\alpha^*}&=-p_1p_2&F_{\alphastarrho}^{{}_\alpha\rho1\alpha^*}&=1\\F_{\alphastarrho}^{{}_\alpha\rho1\rho}&=1&F_{\alphastarrho}^{{}_\alpha\rho1{}_\alpha\rho}&=1&F_{\alphastarrho}^{{}_\alpha\rho1\alphastarrho}&=1&F_{\alphastarrho}^{{}_\alpha\rho\alpha\alpha}&=-p_1\\F_{\alphastarrho}^{{}_\alpha\rho\alpha\rho}&=-1&F_{\alphastarrho}^{{}_\alpha\rho\alpha{}_\alpha\rho}&=-p_1&F_{\alphastarrho}^{{}_\alpha\rho\alpha\alphastarrho}&=-p_1&F_{\alphastarrho}^{{}_\alpha\rho\alpha^*1}&=1\\F_{\alphastarrho}^{{}_\alpha\rho\alpha^*\rho}&=-1&F_{\alphastarrho}^{{}_\alpha\rho\alpha^*{}_\alpha\rho}&=-1&F_{\alphastarrho}^{{}_\alpha\rho\alpha^*\alphastarrho}&=-1&F_{\alphastarrho}^{{}_\alpha\rho\rho1}&=1\\F_{\alphastarrho}^{{}_\alpha\rho\rho\alpha}&=1&F_{\alphastarrho}^{{}_\alpha\rho\rho\alpha^*}&=-1&F_{\alphastarrho}^{{}_\alpha\rho{}_\alpha\rho1}&=1&F_{\alphastarrho}^{{}_\alpha\rho{}_\alpha\rho\alpha}&=p_1\\F_{\alphastarrho}^{{}_\alpha\rho{}_\alpha\rho\alpha^*}&=1&F_{\alphastarrho}^{{}_\alpha\rho\alphastarrho1}&=1&F_{\alphastarrho}^{{}_\alpha\rho\alphastarrho\alpha}&=-p_1&F_{\alphastarrho}^{{}_\alpha\rho\alphastarrho\alpha^*}&=1\\F_{\alphastarrho}^{\alphastarrho11}&=1&F_{\alphastarrho}^{\alphastarrho1\rho}&=1&F_{\alphastarrho}^{\alphastarrho1{}_\alpha\rho}&=1&F_{\alphastarrho}^{\alphastarrho1\alphastarrho}&=1\\F_{\alphastarrho}^{\alphastarrho\alpha\alpha^*}&=-p_1&F_{\alphastarrho}^{\alphastarrho\alpha\rho}&=p_1&F_{\alphastarrho}^{\alphastarrho\alpha{}_\alpha\rho}&=-1&F_{\alphastarrho}^{\alphastarrho\alpha\alphastarrho}&=p_1\\F_{\alphastarrho}^{\alphastarrho\alpha^*\alpha}&=1&F_{\alphastarrho}^{\alphastarrho\alpha^*\rho}&=1&F_{\alphastarrho}^{\alphastarrho\alpha^*{}_\alpha\rho}&=-1&F_{\alphastarrho}^{\alphastarrho\alpha^*\alphastarrho}&=p_1\\F_{\alphastarrho}^{\alphastarrho\rho1}&=1&F_{\alphastarrho}^{\alphastarrho\rho\alpha}&=1&F_{\alphastarrho}^{\alphastarrho\rho\alpha^*}&=-p_1&F_{\alphastarrho}^{\alphastarrho{}_\alpha\rho1}&=1\\F_{\alphastarrho}^{\alphastarrho{}_\alpha\rho\alpha}&=-1&F_{\alphastarrho}^{\alphastarrho{}_\alpha\rho\alpha^*}&=1&F_{\alphastarrho}^{\alphastarrho\alphastarrho1}&=1&F_{\alphastarrho}^{\alphastarrho\alphastarrho\alpha}&=-p_2\\F_{\alphastarrho}^{\alphastarrho\alphastarrho\alpha^*}&=-p_1
\end{align}}
\subsection{$3$-dimensional $F$-symbols}
{\allowdisplaybreaks\begin{align}
F_{\rho}^{\rho\rho{}_\alpha\rho}=&{\small\begin{array}{c|ccc}&\rho&{}_\alpha\rho&\alphastarrho\\\hline\rho&D_+&D_-&-C\\{}_\alpha\rho&D_-&C&-D_+\\\alphastarrho&C&D_+&-D_-\end{array}}&F_{\rho}^{\rho\rho\alphastarrho}=&{\small\begin{array}{c|ccc}&\rho&{}_\alpha\rho&\alphastarrho\\\hline\rho&D_-&C&-D_+p_1p_2\\{}_\alpha\rho&Cp_1p_2&D_+p_1p_2&-D_-\\\alphastarrho&D_+&D_-&-Cp_1p_2\end{array}}\\[0.5\baselineskip]F_{\rho}^{\rho{}_\alpha\rho\rho}=&{\small\begin{array}{c|ccc}&\rho&{}_\alpha\rho&\alphastarrho\\\hline\rho&D_+&D_-&C\\{}_\alpha\rho&D_-&C&D_+\\\alphastarrho&-Cp_1&-D_+p_1&-D_-p_1\end{array}}&F_{\rho}^{\rho{}_\alpha\rho\alphastarrho}=&{\small\begin{array}{c|ccc}&\rho&{}_\alpha\rho&\alphastarrho\\\hline\rho&-Cp_1&D_+&D_-p_1p_2\\{}_\alpha\rho&D_+&-D_-p_1&-Cp_2\\\alphastarrho&D_-&-Cp_1&-D_+p_2\end{array}}\\[0.5\baselineskip]F_{\rho}^{\rho\alphastarrho\rho}=&{\small\begin{array}{c|ccc}&\rho&{}_\alpha\rho&\alphastarrho\\\hline\rho&D_-&Cp_1p_2&D_+\\{}_\alpha\rho&Cp_1&D_+p_2&D_-p_1\\\alphastarrho&D_+&D_-p_1p_2&C\end{array}}&F_{\rho}^{\rho\alphastarrho{}_\alpha\rho}=&{\small\begin{array}{c|ccc}&\rho&{}_\alpha\rho&\alphastarrho\\\hline\rho&-Cp_1&D_+p_1p_2&D_-\\{}_\alpha\rho&D_+&-D_-p_2&-Cp_1\\\alphastarrho&D_-&-Cp_2&-D_+p_1\end{array}}\\[0.5\baselineskip]F_{\rho}^{{}_\alpha\rho\rho\rho}=&{\small\begin{array}{c|ccc}&\rho&{}_\alpha\rho&\alphastarrho\\\hline\rho&D_+&D_-&-Cp_1\\{}_\alpha\rho&D_-&C&-D_+p_1\\\alphastarrho&-C&-D_+&D_-p_1\end{array}}&F_{\rho}^{{}_\alpha\rho\rho{}_\alpha\rho}=&{\small\begin{array}{c|ccc}&\rho&{}_\alpha\rho&\alphastarrho\\\hline\rho&D_-&-Cp_1&D_+\\{}_\alpha\rho&-Cp_1&D_+&-D_-p_1\\\alphastarrho&-D_+p_1&D_-&-Cp_1\end{array}}\\[0.5\baselineskip]F_{\rho}^{{}_\alpha\rho{}_\alpha\rho{}_\alpha\rho}=&{\small\begin{array}{c|ccc}&\rho&{}_\alpha\rho&\alphastarrho\\\hline\rho&C&D_+&D_-\\{}_\alpha\rho&D_+&D_-&C\\\alphastarrho&D_-&C&D_+\end{array}}&F_{\rho}^{{}_\alpha\rho{}_\alpha\rho\alphastarrho}=&{\small\begin{array}{c|ccc}&\rho&{}_\alpha\rho&\alphastarrho\\\hline\rho&D_+&D_-&Cp_1p_2\\{}_\alpha\rho&D_-&C&D_+p_1p_2\\\alphastarrho&C&D_+&D_-p_1p_2\end{array}}\\[0.5\baselineskip]F_{\rho}^{{}_\alpha\rho\alphastarrho\rho}=&{\small\begin{array}{c|ccc}&\rho&{}_\alpha\rho&\alphastarrho\\\hline\rho&-Cp_1&D_+&D_-\\{}_\alpha\rho&D_+p_1&-D_-&-C\\\alphastarrho&D_-&-Cp_1&-D_+p_1\end{array}}&F_{\rho}^{{}_\alpha\rho\alphastarrho\alphastarrho}=&{\small\begin{array}{c|ccc}&\rho&{}_\alpha\rho&\alphastarrho\\\hline\rho&D_-p_1p_2&Cp_1p_2&D_+\\{}_\alpha\rho&C&D_+&D_-p_1p_2\\\alphastarrho&D_+&D_-&Cp_1p_2\end{array}}\\[0.5\baselineskip]F_{\rho}^{\alphastarrho\rho\rho}=&{\small\begin{array}{c|ccc}&\rho&{}_\alpha\rho&\alphastarrho\\\hline\rho&D_-&Cp_1&D_+\\{}_\alpha\rho&C&D_+p_1&D_-\\\alphastarrho&-D_+p_1p_2&-D_-p_2&-Cp_1p_2\end{array}}&F_{\rho}^{\alphastarrho\rho\alphastarrho}=&{\small\begin{array}{c|ccc}&\rho&{}_\alpha\rho&\alphastarrho\\\hline\rho&D_+&-D_-&-Cp_2\\{}_\alpha\rho&-D_-p_2&Cp_2&D_+\\\alphastarrho&-Cp_2&D_+p_2&D_-\end{array}}\\[0.5\baselineskip]F_{\rho}^{\alphastarrho{}_\alpha\rho\rho}=&{\small\begin{array}{c|ccc}&\rho&{}_\alpha\rho&\alphastarrho\\\hline\rho&-Cp_1&D_+&D_-\\{}_\alpha\rho&D_+&-D_-p_1&-Cp_1\\\alphastarrho&-D_-p_2&Cp_1p_2&D_+p_1p_2\end{array}}&F_{\rho}^{\alphastarrho{}_\alpha\rho{}_\alpha\rho}=&{\small\begin{array}{c|ccc}&\rho&{}_\alpha\rho&\alphastarrho\\\hline\rho&D_+&D_-&C\\{}_\alpha\rho&D_-&C&D_+\\\alphastarrho&Cp_1p_2&D_+p_1p_2&D_-p_1p_2\end{array}}\\[0.5\baselineskip]F_{\rho}^{\alphastarrho\alphastarrho{}_\alpha\rho}=&{\small\begin{array}{c|ccc}&\rho&{}_\alpha\rho&\alphastarrho\\\hline\rho&D_-p_1p_2&C&D_+\\{}_\alpha\rho&Cp_1p_2&D_+&D_-\\\alphastarrho&D_+&D_-p_1p_2&Cp_1p_2\end{array}}&F_{\rho}^{\alphastarrho\alphastarrho\alphastarrho}=&{\small\begin{array}{c|ccc}&\rho&{}_\alpha\rho&\alphastarrho\\\hline\rho&C&D_+p_1p_2&D_-\\{}_\alpha\rho&D_+p_1p_2&D_-&Cp_1p_2\\\alphastarrho&D_-&Cp_1p_2&D_+\end{array}}\\[0.5\baselineskip]F_{{}_\alpha\rho}^{\rho\rho\rho}=&{\small\begin{array}{c|ccc}&\rho&{}_\alpha\rho&\alphastarrho\\\hline\rho&D_+&D_-&-Cp_1\\{}_\alpha\rho&D_-&C&-D_+p_1\\\alphastarrho&-Cp_1p_2&-D_+p_1p_2&D_-p_2\end{array}}&F_{{}_\alpha\rho}^{\rho\rho\alphastarrho}=&{\small\begin{array}{c|ccc}&\rho&{}_\alpha\rho&\alphastarrho\\\hline\rho&-Cp_2&D_+&-D_-\\{}_\alpha\rho&D_+&-D_-p_2&Cp_2\\\alphastarrho&D_-&-Cp_2&D_+p_2\end{array}}\\[0.5\baselineskip]F_{{}_\alpha\rho}^{\rho{}_\alpha\rho\rho}=&{\small\begin{array}{c|ccc}&\rho&{}_\alpha\rho&\alphastarrho\\\hline\rho&D_-&-Cp_1&-D_+\\{}_\alpha\rho&-Cp_1&D_+&D_-p_1\\\alphastarrho&D_+p_1p_2&-D_-p_2&-Cp_1p_2\end{array}}&F_{{}_\alpha\rho}^{\rho{}_\alpha\rho{}_\alpha\rho}=&{\small\begin{array}{c|ccc}&\rho&{}_\alpha\rho&\alphastarrho\\\hline\rho&C&D_+&D_-\\{}_\alpha\rho&D_+&D_-&C\\\alphastarrho&D_-p_1p_2&Cp_1p_2&D_+p_1p_2\end{array}}\\[0.5\baselineskip]F_{{}_\alpha\rho}^{\rho\alphastarrho{}_\alpha\rho}=&{\small\begin{array}{c|ccc}&\rho&{}_\alpha\rho&\alphastarrho\\\hline\rho&D_+&D_-p_1p_2&C\\{}_\alpha\rho&D_-&Cp_1p_2&D_+\\\alphastarrho&Cp_1p_2&D_+&D_-p_1p_2\end{array}}&F_{{}_\alpha\rho}^{\rho\alphastarrho\alphastarrho}=&{\small\begin{array}{c|ccc}&\rho&{}_\alpha\rho&\alphastarrho\\\hline\rho&D_-&C&D_+p_1p_2\\{}_\alpha\rho&Cp_1p_2&D_+p_1p_2&D_-\\\alphastarrho&D_+&D_-&Cp_1p_2\end{array}}\\[0.5\baselineskip]F_{{}_\alpha\rho}^{{}_\alpha\rho\rho{}_\alpha\rho}=&{\small\begin{array}{c|ccc}&\rho&{}_\alpha\rho&\alphastarrho\\\hline\rho&C&D_+&-D_-p_1\\{}_\alpha\rho&D_+&D_-&-Cp_1\\\alphastarrho&D_-&C&-D_+p_1\end{array}}&F_{{}_\alpha\rho}^{{}_\alpha\rho\rho\alphastarrho}=&{\small\begin{array}{c|ccc}&\rho&{}_\alpha\rho&\alphastarrho\\\hline\rho&D_+p_1p_2&D_-&-Cp_1\\{}_\alpha\rho&D_-&Cp_1p_2&-D_+p_2\\\alphastarrho&Cp_1p_2&D_+&-D_-p_1\end{array}}\\[0.5\baselineskip]F_{{}_\alpha\rho}^{{}_\alpha\rho{}_\alpha\rho\rho}=&{\small\begin{array}{c|ccc}&\rho&{}_\alpha\rho&\alphastarrho\\\hline\rho&C&D_+&D_-p_1\\{}_\alpha\rho&D_+&D_-&Cp_1\\\alphastarrho&-D_-p_1&-Cp_1&-D_+\end{array}}&F_{{}_\alpha\rho}^{{}_\alpha\rho{}_\alpha\rho\alphastarrho}=&{\small\begin{array}{c|ccc}&\rho&{}_\alpha\rho&\alphastarrho\\\hline\rho&D_-p_1p_2&-Cp_1&-D_+p_1\\{}_\alpha\rho&-Cp_2&D_+&D_-\\\alphastarrho&-D_+p_2&D_-&C\end{array}}\\[0.5\baselineskip]F_{{}_\alpha\rho}^{{}_\alpha\rho\alphastarrho\rho}=&{\small\begin{array}{c|ccc}&\rho&{}_\alpha\rho&\alphastarrho\\\hline\rho&D_+&D_-&Cp_1\\{}_\alpha\rho&D_-p_1&Cp_1&D_+\\\alphastarrho&C&D_+&D_-p_1\end{array}}&F_{{}_\alpha\rho}^{{}_\alpha\rho\alphastarrho{}_\alpha\rho}=&{\small\begin{array}{c|ccc}&\rho&{}_\alpha\rho&\alphastarrho\\\hline\rho&D_-&-Cp_1&-D_+p_1\\{}_\alpha\rho&-Cp_1&D_+&D_-\\\alphastarrho&-D_+p_1&D_-&C\end{array}}\\[0.5\baselineskip]F_{{}_\alpha\rho}^{\alphastarrho\rho\rho}=&{\small\begin{array}{c|ccc}&\rho&{}_\alpha\rho&\alphastarrho\\\hline\rho&-C&D_+&D_-\\{}_\alpha\rho&D_+&-D_-&-C\\\alphastarrho&-D_-&C&D_+\end{array}}&F_{{}_\alpha\rho}^{\alphastarrho\rho{}_\alpha\rho}=&{\small\begin{array}{c|ccc}&\rho&{}_\alpha\rho&\alphastarrho\\\hline\rho&D_+p_1&D_-&Cp_1\\{}_\alpha\rho&D_-&Cp_1&D_+\\\alphastarrho&C&D_+p_1&D_-\end{array}}\\[0.5\baselineskip]F_{{}_\alpha\rho}^{\alphastarrho{}_\alpha\rho{}_\alpha\rho}=&{\small\begin{array}{c|ccc}&\rho&{}_\alpha\rho&\alphastarrho\\\hline\rho&D_-p_1&-C&-D_+\\{}_\alpha\rho&-Cp_1&D_+&D_-\\\alphastarrho&-D_+p_1&D_-&C\end{array}}&F_{{}_\alpha\rho}^{\alphastarrho{}_\alpha\rho\alphastarrho}=&{\small\begin{array}{c|ccc}&\rho&{}_\alpha\rho&\alphastarrho\\\hline\rho&Cp_2&-D_+&-D_-\\{}_\alpha\rho&-D_+p_2&D_-&C\\\alphastarrho&-D_-p_2&C&D_+\end{array}}\\[0.5\baselineskip]F_{{}_\alpha\rho}^{\alphastarrho\alphastarrho\rho}=&{\small\begin{array}{c|ccc}&\rho&{}_\alpha\rho&\alphastarrho\\\hline\rho&D_-p_2&Cp_1&D_+\\{}_\alpha\rho&Cp_2&D_+p_1&D_-\\\alphastarrho&D_+p_1p_2&D_-&Cp_1\end{array}}&F_{{}_\alpha\rho}^{\alphastarrho\alphastarrho\alphastarrho}=&{\small\begin{array}{c|ccc}&\rho&{}_\alpha\rho&\alphastarrho\\\hline\rho&D_+p_2&-D_-p_1p_2&-Cp_1p_2\\{}_\alpha\rho&-D_-p_1&C&D_+\\\alphastarrho&-Cp_1&D_+&D_-\end{array}}\\[0.5\baselineskip]F_{\alphastarrho}^{\rho\rho\rho}=&{\small\begin{array}{c|ccc}&\rho&{}_\alpha\rho&\alphastarrho\\\hline\rho&D_-&Cp_1&D_+\\{}_\alpha\rho&-C&-D_+p_1&-D_-\\\alphastarrho&D_+&D_-p_1&C\end{array}}&F_{\alphastarrho}^{\rho\rho{}_\alpha\rho}=&{\small\begin{array}{c|ccc}&\rho&{}_\alpha\rho&\alphastarrho\\\hline\rho&Cp_1&-D_+&D_-\\{}_\alpha\rho&D_+&-D_-p_1&Cp_1\\\alphastarrho&D_-&-Cp_1&D_+p_1\end{array}}\\[0.5\baselineskip]F_{\alphastarrho}^{\rho{}_\alpha\rho{}_\alpha\rho}=&{\small\begin{array}{c|ccc}&\rho&{}_\alpha\rho&\alphastarrho\\\hline\rho&-D_+&-D_-&-C\\{}_\alpha\rho&D_-&C&D_+\\\alphastarrho&C&D_+&D_-\end{array}}&F_{\alphastarrho}^{\rho{}_\alpha\rho\alphastarrho}=&{\small\begin{array}{c|ccc}&\rho&{}_\alpha\rho&\alphastarrho\\\hline\rho&-D_-&-C&-D_+\\{}_\alpha\rho&C&D_+&D_-\\\alphastarrho&D_+&D_-&C\end{array}}\\[0.5\baselineskip]F_{\alphastarrho}^{\rho\alphastarrho\rho}=&{\small\begin{array}{c|ccc}&\rho&{}_\alpha\rho&\alphastarrho\\\hline\rho&D_+&-D_-p_1p_2&Cp_1\\{}_\alpha\rho&D_-&-Cp_1p_2&D_+p_1\\\alphastarrho&Cp_1&-D_+p_2&D_-\end{array}}&F_{\alphastarrho}^{\rho\alphastarrho\alphastarrho}=&{\small\begin{array}{c|ccc}&\rho&{}_\alpha\rho&\alphastarrho\\\hline\rho&-Cp_1p_2&-D_+&-D_-p_1p_2\\{}_\alpha\rho&D_+&D_-p_1p_2&C\\\alphastarrho&D_-&Cp_1p_2&D_+\end{array}}\\[0.5\baselineskip]F_{\alphastarrho}^{{}_\alpha\rho\rho\rho}=&{\small\begin{array}{c|ccc}&\rho&{}_\alpha\rho&\alphastarrho\\\hline\rho&-C&D_+&D_-\\{}_\alpha\rho&-D_+&D_-&C\\\alphastarrho&D_-&-C&-D_+\end{array}}&F_{\alphastarrho}^{{}_\alpha\rho\rho\alphastarrho}=&{\small\begin{array}{c|ccc}&\rho&{}_\alpha\rho&\alphastarrho\\\hline\rho&-D_-p_1&-Cp_1&D_+\\{}_\alpha\rho&Cp_1p_2&D_+p_1p_2&-D_-p_2\\\alphastarrho&D_+&D_-&-Cp_1\end{array}}\\[0.5\baselineskip]F_{\alphastarrho}^{{}_\alpha\rho{}_\alpha\rho\rho}=&{\small\begin{array}{c|ccc}&\rho&{}_\alpha\rho&\alphastarrho\\\hline\rho&D_+p_1&D_-&-Cp_1\\{}_\alpha\rho&-D_-&-Cp_1&D_+\\\alphastarrho&Cp_1&D_+&-D_-p_1\end{array}}&F_{\alphastarrho}^{{}_\alpha\rho{}_\alpha\rho{}_\alpha\rho}=&{\small\begin{array}{c|ccc}&\rho&{}_\alpha\rho&\alphastarrho\\\hline\rho&-D_-p_1&C&D_+\\{}_\alpha\rho&-Cp_1&D_+&D_-\\\alphastarrho&-D_+p_1&D_-&C\end{array}}\\[0.5\baselineskip]F_{\alphastarrho}^{{}_\alpha\rho\alphastarrho{}_\alpha\rho}=&{\small\begin{array}{c|ccc}&\rho&{}_\alpha\rho&\alphastarrho\\\hline\rho&-Cp_1&D_+&D_-\\{}_\alpha\rho&-D_+p_1&D_-&C\\\alphastarrho&-D_-p_1&C&D_+\end{array}}&F_{\alphastarrho}^{{}_\alpha\rho\alphastarrho\alphastarrho}=&{\small\begin{array}{c|ccc}&\rho&{}_\alpha\rho&\alphastarrho\\\hline\rho&-D_+p_2&D_-p_1p_2&Cp_1p_2\\{}_\alpha\rho&-D_-p_1&C&D_+\\\alphastarrho&-Cp_1&D_+&D_-\end{array}}\\[0.5\baselineskip]F_{\alphastarrho}^{\alphastarrho\rho{}_\alpha\rho}=&{\small\begin{array}{c|ccc}&\rho&{}_\alpha\rho&\alphastarrho\\\hline\rho&D_-&Cp_1&D_+\\{}_\alpha\rho&C&D_+p_1&D_-\\\alphastarrho&D_+&D_-p_1&C\end{array}}&F_{\alphastarrho}^{\alphastarrho\rho\alphastarrho}=&{\small\begin{array}{c|ccc}&\rho&{}_\alpha\rho&\alphastarrho\\\hline\rho&C&D_+p_1&D_-\\{}_\alpha\rho&D_+p_1p_2&D_-p_2&Cp_1p_2\\\alphastarrho&D_-&Cp_1&D_+\end{array}}\\[0.5\baselineskip]F_{\alphastarrho}^{\alphastarrho{}_\alpha\rho\rho}=&{\small\begin{array}{c|ccc}&\rho&{}_\alpha\rho&\alphastarrho\\\hline\rho&-D_-&-Cp_1&D_+\\{}_\alpha\rho&-C&-D_+p_1&D_-\\\alphastarrho&D_+p_1&D_-&-Cp_1\end{array}}&F_{\alphastarrho}^{\alphastarrho{}_\alpha\rho\alphastarrho}=&{\small\begin{array}{c|ccc}&\rho&{}_\alpha\rho&\alphastarrho\\\hline\rho&D_+&-D_-p_1&-Cp_1\\{}_\alpha\rho&-D_-p_1&C&D_+\\\alphastarrho&-Cp_1&D_+&D_-\end{array}}\\[0.5\baselineskip]F_{\alphastarrho}^{\alphastarrho\alphastarrho\rho}=&{\small\begin{array}{c|ccc}&\rho&{}_\alpha\rho&\alphastarrho\\\hline\rho&-Cp_1p_2&-D_+p_1&D_-\\{}_\alpha\rho&-D_+p_2&-D_-&Cp_1\\\alphastarrho&-D_-p_1p_2&-Cp_1&D_+\end{array}}&F_{\alphastarrho}^{\alphastarrho\alphastarrho{}_\alpha\rho}=&{\small\begin{array}{c|ccc}&\rho&{}_\alpha\rho&\alphastarrho\\\hline\rho&D_+p_1p_2&-D_-p_1&-Cp_1\\{}_\alpha\rho&-D_-p_2&C&D_+\\\alphastarrho&-Cp_2&D_+&D_-\end{array}}
\end{align}}
\subsection{$4$-dimensional $F$-symbols}
{\allowdisplaybreaks\begin{align}
F_{\rho}^{\rho\rho\rho}=&{\small\begin{array}{c|cccc}&1&\rho&{}_\alpha\rho&\alphastarrho\\\hline1&A&\sqrt{A}&-p_1\sqrt{A}&p_1\sqrt{A}\\\rho&\sqrt{A}&-B&-D_+p_1&D_-p_1\\{}_\alpha\rho&-p_1\sqrt{A}&-D_+p_1&D_-&B\\\alphastarrho&p_1\sqrt{A}&D_-p_1&B&D_+\end{array}}\\[0.5\baselineskip]F_{\rho}^{\rho{}_\alpha\rho{}_\alpha\rho}=&{\small\begin{array}{c|cccc}&\alpha^*&\rho&{}_\alpha\rho&\alphastarrho\\\hline1&A&-p_1\sqrt{A}&\sqrt{A}&\sqrt{A}\\\rho&\sqrt{A}&-D_-p_1&-B&D_+\\{}_\alpha\rho&-p_1\sqrt{A}&-B&-D_+p_1&-D_-p_1\\\alphastarrho&-p_1\sqrt{A}&D_+&-D_-p_1&Bp_1\end{array}}\\[0.5\baselineskip]F_{\rho}^{\rho\alphastarrho\alphastarrho}=&{\small\begin{array}{c|cccc}&\alpha&\rho&{}_\alpha\rho&\alphastarrho\\\hline1&A&p_1\sqrt{A}&-p_1p_2\sqrt{A}&-p_1p_2\sqrt{A}\\\rho&-p_1p_2\sqrt{A}&-D_+p_2&D_-&-B\\{}_\alpha\rho&p_1\sqrt{A}&D_-&Bp_2&-D_+p_2\\\alphastarrho&p_1\sqrt{A}&-B&-D_+p_2&-D_-p_2\end{array}}\\[0.5\baselineskip]F_{\rho}^{{}_\alpha\rho\rho\alphastarrho}=&{\small\begin{array}{c|cccc}&\alpha&\rho&{}_\alpha\rho&\alphastarrho\\\hline\alpha&-A&p_1\sqrt{A}&-\sqrt{A}&p_2\sqrt{A}\\\rho&-p_1\sqrt{A}&-B&-D_+p_1&D_-p_1p_2\\{}_\alpha\rho&p_1p_2\sqrt{A}&-D_+p_2&D_-p_1p_2&Bp_1\\\alphastarrho&\sqrt{A}&-D_-p_1&-B&-D_+p_2\end{array}}\\[0.5\baselineskip]F_{\rho}^{{}_\alpha\rho{}_\alpha\rho\rho}=&{\small\begin{array}{c|cccc}&1&\rho&{}_\alpha\rho&\alphastarrho\\\hline\alpha&A&\sqrt{A}&-p_1\sqrt{A}&-p_1\sqrt{A}\\\rho&-p_1\sqrt{A}&-D_-p_1&-B&D_+\\{}_\alpha\rho&\sqrt{A}&-B&-D_+p_1&-D_-p_1\\\alphastarrho&-p_1\sqrt{A}&-D_+p_1&D_-&-B\end{array}}\\[0.5\baselineskip]F_{\rho}^{{}_\alpha\rho\alphastarrho{}_\alpha\rho}=&{\small\begin{array}{c|cccc}&\alpha^*&\rho&{}_\alpha\rho&\alphastarrho\\\hline\alpha&-A&-p_1\sqrt{A}&-p_1\sqrt{A}&-p_1\sqrt{A}\\\rho&p_1\sqrt{A}&D_+&D_-&-B\\{}_\alpha\rho&p_1\sqrt{A}&D_-&-B&D_+\\\alphastarrho&p_1\sqrt{A}&-B&D_+&D_-\end{array}}\\[0.5\baselineskip]F_{\rho}^{\alphastarrho\rho{}_\alpha\rho}=&{\small\begin{array}{c|cccc}&\alpha^*&\rho&{}_\alpha\rho&\alphastarrho\\\hline\alpha^*&-A&p_1\sqrt{A}&-p_1\sqrt{A}&-\sqrt{A}\\\rho&-p_1\sqrt{A}&-B&-D_+&-D_-p_1\\{}_\alpha\rho&\sqrt{A}&-D_+p_1&D_-p_1&-B\\\alphastarrho&p_1p_2\sqrt{A}&-D_-p_2&-Bp_2&D_+p_1p_2\end{array}}\\[0.5\baselineskip]F_{\rho}^{\alphastarrho{}_\alpha\rho\alphastarrho}=&{\small\begin{array}{c|cccc}&\alpha&\rho&{}_\alpha\rho&\alphastarrho\\\hline\alpha^*&A&-p_1\sqrt{A}&-p_1\sqrt{A}&-p_2\sqrt{A}\\\rho&-p_1\sqrt{A}&D_-&-B&D_+p_1p_2\\{}_\alpha\rho&-p_1\sqrt{A}&-B&D_+&D_-p_1p_2\\\alphastarrho&-p_2\sqrt{A}&D_+p_1p_2&D_-p_1p_2&-B\end{array}}\\[0.5\baselineskip]F_{\rho}^{\alphastarrho\alphastarrho\rho}=&{\small\begin{array}{c|cccc}&1&\rho&{}_\alpha\rho&\alphastarrho\\\hline\alpha^*&A&-p_1p_2\sqrt{A}&p_1\sqrt{A}&p_1\sqrt{A}\\\rho&p_1\sqrt{A}&-D_+p_2&D_-&-B\\{}_\alpha\rho&-p_1\sqrt{A}&D_-p_2&B&-D_+\\\alphastarrho&-p_1p_2\sqrt{A}&-B&-D_+p_2&-D_-p_2\end{array}}\\[0.5\baselineskip]F_{{}_\alpha\rho}^{\rho\rho{}_\alpha\rho}=&{\small\begin{array}{c|cccc}&1&\rho&{}_\alpha\rho&\alphastarrho\\\hline\alpha^*&A&\sqrt{A}&-p_1\sqrt{A}&p_1\sqrt{A}\\\rho&-p_1\sqrt{A}&-D_-p_1&-B&-D_+\\{}_\alpha\rho&\sqrt{A}&-B&-D_+p_1&D_-p_1\\\alphastarrho&p_1p_2\sqrt{A}&D_+p_1p_2&-D_-p_2&-Bp_2\end{array}}\\[0.5\baselineskip]F_{{}_\alpha\rho}^{\rho{}_\alpha\rho\alphastarrho}=&{\small\begin{array}{c|cccc}&\alpha^*&\rho&{}_\alpha\rho&\alphastarrho\\\hline\alpha^*&Ap_1&p_2\sqrt{A}&p_1\sqrt{A}&p_1\sqrt{A}\\\rho&\sqrt{A}&D_+p_1p_2&D_-&-B\\{}_\alpha\rho&\sqrt{A}&D_-p_1p_2&-B&D_+\\\alphastarrho&p_1p_2\sqrt{A}&-B&D_+p_1p_2&D_-p_1p_2\end{array}}\\[0.5\baselineskip]F_{{}_\alpha\rho}^{\rho\alphastarrho\rho}=&{\small\begin{array}{c|cccc}&\alpha&\rho&{}_\alpha\rho&\alphastarrho\\\hline\alpha^*&-A&-p_1\sqrt{A}&p_1p_2\sqrt{A}&p_1\sqrt{A}\\\rho&p_1\sqrt{A}&-B&-D_+p_2&-D_-\\{}_\alpha\rho&-p_1\sqrt{A}&-D_+&D_-p_2&-B\\\alphastarrho&-p_1p_2\sqrt{A}&-D_-p_2&-B&D_+p_2\end{array}}\\[0.5\baselineskip]F_{{}_\alpha\rho}^{{}_\alpha\rho\rho\rho}=&{\small\begin{array}{c|cccc}&\alpha&\rho&{}_\alpha\rho&\alphastarrho\\\hline1&A&-p_1\sqrt{A}&\sqrt{A}&-\sqrt{A}\\\rho&\sqrt{A}&-D_-p_1&-B&-D_+\\{}_\alpha\rho&-p_1\sqrt{A}&-B&-D_+p_1&D_-p_1\\\alphastarrho&p_1\sqrt{A}&-D_+&D_-p_1&Bp_1\end{array}}\\[0.5\baselineskip]F_{{}_\alpha\rho}^{{}_\alpha\rho{}_\alpha\rho{}_\alpha\rho}=&{\small\begin{array}{c|cccc}&1&\rho&{}_\alpha\rho&\alphastarrho\\\hline1&A&\sqrt{A}&-p_1\sqrt{A}&-p_1\sqrt{A}\\\rho&\sqrt{A}&D_+&-D_-p_1&Bp_1\\{}_\alpha\rho&-p_1\sqrt{A}&-D_-p_1&-B&D_+\\\alphastarrho&-p_1\sqrt{A}&Bp_1&D_+&D_-\end{array}}\\[0.5\baselineskip]F_{{}_\alpha\rho}^{{}_\alpha\rho\alphastarrho\alphastarrho}=&{\small\begin{array}{c|cccc}&\alpha^*&\rho&{}_\alpha\rho&\alphastarrho\\\hline1&A&-p_1p_2\sqrt{A}&p_1\sqrt{A}&p_1\sqrt{A}\\\rho&-p_1p_2\sqrt{A}&-B&-D_+p_2&-D_-p_2\\{}_\alpha\rho&p_1\sqrt{A}&-D_+p_2&D_-&-B\\\alphastarrho&p_1\sqrt{A}&-D_-p_2&-B&D_+\end{array}}\\[0.5\baselineskip]F_{{}_\alpha\rho}^{\alphastarrho\rho\alphastarrho}=&{\small\begin{array}{c|cccc}&\alpha^*&\rho&{}_\alpha\rho&\alphastarrho\\\hline\alpha&A&p_1p_2\sqrt{A}&p_1\sqrt{A}&\sqrt{A}\\\rho&p_1\sqrt{A}&D_-p_2&-B&D_+p_1\\{}_\alpha\rho&p_1p_2\sqrt{A}&-B&D_+p_2&D_-p_1p_2\\\alphastarrho&\sqrt{A}&D_+p_1p_2&D_-p_1&-B\end{array}}\\[0.5\baselineskip]F_{{}_\alpha\rho}^{\alphastarrho{}_\alpha\rho\rho}=&{\small\begin{array}{c|cccc}&\alpha&\rho&{}_\alpha\rho&\alphastarrho\\\hline\alpha&Ap_1&-\sqrt{A}&-\sqrt{A}&-p_1\sqrt{A}\\\rho&-\sqrt{A}&D_+p_1&D_-p_1&-B\\{}_\alpha\rho&-p_1\sqrt{A}&D_-&-B&D_+p_1\\\alphastarrho&\sqrt{A}&Bp_1&-D_+p_1&-D_-\end{array}}\\[0.5\baselineskip]F_{{}_\alpha\rho}^{\alphastarrho\alphastarrho{}_\alpha\rho}=&{\small\begin{array}{c|cccc}&1&\rho&{}_\alpha\rho&\alphastarrho\\\hline\alpha&A&-p_1p_2\sqrt{A}&p_1\sqrt{A}&p_1\sqrt{A}\\\rho&-p_1\sqrt{A}&-Bp_2&-D_+&-D_-\\{}_\alpha\rho&p_1\sqrt{A}&-D_+p_2&D_-&-B\\\alphastarrho&p_1\sqrt{A}&-D_-p_2&-B&D_+\end{array}}\\[0.5\baselineskip]F_{\alphastarrho}^{\rho\rho\alphastarrho}=&{\small\begin{array}{c|cccc}&1&\rho&{}_\alpha\rho&\alphastarrho\\\hline\alpha&A&\sqrt{A}&-p_1\sqrt{A}&p_1\sqrt{A}\\\rho&p_1\sqrt{A}&D_+p_1&-D_-&-B\\{}_\alpha\rho&p_1p_2\sqrt{A}&D_-p_1p_2&Bp_2&D_+p_2\\\alphastarrho&\sqrt{A}&-B&-D_+p_1&D_-p_1\end{array}}\\[0.5\baselineskip]F_{\alphastarrho}^{\rho{}_\alpha\rho\rho}=&{\small\begin{array}{c|cccc}&\alpha^*&\rho&{}_\alpha\rho&\alphastarrho\\\hline\alpha&-A&-p_1\sqrt{A}&p_1\sqrt{A}&-\sqrt{A}\\\rho&p_1\sqrt{A}&-B&-D_+&D_-p_1\\{}_\alpha\rho&\sqrt{A}&D_+p_1&-D_-p_1&-B\\\alphastarrho&-p_1\sqrt{A}&-D_-&-B&-D_+p_1\end{array}}\\[0.5\baselineskip]F_{\alphastarrho}^{\rho\alphastarrho{}_\alpha\rho}=&{\small\begin{array}{c|cccc}&\alpha&\rho&{}_\alpha\rho&\alphastarrho\\\hline\alpha&-Ap_1&p_1\sqrt{A}&p_2\sqrt{A}&p_1\sqrt{A}\\\rho&\sqrt{A}&-D_-&Bp_1p_2&-D_+\\{}_\alpha\rho&-\sqrt{A}&-B&D_+p_1p_2&D_-\\\alphastarrho&-\sqrt{A}&D_+&D_-p_1p_2&-B\end{array}}\\[0.5\baselineskip]F_{\alphastarrho}^{{}_\alpha\rho\rho{}_\alpha\rho}=&{\small\begin{array}{c|cccc}&\alpha&\rho&{}_\alpha\rho&\alphastarrho\\\hline\alpha^*&-A&-\sqrt{A}&-\sqrt{A}&p_1\sqrt{A}\\\rho&-p_1\sqrt{A}&-D_+p_1&-D_-p_1&-B\\{}_\alpha\rho&\sqrt{A}&D_-&-B&-D_+p_1\\\alphastarrho&\sqrt{A}&-B&D_+&-D_-p_1\end{array}}\\[0.5\baselineskip]F_{\alphastarrho}^{{}_\alpha\rho{}_\alpha\rho\alphastarrho}=&{\small\begin{array}{c|cccc}&1&\rho&{}_\alpha\rho&\alphastarrho\\\hline\alpha^*&A&\sqrt{A}&-p_1\sqrt{A}&-p_1\sqrt{A}\\\rho&-p_1\sqrt{A}&Bp_1&D_+&D_-\\{}_\alpha\rho&-p_1\sqrt{A}&-D_+p_1&D_-&-B\\\alphastarrho&-p_1\sqrt{A}&-D_-p_1&-B&D_+\end{array}}\\[0.5\baselineskip]F_{\alphastarrho}^{{}_\alpha\rho\alphastarrho\rho}=&{\small\begin{array}{c|cccc}&\alpha^*&\rho&{}_\alpha\rho&\alphastarrho\\\hline\alpha^*&-Ap_1&\sqrt{A}&p_1\sqrt{A}&-\sqrt{A}\\\rho&-\sqrt{A}&D_-p_1&-B&-D_+p_1\\{}_\alpha\rho&\sqrt{A}&Bp_1&-D_+&D_-p_1\\\alphastarrho&p_1\sqrt{A}&-D_+&-D_-p_1&-B\end{array}}\\[0.5\baselineskip]F_{\alphastarrho}^{\alphastarrho\rho\rho}=&{\small\begin{array}{c|cccc}&\alpha^*&\rho&{}_\alpha\rho&\alphastarrho\\\hline1&A&p_1\sqrt{A}&-\sqrt{A}&\sqrt{A}\\\rho&\sqrt{A}&D_+p_1&-D_-&-B\\{}_\alpha\rho&-p_1\sqrt{A}&-D_-&-Bp_1&-D_+p_1\\\alphastarrho&p_1\sqrt{A}&-B&-D_+p_1&D_-p_1\end{array}}\\[0.5\baselineskip]F_{\alphastarrho}^{\alphastarrho{}_\alpha\rho{}_\alpha\rho}=&{\small\begin{array}{c|cccc}&\alpha&\rho&{}_\alpha\rho&\alphastarrho\\\hline1&A&\sqrt{A}&-p_1\sqrt{A}&-p_1\sqrt{A}\\\rho&\sqrt{A}&-B&-D_+p_1&-D_-p_1\\{}_\alpha\rho&-p_1\sqrt{A}&-D_+p_1&D_-&-B\\\alphastarrho&-p_1\sqrt{A}&-D_-p_1&-B&D_+\end{array}}\\[0.5\baselineskip]F_{\alphastarrho}^{\alphastarrho\alphastarrho\alphastarrho}=&{\small\begin{array}{c|cccc}&1&\rho&{}_\alpha\rho&\alphastarrho\\\hline1&A&-p_1p_2\sqrt{A}&p_1\sqrt{A}&p_1\sqrt{A}\\\rho&-p_1p_2\sqrt{A}&D_-&Bp_2&-D_+p_2\\{}_\alpha\rho&p_1\sqrt{A}&Bp_2&D_+&D_-\\\alphastarrho&p_1\sqrt{A}&-D_+p_2&D_-&-B\end{array}}
\end{align}}

\end{document}